\definecolor{shadecolor}{gray}{0.875}
\newtheorem{thrm}{Theorem}[section]
\newtheorem{lem}[thrm]{Lemma}
\newtheorem{cor}[thrm]{Corollary}
\newtheorem{prop}[thrm]{Proposition}
\newtheorem{conj}[thrm]{Conjecture}
\newtheorem*{thrma}{Theorem}
\theoremstyle{definition}
\newtheorem{defn}[thrm]{Definition}
\newtheorem{ex}[thrm]{Example}
\newtheorem{rmk}[thrm]{Remark}
\newtheorem{ques}[thrm]{Question}
\newtheorem*{rmka}{Remark}
\DeclareMathOperator{\st}{\bigm\vert}
\DeclareMathOperator{\Sym}{Sym}
\DeclareMathOperator{\Eff}{\overline{Eff}}
\DeclareMathOperator{\Nef}{{Nef}}
\DeclareMathOperator{\mum}{\overline{\mu}_{\rm min}}
\let\cal\mathcal
\let\frak\mathfrak
\let\bb\mathbb
\let\scr\mathscr
\newcommand{\factor}[2]{\left. \raise 1pt\hbox{\ensuremath{#1}} \right/
        \hskip -2pt\raise -3pt\hbox{\ensuremath{#2}}}
\numberwithin{equation}{thrm}
\begin{document}

\title[New constructions of nef classes on self-products of curves]{New constructions of nef classes\\on self-products of curves}
\author{Mihai Fulger}
\address{Department of Mathematics, University of Connecticut, Storrs, CT 06269-1009, USA}
\address{Institute of Mathematics of the Romanian Academy, P. O. Box 1-764, RO-014700,
Bucharest, Romania}
\email{mihai.fulger@uconn.edu}
\thanks{The first author was partially supported by the Simons Foundation
Collaboration Grant 579353.}
\author{Takumi Murayama}
\address{Department of Mathematics\\Princeton University\\Princeton, NJ
08544-1000\\USA}
\email{takumim@math.princeton.edu}
\thanks{The second author was partially supported by the National Science
Foundation under Grant Nos.\ DMS-1701622 and DMS-1902616.}

\begin{abstract}
	We study the nef cone of self-products of a curve. When the curve is very general of genus $g>2$, we construct a nontrivial class of self-intersection 0 on the boundary of the nef cone. Up to symmetry, this is the only known nontrivial boundary example that exists for all $g > 2$. When the curve is general, we identify nef classes that improve on known examples for arbitrary curves. We also consider self-products of more than two copies of the curve. 
\end{abstract}

\maketitle


\section{Introduction}
The closure of the ample cone of a projective variety $X$ is the nef cone $\Nef(X)$.
It is a fundamental invariant that controls morphisms from $X$ to other projective varieties,
in particular projective embeddings of $X$. It is important to compute this cone in specific examples;
however, this is a difficult problem already on surfaces. 
Perhaps the most famous open question here is the following:

\begin{conj}[Nagata; see {\cite[Conjecture on p.\ 772]{NagataConj}}]\label{conj:nagata}Let $\pi\colon X\to\bb P^2$ be the blow-up of $n\geq 10$ very general points in $\bb P^2$ with exceptional divisors $E_1,\ldots,E_n$. Let $H\subset\bb P^2$ be any line. Then
	\[\pi^*(\sqrt nH)-E_1-\cdots-E_n\in\Nef(X).\]
\end{conj}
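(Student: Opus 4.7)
The plan is to show that every irreducible curve $C\subset X$ pairs nonnegatively with the family $D_t:=\pi^*((\sqrt n+t)H)-E_1-\cdots-E_n$ for $t>0$, since then letting $t\to 0^+$ would place $D=\pi^*(\sqrt n H)-\sum_{i=1}^n E_i$ in $\Nef(X)$. Writing the numerical class of such a $C$ as $\pi^*(dH)-\sum_{i=1}^n m_iE_i$ with $d=\deg\pi(C)$ and $m_i=\mult_{p_i}\pi(C)\geq 0$, the required inequality $C\cdot D\geq 0$ becomes
\[
	d\sqrt n\;\geq\;m_1+\cdots+m_n.
\]

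The first reduction is symmetrization. Because the points $p_1,\ldots,p_n$ are very general, the monodromy of their parameter space acts as the full symmetric group $S_n$ and preserves $\Nef(X)$. Averaging a fixed $C$ over this $S_n$-orbit replaces the multiplicities by their common mean $m$, so the problem collapses to the statement that every plane curve of degree $d$ passing through $n$ very general points with multiplicity at least $m$ satisfies $d\geq\sqrt n\,m$. Equivalently, one must show that the multi-point Seshadri constant $\varepsilon(\mathcal O_{\bb P^2}(1);p_1,\ldots,p_n)$ equals $1/\sqrt n$.

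The main obstacle is that this last inequality is exactly Nagata's conjecture, and it remains open for every non-square $n\geq 10$. Nagata's own argument handles $n=k^2\geq 16$ via a one-parameter degeneration of the $n$ points together with the Hodge index theorem on a product surface; subsequent work has produced bounds of the form $d/m\geq\sqrt n-o(1)$ as $m\to\infty$, or sharp bounds in restricted ranges of $n$ and $m$, but none reach $\sqrt n$ unconditionally. Every known degeneration either breaks the monodromy symmetry essential to the very-general hypothesis, or leaves a residual error term in the Hodge-index bound which does not vanish in the limit. Closing the gap would seem to require either a genuinely new positivity input --- for instance a uniform lower bound on multiplier ideals or log-canonical thresholds at $n$ very general points --- or a proof of the stronger SHGH conjecture, of which Nagata's conjecture is a consequence. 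Since the statement is recorded in the introduction purely as motivation rather than as a target of this paper, I do not expect to resolve it here; the best one can do is record the reduction above and identify the obstacle.
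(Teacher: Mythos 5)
This statement is Nagata's conjecture, which the paper records as an open \emph{conjecture} for motivation only; it neither proves it nor claims to, so there is no proof of record to compare against. Your assessment is therefore exactly right: the problem is open for every non-square $n\geq 10$, and no argument in this paper (or elsewhere) resolves it. The symmetrization reduction you give is correct and standard --- for very general points one may sum a putative violating curve over the $S_n$-orbit of its multiplicity vector to reduce to the uniform-multiplicity case, i.e.\ to the statement that the $n$-point Seshadri constant of $\mathcal O_{\mathbb P^2}(1)$ equals $1/\sqrt n$ --- and your identification of the obstruction (no known degeneration or Hodge-index argument closes the gap to $\sqrt n$ when $n$ is not a perfect square) matches the state of the art. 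Declining to manufacture a proof here is the correct conclusion.
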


\noindent Recall that a property is very general on a variety if it holds outside a countable union of Zariski closed proper subsets. 
Conjecture \ref{conj:nagata} is a particular case of the SHGH conjecture. Note that the divisor in the Nagata conjecture has self-intersection 0, hence if it is nef, then it is on the boundary of the nef cone.

Another interesting class of surfaces is self-products of curves.
Recall the following open problem:

\begin{conj}[see {\cite[Remark 1.5.10]{laz04}}]\label{conj:prodcurvesintro}
  Let $C$ be a smooth projective curve of genus $g$ over $\bb C$.
  Denote by $f_1$ and $f_2$ (resp.\ $\delta$) the classes of the fibers of the
  projections (resp.\ the class of the diagonal $\Delta$) in $C \times C$.
  Then, we have
  \[
    (1+ \sqrt{g})(f_1+f_2) - \delta \in \operatorname{Nef}(C \times
    C)
  \]
  if $g$ is sufficiently large and $C$ has very general moduli.
\end{conj}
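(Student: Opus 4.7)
My strategy is to verify the nef criterion directly against every irreducible curve. Since $C$ has very general moduli, $\mathrm{End}(\mathrm{Jac}(C))=\mathbb{Z}$ and $f_1,f_2,\delta$ form a basis of $N^1(C\times C)_{\mathbb{R}}$. For an irreducible curve $D\neq\Delta$ I would write its numerical class as $(d_2-c)f_1+(d_1-c)f_2+c\delta$, where $d_i:=D\cdot f_i\geq 0$ is the degree of the $i$-th projection restricted to $D$; a short calculation then gives
\[
D^2=2d_1d_2-2gc^2,\qquad D\cdot\delta=d_1+d_2-2gc,\qquad \bigl((1+\sqrt{g})(f_1+f_2)-\delta\bigr)\cdot D=\sqrt{g}\,(d_1+d_2)+2gc.
\]
By Kleiman's criterion the conjecture reduces to the uniform lower bound $c\geq-(d_1+d_2)/(2\sqrt{g})$ for every such $D$ (the case $D=\Delta$, with $c=1$, being a trivial numerical check). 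I would proceed by contradiction, assuming $c<-(d_1+d_2)/(2\sqrt{g})$.

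The plan is to confine $c$ between two quantitative bounds. For the auxiliary class $H_\lambda:=f_1+f_2+\lambda\delta$ one computes $H_\lambda^2=2+4\lambda+(2-2g)\lambda^2$, which is positive exactly for $\lambda\in(-1/(\sqrt{g}+1),\,1/(\sqrt{g}-1))$. Choosing $\lambda_*:=-(d_1+d_2)/(d_1+d_2-2gc)$ makes $D\cdot H_{\lambda_*}=0$, and the violating hypothesis on $c$ is precisely what forces $H_{\lambda_*}^2>0$, so Hodge index yields $D^2\leq 0$, giving $c^2\geq d_1d_2/g$. Complementarily, applying Riemann--Hurwitz to each projection $\pi_i\colon\widetilde D\to C$ from the normalization gives $g_{\widetilde D}\geq d_i(g-1)+1$, and combining with the adjunction formula $p_a(D)=1+d_1d_2-gc^2+(g-1)(d_1+d_2)\geq g_{\widetilde D}$ yields
\[
gc^2\;\leq\;\min(d_1,d_2)\bigl(\max(d_1,d_2)+g-1\bigr).
\]

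The main obstacle is that these two bounds do not by themselves contradict the violating hypothesis $c^2>(d_1+d_2)^2/(4g)$. After simplification they only rule out curves with $(d_2-d_1)^2\geq 4\min(d_1,d_2)(g-1)$, and in particular produce no contradiction when $d_1=d_2$, which is precisely the symmetric regime where the extremal examples are expected to live. Closing this gap seems to require a genuinely new ingredient: a further effective class on $C\times C$ that pairs more negatively with $\delta$ than the obvious candidates (the pullback of the theta divisor under the difference map $C\times C\to\mathrm{Jac}(C)$ only gives the weaker bound $c\geq-(d_1+d_2)/(2g)$), a Bogomolov--Gieseker inequality for a vector bundle associated to $D$, or an exploitation of ``$g$ sufficiently large'' via degeneration to special curves. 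I expect the new constructions in the present paper to improve on the Riemann--Hurwitz bound, but to my knowledge no approach reaches the exact constant $1+\sqrt{g}$ in full generality, which is why this conjecture remains open.
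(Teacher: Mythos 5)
This statement is Conjecture~\ref{conj:prodcurvesintro}: it is an \emph{open problem}, and the paper does not prove it (nor claim to). The paper only records it, notes that Ciliberto--Kouvidakis and Ross showed the Nagata conjecture implies it, and cites the best known partial results, namely Kouvidakis's bound $\bigl(1+\frac{g}{\lfloor\sqrt g\rfloor}\bigr)(f_1+f_2)-\delta$ nef (so the conjecture holds when $g$ is a perfect square) and Ross's $\bigl(1+\sqrt{g+1}\bigr)(f_1+f_2)-\delta$. The paper's own new theorems concern the non-symmetric Question~\ref{ques:main} (e.g.\ $a=2$ in Theorem~\ref{thrm:verygeneral2}), not the symmetric class $(1+\sqrt g)(f_1+f_2)-\delta$. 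So your self-assessment at the end is exactly right: there is nothing to compare your argument against, and no complete proof exists in the literature.

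Your partial computations are correct as far as they go: the intersection numbers, the reduction of nefness to $c\geq -(d_1+d_2)/(2\sqrt g)$, the Hodge-index step giving $c^2\geq d_1d_2/g$ under the violating hypothesis, and the adjunction/Riemann--Hurwitz bound $gc^2\leq \min(d_1,d_2)\bigl(\max(d_1,d_2)+g-1\bigr)$ all check out, and you correctly observe that they cannot close in the symmetric regime $d_1=d_2$. This is the expected outcome: arguments using only the numerical class and the geometric genus of $D$ are exactly what underlie the Vojta--Rabindranath bounds \eqref{eq:vojtarabindranath}, which are known to fall short of $1+\sqrt g$. One small correction: your parenthetical about the theta pullback $(g-1)(f_1+f_2)+\delta$ is backwards --- pairing $D$ against a nef class $\alpha(f_1+f_2)+\gamma\delta$ with $\gamma>0$ yields an \emph{upper} bound $c\leq(\alpha+\gamma)(d_1+d_2)/(2g\gamma)$, not a lower bound of the form $c\geq -(d_1+d_2)/(2g)$; to get a lower bound on $c$ this way you would need a nef class with $\gamma<0$, which is precisely what the conjecture asks for, so that route is circular. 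If you want to see how far degeneration techniques actually get, the relevant mechanisms in this paper are the degeneration to simple branched covers of $\bb P^1$ (Corollary~\ref{cor:genKouvidakis}, recovering Kouvidakis) and the degeneration to a $g$-nodal rational curve reducing the problem to blow-ups of $\bb P^1\times\bb P^1$ (Theorem~\ref{thrm:verygeneral2}); the latter is how the Nagata-type reformulation arises.
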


The self-intersection of $(1+\sqrt{g})(f_1+f_2) - \delta$ is $0$, just like in Conjecture \ref{conj:nagata}.
In fact, Ciliberto--Kouvidakis \cite{cknagata} and Ross \cite{Ross} prove that the Nagata conjecture implies Conjecture \ref{conj:prodcurvesintro}.
In the direction of Conjecture \ref{conj:prodcurvesintro}, 
Kouvidakis \cite[Theorem 2]{kouvidakis} shows that
\[\biggl(1+\frac g{\lfloor \sqrt g\rfloor}\biggr)(f_1+f_2)-\delta\in\Nef(C\times C).\]
In particular, the conjecture holds when $g$ is a perfect square.
An improvement when $g$ is not a perfect square is offered by \cite[(1.9)]{Ross} who uses work of \cite{SSS} to
prove that $\bigl(1+\sqrt{g+1}\bigr)(f_1+f_2)-\delta$ is nef.

It also makes sense to consider the non-symmetric divisors with zero self-intersection and ask:  

\begin{ques}\label{ques:main}
If $C$ is a very general curve of (large) genus $g$, and if $a>1$, is the class $a\:\!f_1+\bigl(1+\frac g{a-1}\bigr)f_2-\delta$ nef on $C\times C$?
\end{ques}


Just like with Conjecture \ref{conj:prodcurvesintro}, there are special curves for which the analogous question has a negative answer. For example, if $C$ is hyperelliptic and $a=2$, then $2f_1+(1+g)f_2-\delta$ is not nef.  
On arbitrary curves, the best known result is due to Rabindranath
\cite[Proposition 3.2]{ashwath}. 
He adapts an idea of Vojta \cite{vojta} to prove that
\begin{equation}\label{eq:vojtarabindranath}
  af_1+\biggl(1+\frac{g}{a-1}+(g-1)(a-1)\biggr)f_2-\delta\in\Nef(C\times C).
\end{equation}

\begin{figure}[t]
	\centering
	\includegraphics[scale=0.7]{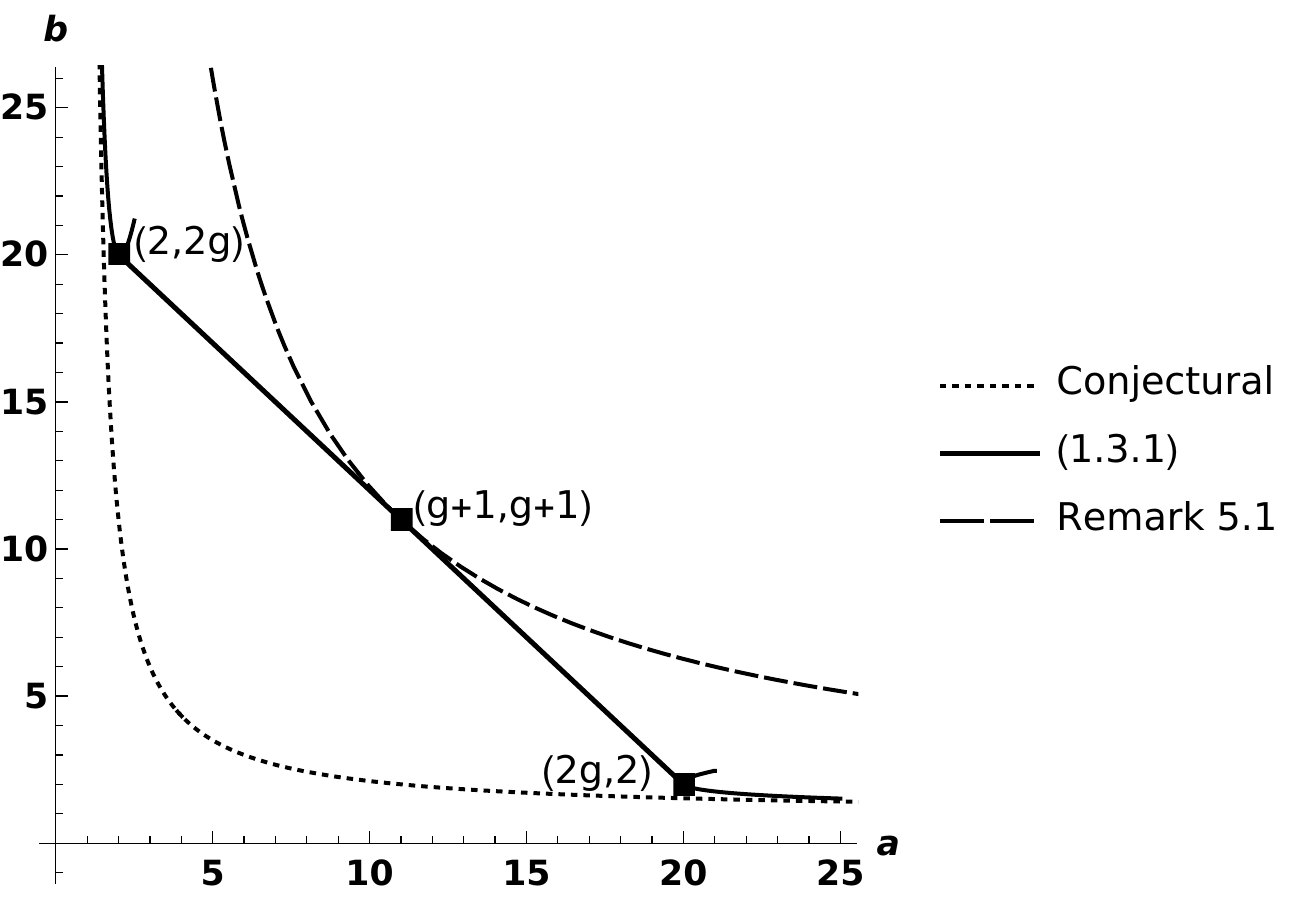}
	\caption{Nef classes on $C\times C$ for $g = 10$ and arbitrary $C$}
	{\footnotesize The classes $af_1+bf_2-\delta$ are represented by the points $(a,b)$. The outside curve is the conjectural nef boundary for very general curves in Question \ref{ques:main}. 
	Inside, on the left is the graph of $b=1+\frac g{a-1}+(a-1)(g-1)$ from \eqref{eq:vojtarabindranath}. On the right is its reflection. The dotted curve in the middle is $b=1+\frac{g^2}{a-1}$ from Remark \ref{rmk:delv}. The line segment bounds the convex hull. It is tangent to the curves at the specified points.}
	\label{fig:nefconeArbitraryg10}
\end{figure}

\noindent See Figure \ref{fig:nefconeArbitraryg10}. The line segment joining $(2,2g)$ and $(2g,2)$ is optimal for hyperelliptic curves.

\medskip

Our sharpest result answers Question \ref{ques:main} in the affirmative for
$a=2$. Up to symmetry, this is the only settled case of Question \ref{ques:main}
that we know of, other than Kouvidakis's when $g$ is a perfect square.

\begin{thrma}[see Theorem \ref{thrm:verygeneral2}]
	Let $C$ be a \emph{very general} smooth projective curve of genus $g\neq 2$.
	Then
	\begin{equation}\label{eq:verygenerala=2}
		2f_1+(1+g)f_2-\delta\in\Nef(C\times C).
	\end{equation}
\end{thrma}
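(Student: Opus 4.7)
The plan is to verify the Nakai criterion: since $D^2 = 0$ for $D := 2f_1 + (1+g)f_2 - \delta$, it suffices to show $D \cdot \Gamma \geq 0$ for every irreducible curve $\Gamma \subset C \times C$. The fibers of the two projections and the diagonal itself are handled directly: $D \cdot f_1 = g$, $D \cdot f_2 = 1$, and $D \cdot \Delta = 1 + 3g$. For any other irreducible $\Gamma$, both projections restrict to surjective morphisms of degrees $a := \Gamma \cdot f_1$ and $b := \Gamma \cdot f_2$ with $a, b \geq 1$.

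Since $C$ is very general, $\mathrm{NS}(C \times C)_{\bb Q} = \bb Q \langle f_1, f_2, \delta\rangle$, so one can write $[\Gamma] = (b-\gamma)f_1 + (a-\gamma)f_2 + \gamma\delta$ for a unique integer $\gamma$. A routine intersection calculation gives
\[
D \cdot \Gamma = a + gb + 2g\gamma, \qquad \Gamma^2 = 2ab - 2g\gamma^2.
\]
When $\gamma \geq 0$, the conclusion is immediate; when $\Gamma^2 \geq 0$, the inequality $g\gamma^2 \leq ab$ combined with AM--GM gives $a + gb \geq 2\sqrt{gab} \geq 2g|\gamma|$, and again $D \cdot \Gamma \geq 0$. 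The remaining regime is $\gamma \leq -1$ with $\Gamma^2 < 0$, in which case $\Gamma$ is rigid and uniquely determined in its numerical class.

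In this regime I first eliminate the bidegrees $(1,b)$ and $(a,1)$: such a $\Gamma$ would be the graph of a self-morphism of $C$ of degree $\max(a,b)$, which by Riemann--Hurwitz must be an automorphism for $g \geq 2$; very generality forces $\Gamma = \Delta$ (with $\gamma = 1$), contradicting $\gamma \leq -1$. Hence $a, b \geq 2$. For the remaining correspondences, the strategy is to analyze the morphism $\phi_\Gamma \colon C \to \mathrm{Pic}^a(C)$ sending $x$ to the linear equivalence class of $\pi_2(\Gamma \cap \pi_1^{-1}(x))$. Because $\mathrm{End}(J(C)) = \bb Z$ for very general $C$, every such $\phi_\Gamma$ factors as $[m] \circ \mathrm{AJ} + t$ for some $m \in \bb Z$ and $t \in J(C)$: either $m = 0$, in which case $\Gamma$ arises from a complete linear system on $C$ (to which Brill--Noether theory applies), or $m \neq 0$, in which case $\Gamma$ is tightly constrained by the Abel--Jacobi factorization.

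The main obstacle is this structural analysis. The model case is $\gamma = -1$: a $g^1_d$ pencil $\phi$ on $C$ produces $\Gamma_\phi' := \Gamma_\phi - \Delta$ of bidegree $(d-1, d-1)$, where Brill--Noether for very general $C$ forces $d \geq \lceil(g+2)/2\rceil$, and a direct computation gives $D \cdot \Gamma_\phi' = (g+1)(d-1) - 2g$, which is nonnegative precisely when $g \geq 3$. This explains the exclusion of $g = 2$: every genus-$2$ curve is hyperelliptic, and its $g^1_2$ produces a $(1,1)$-correspondence with $D \cdot \Gamma = 1 - g < 0$. Extending this analysis to $\gamma \leq -2$, to asymmetric bidegrees, and to higher-dimensional linear systems, together with the $m \neq 0$ Abel--Jacobi case, is the principal technical work I anticipate in the full proof.
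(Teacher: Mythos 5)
Your setup is a genuinely different strategy from the paper's, and the first half of it is correct: for very general $C$ the class of an irreducible curve $\Gamma$ is $(b-\gamma)f_1+(a-\gamma)f_2+\gamma\delta$, and your formulas $D\cdot\Gamma=a+gb+2g\gamma$ and $\Gamma^2=2ab-2g\gamma^2$ check out, as does the Hodge-index/AM--GM step disposing of all $\Gamma$ with $\gamma\geq 0$ or $\Gamma^2\geq 0$, and the Riemann--Hurwitz elimination of bidegree $(1,b)$. But the proof stops exactly where the difficulty begins. The entire content of the theorem is the remaining case: irreducible curves with $\gamma\leq -1$ and $\Gamma^2<0$, i.e.\ correspondences of valence $\nu=-\gamma\geq 1$ on a very general curve. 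You compute one model instance (the curve $C\times_{\mathbb{P}^1}C-\Delta$ attached to a pencil, with $\gamma=-1$ and $a=b=d-1$) and then explicitly defer ``$\gamma\leq -2$, asymmetric bidegrees, higher-dimensional linear systems, and the $m\neq 0$ case'' as anticipated work. That is not a small completion: bounding $a+gb$ from below by $2g|\gamma|$ for every irreducible negative correspondence of valence $|\gamma|$ is precisely the kind of classification whose full resolution (for general $a$ in place of $2$) would settle Question \ref{ques:main} and Conjecture \ref{conj:prodcurvesintro}, both open. There is also an internal inconsistency in your case split: since $\Delta$ induces the identity on $J(C)$ and the fibers induce $0$, the integer $m$ in your Abel--Jacobi factorization is $\gamma$ itself, so in the regime $\gamma\leq-1$ the branch ``$m=0$, $\Gamma$ arises from a complete linear system'' never occurs, and everything lands in the branch you have not analyzed. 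Even for valence $1$ you would need a Brill--Noether argument covering all $g^r_{b+1}$ with $r\geq 1$ (not just pencils), and for valence $\nu\geq 2$ you would need lower bounds on $a$ and $b$ in terms of $\nu$ and $g$ for divisors of the form $\Gamma(x)+\nu x$ moving in a linear series; none of this is supplied.

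For contrast, the paper avoids classifying negative curves altogether. It degenerates $C$ to a rational curve with $g$ simple nodes in general position (following Ross), so that nefness on the very general fiber follows from nefness of an explicit Cartier divisor on the central fiber of a family; after normalizing, this becomes the statement that $\pi^*(f_1+gf_2)-E$ is nef on the blow-up of $\mathbb{P}^1\times\mathbb{P}^1$ at $g$ general symmetric pairs (Proposition \ref{prop:p1xp1optimal}), which is proved by exhibiting, via the symmetric interpolation Lemma \ref{lem:symmetricinterpolation}, an irreducible curve in that class with self-intersection zero. If you want to pursue your direct approach, the missing piece is a complete treatment of irreducible correspondences of positive valence on a very general curve; as written, the argument does not prove the theorem.
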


The idea is to degenerate $C$ to a rational curve with $g$ simple nodes in general position using a construction of \cite{Ross}. The nefness of the limit of the classes \eqref{eq:verygenerala=2} follows from the elementary Proposition \ref{prop:p1xp1optimal} concerning the blow-up of $\bb P^1\times\bb P^1$ at $g$ general symmetric pairs of points.

In Corollary \ref{cor:genKouvidakis} we apply the original techniques of Kouvidakis. We degenerate to simple covers of $\bb P^1$ to show that 
for all integers $2\leq d\leq 1+\sqrt g$ and very general $C$:

\begin{equation}\label{eq:generalKouvidakispain}
	d\:\!f_1 + \biggl( 2 + \frac{2g}{d-1}-d \biggr) f_2 - \delta \in
	\operatorname{Nef}(C \times C).
\end{equation}

\noindent See Figure \ref{fig:nefconeVeryGeneralg10}. 

\begin{figure}
	\centering
	\includegraphics[scale=0.7]{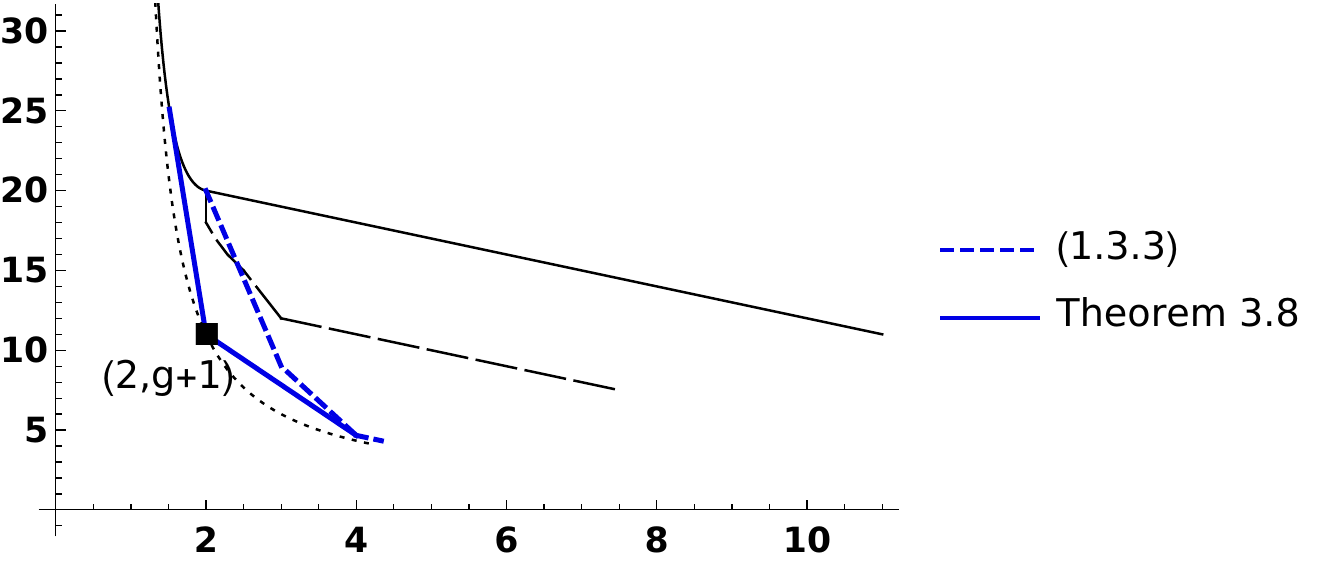}
	\caption{Nef classes on $C\times C$ for $g = 10$ and very general $C$}
	{\footnotesize To improve visibility, we only show the picture above the diagonal $a=b$. Note the difference in scale. The point $(2,g+1)$ comes from Theorem \ref{thrm:verygeneral2}. Keeping the contour from Figure \ref{fig:nefconeArbitraryg10} and from Figure \ref{fig:nefconeGeneralg10} below, we have added two polygonal lines. 
		The inner line represents classes coming from \eqref{eq:generalKouvidakispain}, and from convexity and symmetry. 
		In the outer polygonal line we show classes induced by Theorem \ref{thrm:verygeneral2} and by previous results.}
	\label{fig:nefconeVeryGeneralg10}
\end{figure}

\medskip

Next, we propose an approach to Question \ref{ques:main} in terms of semistability of vector bundles and give partial results.
We start with the simple observation that if $a>1$, then $af_1-\delta$ is ample on the fibers of the second projection $pr_2\colon C\times C\to C$. If $a$ is an integer and $L_a$ is a divisor of degree $a$ on $C$, then we observe in Proposition \ref{prop:curveapprox}.(\ref{prop:curveapproxharder}) that the ``positivity'' of $pr_1^*L_1-\Delta$ (in this case the smallest $b$ such that $af_1+bf_2-\delta$ is nef) is determined asymptotically by a similar measure of positivity of the sheaves $pr_{2*}\cal O\bigl(m(pr_1^*L_a-\Delta)\bigr)$. These are higher conormal bundles of $C$ in the sense of \cite{elstable}.
The idea is an instance of a general ``linearization'' principle that we explain in Proposition \ref{prop:positivitybypushforward}.

Fix a curve $C$ and a rational number $a>1$. In Theorem \ref{thrm:prodcurvesasymss}, we use the above to prove that Question \ref{ques:main} is true for $a$ and $C$ if and only if the higher conormal bundles above are semistable in an asymptotic sense on $C$.

Even in the asymptotic sense, understanding the semistability of the terms in
the sequence of higher conormal bundles seems out of reach.
However, the first (or 0-th, depending on convention) term $pr_{2*}\cal O(pr_1^*L_a-\Delta)$ is well-understood. It is the syzygy bundle $M_{L_a}$, i.e., the kernel of the evaluation morphism $H^0(C,L_a)\otimes\cal O_C\to\cal O_C(L_a)$. Drawing on known results about its semistability, we obtain the following:

\begin{thrma}[see Theorem \ref{thrm:prodcurvescgeneral}.(\ref{thrm:prodcurvescgenerali})]
  Let $C$ be a \emph{general} smooth projective curve of genus $g\geq 2$ over 
$\bb C$.
  Denote by $f_1$ and $f_2$ (resp.\ $\delta$) the classes of the fibers of the
  projections (resp.\ the class of the diagonal in $C \times C$).
  Then, we have
  \[
    d\:\!f_1 + \biggl( 1 + \frac{g}{d-g} \biggr) f_2 - \delta \in
    \operatorname{Nef}(C \times C)
  \]
  for every integer $d \ge \lfloor 3g/2\rfloor+1$.
\end{thrma}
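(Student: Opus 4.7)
The plan is to combine the linearization principle of Proposition \ref{prop:curveapprox} and Theorem \ref{thrm:prodcurvesasymss} with known semistability results for syzygy bundles on general curves. Let $L_d$ be a general line bundle of degree $d$ on $C$. For $d$ in the stated range, $L_d$ is globally generated and non-special, so pushing forward the sequence $0\to\mathcal{O}(-\Delta)\to\mathcal{O}\to\mathcal{O}_\Delta\to 0$ tensored with $pr_1^*L_d$ along $pr_2$ identifies
\[
pr_{2*}\mathcal{O}(pr_1^*L_d - \Delta) \;=\; M_{L_d} \;:=\; \ker\bigl(H^0(C,L_d)\otimes\mathcal{O}_C \twoheadrightarrow L_d\bigr),
\]
a vector bundle of rank $d-g$ and degree $-d$, with slope
\[
\mu(M_{L_d}) \;=\; -\frac{d}{d-g} \;=\; -\Bigl(1+\frac{g}{d-g}\Bigr).
\]

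First, I would invoke Theorem \ref{thrm:prodcurvesasymss} to reduce the nefness of $d\:\!f_1 + (1+g/(d-g))f_2 - \delta$ to an asymptotic semistability statement for the higher conormal sheaves $pr_{2*}\mathcal{O}(m(pr_1^*L_d-\Delta))$ on $C$. In characteristic zero, semistability is preserved under symmetric powers, and the multiplication maps $\Sym^m M_{L_d} \to pr_{2*}\mathcal{O}(m(pr_1^*L_d-\Delta))$ arising from multiplication of global sections let one propagate slope information from level $1$ to all higher levels with the correct leading behavior. Thus the entire problem reduces to the semistability of the single bundle $M_{L_d}$ for a general $L_d$ of degree $d$ on $C$.

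The central input is then a classical semistability result: for a general curve $C$ of genus $g\geq 2$ and a general line bundle $L_d$ of degree $d\geq \lfloor 3g/2\rfloor+1$, the syzygy bundle $M_{L_d}$ is semistable. This is due essentially to Butler, with subsequent refinements (Ein--Lazarsfeld, Mistretta--Stoppino, and others), and the bound $\lfloor 3g/2\rfloor+1$ is precisely the degree threshold at which the semistability of $M_{L_d}$ is established on general curves. Feeding the slope computation $\mu_{\min}(M_{L_d}) = -(1+g/(d-g))$ into the framework of Theorem \ref{thrm:prodcurvesasymss} yields the desired nefness.

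The main obstacle will be technical rather than conceptual: one must verify that the asymptotic reduction through Theorem \ref{thrm:prodcurvesasymss} really lets the $m=1$ semistability of $M_{L_d}$ govern the minimal slopes of $pr_{2*}\mathcal{O}(m(pr_1^*L_d-\Delta))$ for all $m$, despite the possible failure of surjectivity of the multiplication maps $\Sym^m M_{L_d} \to pr_{2*}\mathcal{O}(m(pr_1^*L_d-\Delta))$ and the presence of $R^1pr_{2*}$ contributions in small degree. Once the asymptotic slope bound is in place, it translates by the paper's general machinery directly into nefness of the class in the statement.
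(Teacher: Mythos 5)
Your key input -- the semistability of $M_{L_d}$ for a general line bundle of degree $d\ge\lfloor 3g/2\rfloor+1$ on a general curve, with slope $-d/(d-g)=-(1+\frac g{d-g})$ -- is exactly the right one, and it is what the paper uses. But the reduction you propose does not work. Theorem \ref{thrm:prodcurvesasymss} is an equivalence for the \emph{self-intersection-zero} class $af_1+\bigl(1+\frac g{a-1}\bigr)f_2-\delta$; the class in the statement has second coefficient $1+\frac g{d-g}$, which for $g\ge 2$ is strictly larger than $1+\frac g{d-1}$, so the class has positive self-intersection and is not the one that theorem characterizes. Routing through the asymptotic criterion forces you to control $\mu_{\min}\bigl(M^{(m-1)}(mL)\bigr)$ for all $m$, and the level-one semistability of $M_L$ does not propagate: the maps $\Sym^m M_L\to M^{(m-1)}(mL)$ restrict over $x\in C$ to $\Sym^m H^0(L-x)\to H^0(mL-mx)$, whose surjectivity is projective normality of $L(-x)$ -- a bundle of degree $d-1\ge\lfloor 3g/2\rfloor$, which sits \emph{below} the Green--Lazarsfeld threshold $\lfloor 3g/2\rfloor+2$ for general curves. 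Without that surjectivity you get no lower bound on $\mu_{\min}$ at higher levels, and the limit could a priori be strictly smaller than $\mu(M_L)$. (Indeed, if level-one semistability did control the whole tower in the sense required by Theorem \ref{thrm:prodcurvesasymss}, you would be proving the open Question \ref{ques:main} for integer $a=d$.)

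The fix is to avoid the asymptotic machinery entirely and use the non-asymptotic Proposition \ref{prop:curveapprox}.(\ref{prop:curveapproxeasy}), which is what the paper does: take $\mathcal L=p^*L(-\Delta)$ on $C\times C$ with $\rho=q$. This line bundle has degree $d-1>0$ on fibers of $q$, hence is nef (even ample) there, and is globally generated on the general fiber because $L(-x)$ is globally generated for general $x$ when $L$ is general of degree $\ge g+2$. Since $q_*\mathcal L=M_L$ is semistable, $\overline\mu_{\min}(M_L)=\mu(M_L)=-(1+\frac g{d-g})$, and Proposition \ref{prop:curveapprox}.(\ref{prop:curveapproxeasy}) immediately yields that $c_1(\mathcal L)-\overline\mu_{\min}(M_L)f_2=df_1+\bigl(1+\frac g{d-g}\bigr)f_2-\delta$ is nef. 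No information about $M^{(m-1)}(mL)$ for $m\ge 2$ is needed.
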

When $d<2g$ and $g$ is large, we obtain examples outside the convex span of the known examples mentioned above due to Vojta and Rabindranath.
Slightly better bounds are given in Theorem \ref{thrm:prodcurvescgeneral}.(\ref{thrm:prodcurvescgeneralii}),(\ref{thrm:prodcurvescgeneraliii}).
See also Figure \ref{fig:nefconeGeneralg10}.

\begin{figure}[t]
	\centering
	\includegraphics[scale=0.7]{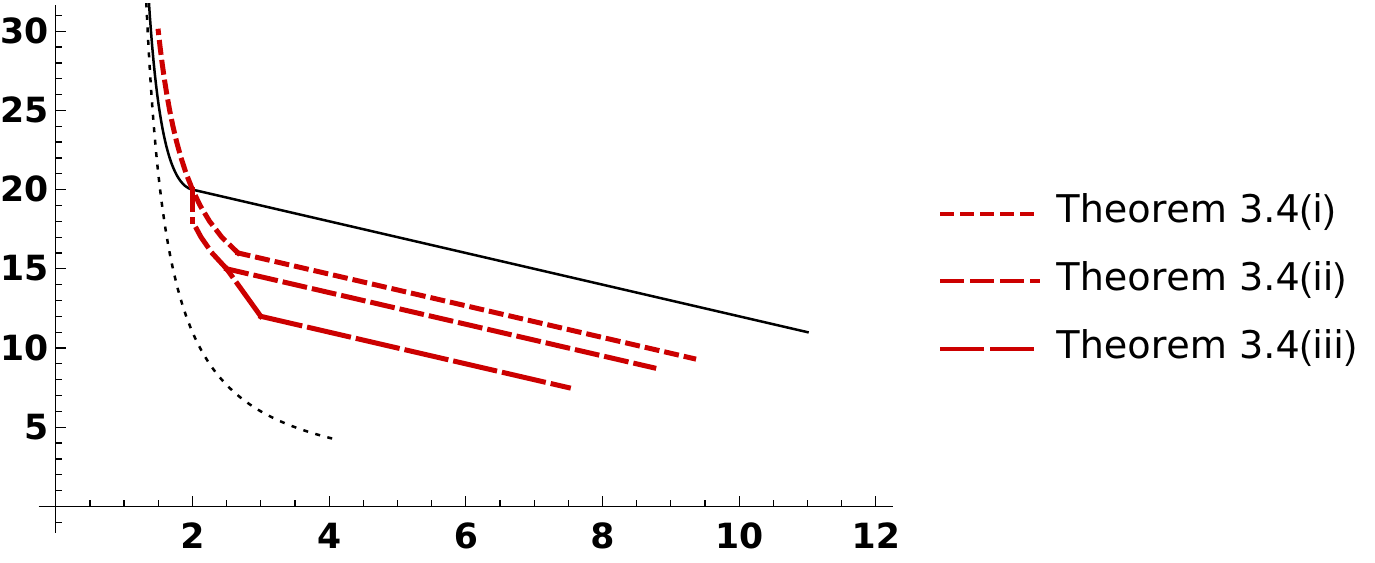}
	\caption{Nef classes on $C\times C$ for $g = 10$ and general $C$}
	{\footnotesize We again focus above the diagonal. Keeping the contour from the arbitrary case, we have added classes coming from the corresponding three parts of Theorem \ref{thrm:prodcurvescgeneral} and from the convexity and symmetry of the nef cone.
	} \label{fig:nefconeGeneralg10}
\end{figure}


\medskip

Finally, we also consider self-products of more than two copies of $C$.
Let $f_i$ be the class of any fiber of the $i$-th projection $C^n\to C$.
Let $\delta_{ij}$ be the class of the large diagonal $\{(x_1,\ldots,x_n)\st x_i=x_j\}$.
With assumptions as in Theorem \ref{thrm:prodcurvescgeneral}, it is immediate that
\[\sum_{i=2}^n\biggl(\Bigl(1+\frac{g}{d-g}\Bigr)f_1+df_i-\delta_{1i}\biggr)\in\Nef(C^n).\]
We show furthermore in Theorem \ref{thrm:mainCn} that if $C$ is an arbitrary smooth complex projective curve of positive genus and $d\in\bb Z$, then for certain values of $n$ and $d$,
\[(n-1)\cdot\biggl(1+\frac{g}{d-g}\biggr)f_1+d\cdot\sum_{i=2}^nf_i-\sum_{1\leq i<j\leq n}\delta_{ij}\in\Nef(C^n).\]
The proof makes use of the rich geometry of symmetric products of curves, and a result of Kempf on continuous global generation of vector bundles on abelian varieties.

\medskip

The material in this paper grew out of the work of the authors on Seshadri constants for vector bundles in \cite{fm19}. 
Versions of Theorems \ref{thrm:prodcurvescgeneral} and \ref{thrm:prodcurvesasymss} were initially included in its preprint form. 
They are now separated as they are of independent interest, 
and the proofs do not need the machinery of Seshadri constants. 
The results for very general curves and for products of arbitrarily many factors in Section
\ref{sec:higherproducts} did not appear in \cite{fm19}. 

\subsection*{Acknowledgments}
We thank Marian Aprodu, Thomas Bauer, Renzo Cavalieri, Alexandru Chirvasitu, Lawrence Ein, Mattias Jonsson,
Alex K\"uronya, Robert Lazarsfeld, Emanuele Macr\` i, Eyal Markman, Mircea
Musta\c{t}\u{a}, S\"onke Rollenske, Julius Ross, Praveen Kumar Roy, John Sheridan, and Brooke Ullery for useful discussions.

\section{Background and notation}

Let $X$ be a projective scheme over an algebraically closed field. While our main results are over $\bb C$,
some of our important tools (Proposition \ref{prop:curveapprox} and its generalization in Proposition \ref{prop:positivitybypushforward}) are valid in arbitrary characteristic. 

\subsection{Formal twists of coherent sheaves}
Let $\cal V$ be a coherent sheaf on $X$, and let $\lambda$ be an $\bb R$-Cartier $\bb R$-divisor on $X$.
Following the case of bundles in \cite[Section 6.2]{laz042},
the \emph{formal twist} of $\cal V$ by $\lambda$ is the pair $(\cal V,\lambda)$,
denoted by $\cal V\langle\lambda\rangle$.
When $D$ is an integral Cartier divisor, the formal twist $\cal V\langle D\rangle$ is identified with 
$\cal V\otimes\cal O_X(D)$. 

The theory of twisted \emph{vector bundles} has pullbacks. 
In particular, when $\cal V$ is a vector bundle and $D$ is a $\bb Q$-Cartier $\bb Q$-divisor and $f\colon X'\to X$ is a finite
morphism such that $f^*D$ is actually Cartier, then $f^*\cal V\langle
f^*D\rangle$ is $f^*\cal V\otimes\cal O_{X'}(f^*D)$.
 
If $\cal V$ is a vector bundle, we have Chern classes $c_1(\cal V\langle\lambda\rangle)\coloneqq c_1(\cal V)+\operatorname{rk}\cal V\cdot\lambda$. They are natural for pullbacks. 

Tensor products are defined by 
$\cal V\langle\lambda\rangle\otimes\cal V'\langle\lambda'\rangle\coloneqq  (\cal
V\otimes\cal V')\langle\lambda+\lambda'\rangle$.
Generally, when we talk about extensions, subsheaves, or quotients of twisted sheaves, or about morphisms between twisted sheaves, 
we understand that the twist $\lambda$ is fixed. 

Let $\bb P(\cal V)\coloneqq\operatorname{Proj}_{\cal O_X}\Sym^*\cal V$.
Let $\rho\colon\bb P(\cal V)\to X$ denote the natural projection map, 
and let $\xi$ denote the first Chern class of the relative 
$\cal O_{\bb P(\cal V)}(1)$ line bundle. 
If $\lambda$ is an $\bb R$-Cartier $\bb R$-divisor on $X$, 
define $\bb P(\cal V\langle\lambda\rangle)$ as $\bb P(\cal V)$, polarized with
the $\rho$-ample $\bb R$-Cartier $\bb R$-divisor
$\cal O_{\bb P(\cal V\langle\lambda\rangle)}(1)\coloneqq  \cal O_{\bb P(\cal V)}(1)\langle\rho^*\lambda\rangle$
whose first Chern class is $\xi+\rho^*\lambda$. 
This is in line with the classical formula $\cal O_{\bb P(\cal V\otimes\cal O_X(D))}(1)=\cal O_{\bb P(\cal V)}(1)\otimes\rho^*\cal O_X(D)$ whenever $D$ is a Cartier divisor. 

 The sheaf $\cal V$ is said to be \emph{ample} (resp.\ \emph{nef}) if the Cartier divisor class $\xi$ has the same property. This extends formally to twists.

\subsection{Slopes and positivity}
Assume that $C$ is a smooth projective curve. 
Let $\cal V$ be a coherent sheaf and let $\lambda$ be an $\bb R$-divisor. 
The \emph{degree} of $\cal V\langle\lambda\rangle$ is $\deg\cal V+\operatorname{rk}\cal V\cdot\deg\lambda$.
The \emph{slope} of the twisted coherent sheaf $\cal V\langle\lambda\rangle$ on $X$ is 
\[\mu(\cal V\langle\lambda\rangle)\coloneqq  \frac{\deg\cal V\langle\lambda\rangle}{\operatorname{rk}\cal V}.\]
By convention, the slope of torsion sheaves is infinite.
If $\cal V$ and $\cal V'$ are (twisted) coherent sheaves, then
\begin{equation}\label{eq:slope+}
\mu(\cal V\otimes\cal V')=\mu(\cal V)+\mu(\cal V').
\end{equation}
The smallest slope of any quotient of $\cal V$ is denoted by $\mu_{\rm min}(\cal V)$. 
Put $\mu_{\min}(\cal V\langle\lambda\rangle)\coloneqq\mu_{\min}(\cal V)+\deg\lambda$.
A quotient of $\cal V$ with minimal slope exists, and is determined by the Harder--Narasimhan filtration of $\cal V$.
In characteristic $0$, set $\overline{\mu}_{\rm min}(\cal V)\coloneqq  \mu_{\rm min}(\cal V)$.
In characteristic $p>0$, let $F\colon C\to C$ be the absolute Frobenius morphism, and consider
\[ \overline{\mu}_{\mathrm{min}}(\cal V) \coloneqq   \lim_{n \to \infty}
\frac{\mu_{\mathrm{min}}\bigl( (F^n)^*\cal V\bigr)}{p^n}.\]
The sequence in the limit is weakly decreasing and eventually stationary. 
In fact, \cite[Theorem 2.7]{Langer04} proves that there exists 
$\delta=\delta_{\cal V}\geq 0$ such that the Harder--Narasimhan filtration of $(F^{\delta+n})^*\cal V$ is 
the pullback of the Harder--Narasimhan filtration of $(F^{\delta})^*\cal
V$ for all $n\geq0$. 
In particular, the rational number $\overline{\mu}_{\rm min}(\cal V)=\frac{\mu_{\rm min}((F^{\delta})^*\cal V)}{p^{\delta}}$ 
is the smallest normalized slope of all quotients of all iterated Frobenius pullbacks $(F^n)^*\cal V$.
For twisted sheaves, put $\overline{\mu}_{\rm min}(\cal V\langle\lambda\rangle)=\mum(\cal V)+\deg\lambda$.

\begin{lem}[{\cite[Theorem 1.1]{BP14}}]\label{lem:bp14}
Let $C$ be a smooth projective curve over an algebraically closed field.
Let $\cal V\langle\lambda\rangle$ be a twisted vector bundle on $C$.
Denote $X\coloneqq\bb P(\cal V\langle\lambda\rangle)$ with bundle map
$\rho\colon X \to C$.
Denote by $\xi$ the numerical first Chern class of the relative (twisted) $\cal O_{\bb P(\cal V\langle\lambda\rangle)}(1)$ sheaf,
and by $f$ the class of a fiber of $\rho$. 
Then, we have
\[\Nef(X)=\bigl\langle \xi-\mum(\cal V\langle\lambda\rangle)\:\!f,f\bigr\rangle.\]
In particular, $\cal V\langle\lambda\rangle$ is nef if and only if $\mum(\cal V\langle\lambda\rangle)\geq 0$.
\end{lem}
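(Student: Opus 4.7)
The plan is to exploit that $N^1(X)_{\bb R}$ is two-dimensional and reduce the description of $\Nef(X)$ to a Miyaoka-type nefness criterion for twisted vector bundles on the curve $C$. Since $\rho\colon X\to C$ is a projective bundle over a curve, $N^1(X)_{\bb R}$ has rank $2$, spanned by $\xi$ and $f$. The fiber class $f$ is the pullback of a point on $C$, hence nef, so $\Nef(X)$ is a closed $2$-dimensional cone with one extremal ray $\bb R_{\geq 0}f$. The other extremal ray must have the form $\bb R_{\geq 0}(\xi - t_0 f)$ for $t_0 \coloneqq \sup\{t\in\bb R : \xi - tf\in\Nef(X)\}$, and the content of the lemma is the identification $t_0 = \mum(\cal V\langle\lambda\rangle)$, with the ``in particular'' being the $t=0$ specialization.

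The first step is to translate classes on $X$ into twists of $\cal V$. For $t\in\bb Q$ and any closed point $p\in C$, the scheme $\bb P(\cal V\langle\lambda - tp\rangle)$ coincides with $X$, and its relative $\cal O(1)$ has first Chern class $\xi - tf$; hence $\xi - tf$ is nef iff the twisted bundle $\cal V\langle\lambda - tp\rangle$ is nef on $C$, with the real case following from closedness of $\Nef(X)$. The second step is to invoke the curve-case nefness criterion: a twisted vector bundle $\cal W$ on a smooth projective curve is nef iff $\mum(\cal W) \geq 0$. In characteristic zero this is the classical theorem of Miyaoka (with $\mum = \mu_{\rm min}$). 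Applied to $\cal W = \cal V\langle\lambda - tp\rangle$ and combined with the additivity $\mum(\cal W) = \mum(\cal V\langle\lambda\rangle) - t$ recorded in the preceding subsection, this yields $t_0 = \mum(\cal V\langle\lambda\rangle)$, proving both assertions of the lemma.

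The main obstacle is the ``if'' direction of the curve nefness criterion in positive characteristic: one must infer nefness of $\cal V$ from the hypothesis that no iterated Frobenius pullback $(F^n)^*\cal V$ admits a quotient of significantly negative normalized slope. My approach is to appeal to Langer's stabilization result (quoted in the paper just before the lemma), which says that after sufficiently many Frobenius pullbacks the Harder--Narasimhan filtration is preserved by further Frobenius pullback. Consequently the hypothesis $\mum(\cal V)\geq 0$ upgrades to the genuine inequality $\mu_{\rm min}((F^\delta)^*\cal V)\geq 0$ for some $\delta\geq 0$, and the classical characteristic-zero argument then applies: induction on rank along the Harder--Narasimhan filtration reduces to the semistable case of nonnegative slope, where Hartshorne-style positivity shows that an appropriate symmetric power is globally generated and hence $(F^\delta)^*\cal V$ is nef. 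Since $F^\delta$ is a finite surjection, nefness descends to $\cal V$. The surrounding reductions are essentially formal, so the substance of the proof is concentrated in this single characteristic-$p$ step.
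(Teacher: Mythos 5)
The paper does not prove this lemma; it quotes it from Biswas--Parameswaran and its predecessors (Barton, Miyaoka, Brenner, \ldots), so the comparison here is with the standard proofs in those references. Your overall architecture is the right one and matches them: $N^1(X)$ is spanned by $\xi$ and $f$, the cone is pinned down by the single number $t_0=\sup\{t : \xi-tf \in \Nef(X)\}$, the identification $\bb P(\cal V\langle\lambda-tp\rangle)=X$ with $c_1(\cal O(1))=\xi-tf$ reduces everything to a nefness criterion for twisted bundles on $C$, and the additivity of $\mum$ under twisting finishes the reduction. The ``only if'' direction of the curve criterion is also fine (a quotient of $(F^n)^*\cal V$ of negative slope gives a curve in $X$ meeting $\xi$ negatively after normalizing by $p^n$).

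The gap is exactly where you locate the substance, in the ``if'' direction in characteristic $p$, and your fix as written does not work. After Langer's stabilization you correctly reduce to showing that $(F^\delta)^*\cal V$ with $\mu_{\min}((F^\delta)^*\cal V)\ge 0$ is nef, and by HN induction to the case of a semistable bundle of nonnegative slope. But ``semistable of nonnegative slope implies nef'' is \emph{false} in positive characteristic --- this is precisely the content of the paper's own Remark following the lemma and of Hartshorne's Example 3.2: there are semistable bundles of positive degree that are not ample. The Hartshorne-style argument you invoke needs symmetric (or tensor) powers of the semistable pieces to remain semistable, which fails for merely semistable bundles in characteristic $p$. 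What saves the argument is that the graded pieces of the HN filtration of $(F^\delta)^*\cal V$ are \emph{strongly} semistable (all further Frobenius pullbacks stay semistable), and one must then input the Ramanan--Ramanathan theorem that tensor products of strongly semistable bundles are strongly semistable before running the global-generation argument. Alternatively, and more cleanly, one can bypass symmetric powers entirely: nefness of $\cal V$ on a curve is equivalent to every quotient line bundle of $g^*\cal V$ having nonnegative degree for every finite $g\colon C'\to C$; factoring $g$ as a separable map composed with Frobenius powers gives $\mu_{\min}(g^*\cal V)\ge \deg(g)\cdot\mum(\cal V)$, and the criterion follows. Either way, a nontrivial input beyond ``the classical characteristic-zero argument'' is required at exactly this step, and you should name it.
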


The version in \cite{BP14} holds more generally for Grassmann bundles over curves.  
The result was seemingly first proved by Barton \cite[Theorem 2.1]{Barton71}. 
It is also stated explicitly by Brenner in \cite[Theorem 2.3]{Brenner04} and \cite[p.\ 534]{Brenner06}, 
Biswas in \cite[Theorem 1.1]{Biswas05}, and Zhao in \cite[Theorem 4.3]{Zhao17}.
In characteristic zero it follows easily from Hartshorne's Theorem \cite[Theorem 6.4.15]{laz042},
as observed by Miyaoka \cite{Miy87}. A similar computation is carried out by the first author in \cite{ful11} to nef classes of arbitrary codimension.

\begin{rmka}In positive characteristic, it is necessary to work with $\mum(\cal V)$ instead of $\mu_{\rm min}(\cal V)$. See \cite[Example 3.2]{har71} for a counterexample to the na\"ive positive characteristic analogue of Hartshorne's Theorem \cite[Theorem 6.4.15]{laz042}.
\end{rmka}

\section{Products of curves}
  Let $C$ be a smooth projective curve of genus $g$ over $\bb C$.
  Let $p$ and $q$ denote the projections onto each
  factor of $C\times C$.
  Let $f_1$ denote the class of the fiber of $p$ and $f_2$ the class of a fiber
  of $q$.
  Denote by $\delta$ the class of the diagonal $\Delta$.

For large genera, it is a tantalizing open problem to understand the nef cone of $C\times C$, 
even in the symmetric slice given by intersecting with the span of $f_1+f_2$ and $\delta$.

\subsection{Elementary considerations}
\begin{rmk}[Necessary conditions for nefness]
  Below, $a$, $b$, and $c$ denote non-negative real numbers.
  \begin{enumerate}[(1)]
    \item The classes $f_1$ and $f_2$ are clearly on the boundary of $\Nef(C\times C)$.
    \item The class $af_1+bf_2+c\delta$ is nef if and only if
      $(af_1+bf_2+c\delta)\cdot\delta=a+b-c(2g-2)\geq 0$.
      For example, $(g-1)f_1+(g-1)f_2+\delta$ is the pullback 
      of the theta polarization on the Jacobian of $C$ via the difference map
      \begin{align*}
        C \times C &\longrightarrow {\rm Jac}(C)\\
        (x,y) &\longmapsto \cal O_C(x-y).
      \end{align*}
       \item If $b$ and $c$ are not both zero, then the class $\pm af_1-bf_2-c\delta$ is not nef (or even
      pseudo-effective), because it has negative intersection with $f_1$. By symmetry, the analogous statement holds for $-af_1\pm bf_2-c\delta$ if $a$ and $c$ are not both zero.
\item The class $-af_1-bf_2+c\delta$ is only pseudo-effective when $a=b=0$, and only nef when $a=b=c=0$.
     \end{enumerate}
\end{rmk}

Thus, the classes that are not well-understood (up to scaling and
interchanging $f_1$ and $f_2$) are those of form 
\begin{enumerate}[(1)]
\item $af_1+bf_2-\delta$. By intersecting with $f_1$ and $f_2$, we get $a\geq 1$ and $b\geq 1$
as necessary conditions for these classes to be nef. By considering their self-intersections,
we also have $a>1$ and $b\geq 1+\frac g{a-1}$ as necessary conditions.
\item $-af_1+bf_2+\delta$. Here $0\leq a<1$ and $b\geq \frac g{1-a}-1$ are necessary conditions
for the class to be nef.
\item $af_1-bf_2+\delta$ with $0\leq b<1$ and $a\geq \frac g{1-b}-1$. 
\end{enumerate}

\begin{rmk}[Genus $g=1$]\label{rmk:g=1}
	The conditions above are also sufficient when $C$ is an elliptic curve. See \cite[Lemma 1.5.4]{laz04}.
\end{rmk}

Question \ref{ques:main}, which asks for the nefness of the class
\begin{equation}\label{eq:conjcurves}
  a\:\!f_1 + \biggl( 1 + \frac{g}{a-1} \biggr) f_2 - \delta,
\end{equation}
predicts that the conditions above are sufficient for classes of the form $af_1+bf_2-\delta$ for very general curves of sufficiently large genus.

\subsection{Nef classes for arbitrary curves}

\begin{rmk}[Rabindranath--Vojta divisors]\label{rmk:RabindranathVojta}
	Let $C$ be an \emph{arbitrary} smooth projective curve of genus $g\geq 1$.
Inspired by \cite{vojta}, Rabindranath in \cite[Proposition 3.2]{ashwath}
proves that if $r,s>0$, then 
$(\sqrt{(g+s)r^{-1}}+1)f_1+(\sqrt{(g+s)r}+1)f_2-\delta$
is nef if $r\geq \frac{(g+s)(g-1)}s$.\footnote{Note that \cite{ashwath} denotes our class $\delta-f_1-f_2$ by $\delta$.}
We thereby deduce the nefness of the divisor
\begin{equation}\label{eq:Vojta}
a\:\!f_1+\biggl(1+\frac g{a-1}+(g-1)(a-1)\biggr)f_2-\delta
\end{equation}
for $a>1$.
These are close to the conjectural bound \eqref{eq:conjcurves} for 
$a$ close to 1. See Figure \ref{fig:nefconeArbitraryg10}.

The original argument of Vojta \cite{vojta} applies to the classes $-af_1+bf_2+\delta$ with $0\leq a<1$ and proves that
\begin{equation}\label{eq:Vojta3}
-a\:\!f_1+\biggl(-1+\frac g{1-a}+(g-1)(1-a)\biggr)f_2+\delta\text{ is nef}.
\end{equation}
\end{rmk}

\subsection{Our main results for general curves}

We construct examples of nef classes for $C$ general. They improve
the examples in Remark \ref{rmk:RabindranathVojta} that were valid for arbitrary $C$.

\begin{thrm}\label{thrm:prodcurvescgeneral}
	
  Let $C$ be a \emph{general} smooth projective curve of genus $g$ over $\bb C$.
  Then
  
  \begin{enumerate}[\normalfont(i)]
  	\item\label{thrm:prodcurvescgenerali}  	
  	If $g\geq 2$, then for all integers $d\geq \lfloor 3g/2\rfloor +1$ the divisor class
  	\[
  	d\:\!f_1+\left(1+\frac g{d-g}\right)f_2-\delta\text{ is nef}.
  	\]
  	\item\label{thrm:prodcurvescgeneralii} 
  	  	If $g\geq 3$, then for all integers $2g-2\geq d\geq \lfloor 3g/2\rfloor$ the divisor class
  	\[
  	d\:\!f_1+\frac d{d-g+1}f_2-\delta\text{ is nef}.
  	\]
  	
  	\item\label{thrm:prodcurvescgeneraliii}
  	If $g\geq 10$, then the divisor class
  	\[
  	\bigl(\lfloor3g/2\rfloor-3\bigr)f_1+\frac{\lfloor3g/2\rfloor-3}{\lfloor g/2\rfloor-1}f_2-\delta\text{ is nef}.
  	\]
  	 \end{enumerate}
\end{thrm}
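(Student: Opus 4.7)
The strategy in all three parts is the linearization principle discussed in the introduction (cf.\ Proposition \ref{prop:curveapprox}): for a base-point-free degree-$d$ line bundle $L_d$ on $C$, the \emph{syzygy bundle}
\[0 \longrightarrow M_{L_d} \longrightarrow H^0(C,L_d) \otimes \cal O_C \xrightarrow{\operatorname{ev}} L_d \longrightarrow 0\]
is identified via flat base change with $pr_{2*}\cal O(pr_1^*L_d - \Delta)$, and the adjunction surjection $pr_2^* M_{L_d} \twoheadrightarrow \cal O(pr_1^* L_d - \Delta)$ realizes the class $df_1 - \delta$ on $C \times C$ as the pullback of the tautological class $\xi$ on $\bb P(M_{L_d})$. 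Combined with Lemma~\ref{lem:bp14}, this reduces nefness of $df_1 + bf_2 - \delta$ to the inequality $b \geq -\mum(M_{L_d})$; when $M_{L_d}$ is semistable, one has $-\mum(M_{L_d}) = -\mu(M_{L_d}) = d/\operatorname{rk}M_{L_d}$.

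Each of the three parts then amounts to producing, for the specified $d$, a degree-$d$ line bundle $L_d$ on a general curve $C$ whose syzygy bundle $M_{L_d}$ is semistable and has the indicated rank.

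For part (\ref{thrm:prodcurvescgenerali}), take $L_d$ general in $\operatorname{Pic}^d(C)$: for $d \geq \lfloor 3g/2 \rfloor + 1$ Brill--Noether theory gives $h^0(L_d) = d - g + 1$ and $L_d$ very ample, so $\operatorname{rk}M_{L_d} = d - g$, which yields the coefficient $d/(d-g) = 1 + g/(d-g)$. For part (\ref{thrm:prodcurvescgeneralii}), in the range $\lfloor 3g/2 \rfloor \leq d \leq 2g-2$, take $L_d = K_C - E$ for $E$ a general effective divisor of degree $2g - 2 - d$. Since $\deg E \leq \lceil g/2\rceil - 2$, a general such $E$ satisfies $h^0(E)=1$, so Serre duality and Riemann--Roch yield $h^0(L_d) = d - g + 2$; hence $\operatorname{rk}M_{L_d} = d - g + 1$ and the improved coefficient $d/(d-g+1)$. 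For part (\ref{thrm:prodcurvescgeneraliii}), take $L_d$ general in the Brill--Noether locus $W^{\lfloor g/2 \rfloor - 1}_{\lfloor 3g/2 \rfloor - 3}(C)$: direct computation gives Brill--Noether number $\rho = g - 2\lfloor g/2 \rfloor \in \{0,1\}$, so this locus is nonempty on a general $C$, and $\operatorname{rk}M_{L_d} = \lfloor g/2\rfloor - 1$ produces the stated denominator.

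The principal obstacle in each case is the semistability of the corresponding $M_{L_d}$. Part (\ref{thrm:prodcurvescgenerali}) draws on semistability theorems for syzygy bundles of generic line bundles on general curves, which are available in the range $d \geq \lfloor 3g/2\rfloor+1$. Parts (\ref{thrm:prodcurvescgeneralii}) and (\ref{thrm:prodcurvescgeneraliii}) rely on semistability in increasingly restricted settings (line bundles of the form $K_C - E$, and Brill--Noether extremal special linear series); the numerical thresholds $g\geq 3$ and $g\geq 10$ presumably mark exactly the ranges where the corresponding sharpened semistability results are established.
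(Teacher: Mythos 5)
Your overall architecture coincides with the paper's: reduce the nefness of $df_1+bf_2-\delta$ to the semistability of the syzygy bundle $M_{L}$ via Lemma \ref{lem:bp14} and the relative evaluation map, then choose $L$ in each part so that $\operatorname{rk} M_L=h^0(C,L)-1$ takes the values $d-g$, $d-g+1$, and $\lfloor g/2\rfloor-1$. Your choices of $L$ (general of degree $d$; $K_C-E$ with $h^0(C,E)=1$; a Brill--Noether extremal series with $\rho\in\{0,1\}$) and the resulting rank and slope computations all match the paper.

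There are, however, two gaps. First, the semistability input is left as a black box, and your guess about what it requires is off the mark: the paper uses a \emph{single} criterion in all three parts, namely that $M_L$ is semistable whenever $L$ is globally generated with $\deg L-2(h^0(C,L)-1)\leq{\rm Cliff}(C)=\lfloor(g-1)/2\rfloor$ for $C$ general. One then checks this inequality for each chosen $L$; in part (\ref{thrm:prodcurvescgeneraliii}) it holds with equality ($L$ and $E$ compute the Clifford index), which is why that construction produces only the one class $d=\lfloor 3g/2\rfloor-3$. There are no ``increasingly restricted'' or ``sharpened'' semistability theorems involved. Second, and relatedly, the surjection $pr_2^*M_L\twoheadrightarrow\cal O(pr_1^*L-\Delta)$ that you assert is only surjective on the fiber over $x$ when $L(-x)$ is globally generated; without surjectivity on a general fiber you cannot transfer nefness of the twisted $M_L$ to the twisted $p^*L(-\Delta)$ via Proposition \ref{prop:curveapprox}.(\ref{prop:curveapproxeasy}). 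Verifying global generation of $L$ and of $L(-x)$ is exactly where the numerical hypotheses you attribute to semistability actually live: in part (\ref{thrm:prodcurvescgeneralii}) it reduces by Serre duality to $h^0(C,E+x+p)=1$, i.e.\ $\deg E+2<{\rm gon}(C)=\lfloor(g+3)/2\rfloor$; and in part (\ref{thrm:prodcurvescgeneraliii}), where $\deg E$ equals the gonality, one needs $h^0(C,E+x+p)=2$ for all $x,p$, which is where $g\geq 10$ (staying below the next Brill--Noether threshold $2g/3+2$) enters. These verifications are completable, but as written your proposal asserts the key surjection without the hypotheses that make it true.
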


See Figure \ref{fig:nefconeGeneralg10} for a representation in genus 10
for how Theorem \ref{thrm:prodcurvescgeneral} improves Remark \ref{rmk:RabindranathVojta} for general curves.
When $C$ is an arbitrary smooth projective curve of genus $g\in\{0,1\}$, in fact $df_1+\bigl(1+\frac g{d-g}\bigr)f_2-\delta$ is nef for all \emph{real} $d>1$. See Remark \ref{rmk:g=1}.
The key ingredient of our proof of Theorem \ref{thrm:prodcurvescgeneral} is the semistability of kernel bundles.

\begin{defn}
	Let $X$ be a proper scheme over a field, and let $L$ be a locally free sheaf
  (usually a line bundle) on $X$. The \emph{kernel bundle} (sometimes called the
  \emph{Lazarsfeld--Mukai bundle}) $M_L$ is defined by the following exact sequence:
	\[0\longrightarrow M_L\longrightarrow H^0(X,L)\otimes\cal O_X\overset{{\rm ev}}{\longrightarrow} L.\]
\end{defn}

\noindent When $X=C$ is a curve, then $M_L=q_*(p^*L(-\Delta))$. In general $M_L=q_*(p^*L\otimes\cal I_{\Delta})$.
If $L$ is globally generated on $X$, then the invariants of $M_L$ are determined by $L$. For example $c_1(M_L)=-c_1(L)$ and ${\rm rk}\, M_L=h^0(X,L)-{\rm rk}\, L$.

\begin{proof}[Proof of Theorem \ref{thrm:prodcurvescgeneral}] 
	
	(\ref{thrm:prodcurvescgenerali}). On any smooth curve $C$, the bundle
	$M_{L}$ is semistable
	if $L$ is globally generated of degree $d$
	with $d-2(h^0(C,L)-1)\leq{\rm Cliff}(C)$.
	This result appears in several references, e.g., \cite{PR88}, \cite{Butler94}, \cite{Camere}, \cite{BrambilaOrtega},
	\cite[Theorem 1.3]{MisStop}, or \cite[Proposition 3.1]{EusenSchreyer}.
	
	When $C$ is general, ${\rm Cliff}(C)=\lfloor (g-1)/2\rfloor$ by \cite{acgh}. If furthermore $L$ is general of degree $d\geq\lfloor 3g/2\rfloor+1\geq g+2$, then $L$ is globally generated, and so is $L(-x)$ for general $x\in C$ (e.g., by \cite[\S3]{KM20}).
	Furthermore $L$ is non-special (i.e., $h^1(C,L)=0$), therefore $h^0(C,L)-1=d-g$, and $d-2(h^0(C,L)-1)=2g-d\leq\lfloor (g-1)/2\rfloor$.
	
	We deduce that $M_L$ is semistable of slope $-\frac d{d-g}=-\bigl(1+\frac g{d-g}\bigr)$. 
	By Lemma \ref{lem:bp14}, the twisted bundle $M_L\langle\frac{d}{d-g}x\rangle$ is nef.
	Furthermore the natural fiberwise evaluation map $\epsilon\colon q^*M_L\to p^*L(-\Delta)$ relative to $q$ specializes over general $x$ to $H^0(C,L(-x))\otimes\cal O_C\to L(-x)$, hence it is surjective on the general fiber. 
	Note that $p^*L(-\Delta)$ has degree $d-1$ on the fibers of $q$, hence it is relatively ample.
	Proposition \ref{prop:curveapprox}.(\ref{prop:curveapproxeasy}) applies to the twisted map $\epsilon\langle\frac{d}{d-g}q^*x\rangle$ proving that
	$p^*L(-\Delta)\langle\frac{d}{d-g}q^*x\rangle$ is nef. This proves the claim.
	
	(\ref{thrm:prodcurvescgeneralii}). When $L$ is globally generated of degree $d$ and $h^1(C,L)=1$, then $\mu(M_L)=-\frac d{d-g+1}$. 
	We look for $L$ satisfying the following properties:
	\begin{enumerate}[(a)]
		\item $L$ is globally generated of degree $d$ with $2g-2\geq d\geq \lfloor 3g/2\rfloor$.
		\item $h^1(C,L)=1$.
		\item $L(-x)$ is globally generated for general $x\in C$.
	\end{enumerate}
	Note that conditions (a) and (b) together imply $d-2(h^0(C,L)-1)\leq\lfloor(g-1)/2\rfloor$, which gives the semistability of $M_L$. In fact the inequality is strict.
	
	Condition (b) is equivalent by Serre duality to $L=\omega_C(-E)$ for some effective divisor $E$ with $h^0(C,E)=1$. Let $e=\deg E\geq 0$.
	Condition (b) is then met if we pick $E$ with $0\leq e<{\rm gon}(C)$. For general $C$, we have ${\rm gon}(C)=\bigl\lfloor\frac{g+3}2\bigr\rfloor=\bigl\lceil\frac g2+1\bigr\rceil$.
	
	We now focus on (c). Let $p\in C$ be an arbitrary point. The bundle $L(-x)$ is generated at $p$ if $H^1(C,L(-x-p))\to H^1(C,L(-x))$ is an isomorphism.
	By Serre duality, this is equivalent to the natural map $H^0(C,E+x)\to H^0(C,E+x+p)$ being an isomorphism, where $L=\omega_C(-E)$ as above. The map is in any case injective, and both spaces are at least 1-dimensional.
	It is sufficient to ask $h^0(C,E+x+p)=1$. As above, for $C$ general, this is implied by $2\leq e+2< \bigl\lfloor\frac{g+3}2\bigr\rfloor$.
	In fact, under this assumption, $L(-x)$ is globally generated for all $x$.
	
	Note that if $0\leq e\leq\bigl\lfloor\frac{g+3}2\bigr\rfloor-3$, then $d=2g-2-e$ is in the range $2g-2\geq d\geq\lfloor 3g/2\rfloor$. 
	
	Finally, to settle (a), we want to show that for all $0\leq e\leq \bigl\lfloor\frac{g+1}2\bigr\rfloor-3$ we can find $E$ effective of degree $e$ with $L=\omega_C(-E)$ globally generated. Arguing as in (c), we find that any effective $E$ will do. 
	
	(\ref{thrm:prodcurvescgeneraliii}). For $d=\lfloor\frac{3g}2\rfloor-3$,
	we have $e=2g-2-d=\lfloor\frac{g+3}2\rfloor$. 
	This is the gonality of a general curve. On such a general curve $C$ we can pick a divisor $E$
	of degree $e$ that is globally generated and $h^0(C,E)\geq 2$.
	In fact $h^0(C,E+x+p)=2$ for all $x,p\in C$ because $e+2=\lfloor\frac{g+3}2\rfloor+2<\frac {2g}3+2$,
	the next Brill--Noether threshold, under the assumption $g\geq 10$.
	As in part (\ref{thrm:prodcurvescgeneralii}) we deduce that $L=\omega_C(-E)$ and $L(-x)$ are globally generated for all $x\in C$.
	In this case $h^1(C,L)=h^0(C,E)=2$ and the inequality $d-2(h^0(C,L)-1)=e-2(h^0(C,E)-1)\leq \lfloor\frac{g-1}2\rfloor$ 
	that gives the semistability of $M_L$ is in fact an equality. 
	The divisors $L$ and $E$ compute the Clifford index of the curve.
	This is what restricts the result of (\ref{thrm:prodcurvescgeneraliii}) to just one class.		
\end{proof}

	
	

\begin{rmka}The Rabindranath--Vojta examples \eqref{eq:Vojta} or the generalized Kouvidakis classes \eqref{eq:genKouvidakis}  for $a=2$ 
give $2f_1+2gf_2-\delta\in\Nef(C\times C)$. 
Theorem \ref{thrm:prodcurvescgeneral}.(\ref{thrm:prodcurvescgenerali}) for $d=2g$ gives $2gf_1+2f_2-\delta\in\Nef(C\times C)$, which is the same class up to symmetry. 
For this reason, the divisors in Theorem \ref{thrm:prodcurvescgeneral}.(\ref{thrm:prodcurvescgenerali}) improve the Rabindranath--Vojta examples \eqref{eq:Vojta} only in the range $\lfloor 3g/2\rfloor+1\leq d<2g$, which is nonempty when $g\geq 3$. See Figure \ref{fig:nefconeGeneralg10}.

\end{rmka}

Towards answering Question
\ref{ques:main}, we expect that better bounds will arise
from an understanding of the semistability of higher-order
generalizations of the bundles $M_L$.
\begin{defn}\label{defn:higherconormal}If $L$ is a Cartier divisor on $C$ and $i\geq 0$ is an integer, we denote 
	\begin{align*}M^{(i-1)}(L)&\coloneqq q_*\bigl(p^*\cal O_C(L)\otimes\cal O_X(-i\Delta)\bigr)\\
		T^{i-1}(L)&\coloneqq q_*\bigl(p^*\cal O_C(L)\otimes\cal O_X(i\Delta)\bigr).
	\end{align*}
\end{defn}
\par We have $M^{(0)}(L)=M_L$.
When $L$ is globally generated, this is the pullback of the twisted cotangent space $\Omega\bb P^r(1)$ via the
morphism induced by $\lvert L\rvert$.
If $\lvert L\rvert$ is an embedding, then $M^{(1)}(L)=N^{\vee}_C\bb P^r\otimes\cal
O_C(L)$ is a twist of the conormal bundle. \cite{elstable}, Ein and
Lazarsfeld use the notation $R^{i-1}(L)$ instead of $M^{(i-1)}(L)$, and  call
them \emph{higher conormal bundles}.

\par We show that Question \ref{ques:main} can be restated in terms of 
the semistability in an asymptotic sense
of higher conormal bundles of $C$.

\begin{thrm}\label{thrm:prodcurvesasymss}
	Let $C$ be an \emph{arbitrary} smooth projective curve of genus $g$ over $\bb C$.
	\begin{enumerate}[\normalfont(i)]
		\item If $a>1$ is a rational number, then the class \eqref{eq:conjcurves}
		$af_1+\bigl(1+\frac g{a-1}\bigr)f_2-\delta$ is nef if and only if 
		the sheaves $M^{(n-1)}(nL)$ are \emph{asymptotically semi-stable}%
		\footnote{It makes sense to ask if $M^{(n-1)}(nL)$ is (semi)stable for large divisible $n$. See also \cite[Conjecture 4.2]{elstable}.}
		, i.e.,
		\[
		\lim_{n\to\infty}\frac 1n\mu_{\rm min}\bigl(M^{(n-1)}(nL)\bigr)=\lim_{n\to\infty}\frac 1n\mu\bigl(M^{(n-1)}(nL)\bigr),
		\]
		where $L$ is an arbitrary $\bb Q$-divisor on $C$ with $\deg L=a$, and $n$ is sufficiently divisible. 
		
		\item If $0\leq a<1$ is rational, then $-af_1+\bigl(-1+\frac g{1-a}\bigr)f_2+\delta$ is nef if and only if the sheaves $T^{n-1}(-nL)$ are asymptotically semi-stable.
	\end{enumerate}
	

\end{thrm}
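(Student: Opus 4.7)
The strategy is to linearize the nefness question on $C\times C$ into an asymptotic slope statement on $C$ via the pushforward bundles
$E_n := q_*\cal O_{C\times C}\bigl(n(p^*L-\Delta)\bigr)$.
Fix a $\bb Q$-divisor $L$ on $C$ with $\deg L = a > 1$, and let $n$ range over sufficiently divisible and sufficiently large integers so that $n(p^*L-\Delta)$ is Cartier and relatively very ample for $q$ (its relative degree is $n(a-1)$). Cohomology and base change then identify $E_n = M^{(n-1)}(nL)$ with a vector bundle on $C$ of rank $n(a-1)-g+1$, and the evaluation surjection $q^*E_n \twoheadrightarrow \cal O\bigl(n(p^*L-\Delta)\bigr)$ gives a closed embedding $\iota\colon C\times C \hookrightarrow \bb P(E_n)$ over $C$ satisfying $\iota^*\xi = n(p^*L-\Delta)$, where $\xi$ is the first Chern class of the relative $\cal O(1)$ on $\rho\colon \bb P(E_n) \to C$.

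Denote by $b_* \in \bb R$ the smallest $b$ for which $af_1 + bf_2 - \delta$ is nef on $C\times C$. Lemma \ref{lem:bp14} identifies $\Nef(\bb P(E_n)) = \bigl\langle \xi - \mu_{\min}(E_n)\,f,\ f\bigr\rangle$ with $f$ a fiber class of $\rho$, and pulling back along $\iota$ yields the easy inequality $b_* \le -\mu_{\min}(E_n)/n$ for every admissible $n$. The matching lower bound $b_* \ge \limsup_n -\mu_{\min}(E_n)/n$ is the content of the quantitative linearization principle in Proposition \ref{prop:curveapprox}.(\ref{prop:curveapproxharder}), which I would invoke rather than reprove. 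This is the main technical step, since upgrading a destabilizing quotient of $E_n$ to an actual obstruction to nefness on $C\times C$ is not automatic and requires the asymptotic construction in the referenced proposition. Combining the two inequalities identifies $b_*$ with $\lim_n -\mu_{\min}(E_n)/n$.

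It then remains to compute the generic slope limit $\lim_n \mu(E_n)/n$ by Grothendieck--Riemann--Roch applied to $q$, with relative tangent class $(2-2g)f_1$ and with $c_1 = n(af_1 - \delta)$. Pushing forward the degree-$2$ piece of the GRR formula gives $\deg E_n = -n^2(a-1+g) + n(g-1)$; combining with the rank formula above,
\[
\lim_{n\to\infty}\frac{\mu(E_n)}{n} \;=\; -\,\frac{a-1+g}{a-1} \;=\; -\,\Bigl(1+\frac{g}{a-1}\Bigr) \;=\; -b_0,
\]
where $b_0 := 1+g/(a-1)$ is exactly the coefficient appearing in Question \ref{ques:main}. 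Since $\mu_{\min}(E_n)\le\mu(E_n)$ for every $n$ one has $b_* \ge b_0$ unconditionally, so $af_1 + b_0 f_2 - \delta$ is nef if and only if $b_* = b_0$, if and only if $\lim_n \mu_{\min}(E_n)/n = \lim_n \mu(E_n)/n$. This is the stated asymptotic semistability condition and proves (i). Part (ii) follows by the mirror argument: for $0 \le a < 1$ the class $\Delta - p^*L$ is relatively ample for $q$, and taking $T^{n-1}(-nL) = q_*\cal O\bigl(n(\Delta - p^*L)\bigr)$ in place of $E_n$, the analogous GRR computation yields $\lim_n \mu\bigl(T^{n-1}(-nL)\bigr)/n = -\bigl(-1 + g/(1-a)\bigr)$, after which the same slope-comparison argument concludes.
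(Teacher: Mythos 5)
Your proposal is correct and follows essentially the same route as the paper: both directions of the equivalence are delegated to Proposition \ref{prop:curveapprox}.(\ref{prop:curveapproxharder}) applied to the $q$-ample class $af_1-\delta$ (resp.\ $\delta-af_1$), and the problem is reduced to checking that $\frac 1n\mu\bigl(M^{(n-1)}(nL)\bigr)$ (resp.\ $\frac 1n\mu\bigl(T^{n-1}(-nL)\bigr)$) has the predicted limit. The only cosmetic difference is that you compute the generic slope by Grothendieck--Riemann--Roch along $q$, whereas the paper uses the filtration of the principal parts bundle $P^{n-1}\cal O(nL)$ (resp.\ of $\cal O(n\Delta)\vert_{n\Delta}$); the resulting degree and rank formulas agree.
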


\begin{proof}
	(i). Consider the $q$-ample class $af_1-\delta$. Since the class \eqref{eq:conjcurves}
	has self-intersection zero, it is nef if and only if 
	$\sup\bigl\{t\st af_1-tf_2-\delta\in\Nef(X)\bigr\}=-\bigl(1+\frac g{a-1}\bigr)$. 
	By Proposition
	\ref{prop:curveapprox}.(\ref{prop:curveapproxharder}), this holds if and only if
	$\frac{\mu_{\min}(M^{(n-1)}(nL))}n$ limits to $-1-\frac g{a-1}$.
	Recall that $L$ is a $\bb Q$-divisor on $C$ of degree $a$. 
	When computing the limit we restrict ourselves to $n$ such that $na\in\bb Z$.
	When $a>1$, for large divisible $n$ we have exact sequences 
	\[0\longrightarrow M^{(n-1)}(nL)\longrightarrow H^0\bigl(C,\cal O(nL)\bigr)\otimes\cal O_C\longrightarrow  P^{n-1}\cal O(nL)\longrightarrow
	0.\]
	Recall that if $\cal L$ is a line bundle, then $P^{n-1}\cal L$ denotes the bundle of principal parts $q_*(p^*\cal L\otimes\cal O_{n\Delta})$. 
	It is a rank $n$ vector bundle with a natural filtration with quotients 
	$\cal L$, $\cal L\otimes \omega_C$, \ldots, $\cal L\otimes \omega^{\otimes(n-1)}_C$. 
	From this, one computes
	\[
	\mu\bigl(M^{(n-1)}(nL)\bigr)=-n\biggl(1+\frac{ng}{na+1-g-n}\biggr).
	\]
	As $n$ grows, $\frac 1n\mu(M^{(n-1)}(nL))$ approaches $-\bigl(1+\frac{g}{a-1}\bigr)$.
	In particular, the nefness of $af_1+(1+\frac g{a-1})f_2-\delta$ is equivalent to the asymptotic semistability of $M^{(n-1)}(nL)$.
	\medskip
	
	(ii). Assume now $0\leq a<1$, and consider the $q$-ample class $-af_1+\delta$. 
	For large divisible $n$, pushing forward the exact sequence $0\to p^*\cal O(-nL)\to p^*\cal O(-nL)\otimes\cal O(n\Delta)\to p^*\cal O(-nL)\otimes\cal O(n\Delta)\vert_{n\Delta}\to 0$ by $q$, we obtain an exact sequence
	\[0\longrightarrow T^{n-1}(-nL)\longrightarrow q_*\bigl(p^*\cal O(-nL)\otimes\cal O(n\Delta)\vert_{n\Delta}\bigr)\longrightarrow H^1\bigl(C,\cal O(-nL)\bigr)\otimes\cal O_C\longrightarrow 0.\]
	From Riemann--Roch and by considering the surjections 
	\[q_*\bigl(p^*\cal O(-nL)\otimes\cal O(n\Delta)\vert_{(i+1)\Delta}\bigr)\twoheadrightarrow
	q_*\bigl(p^*\cal O(-nL)\otimes\cal O(n\Delta)\vert_{i\Delta}\bigr)\] whose kernels are isomorphic 
	to $\cal O(-nL)\otimes\omega_C^{\otimes(i-n)}$, one computes
	\[\frac 1n\mu\bigl(T^{n-1}(-nL)\bigr)=\frac{-n^2a-\binom{n+1}{2}(2g-2)}{n^2(1-a)+n(1-g)}.\]
	This limits to $-\bigl(\frac{g}{1-a}-1\bigr)$ as $n$ grows. 
\end{proof}

\begin{rmka}For $C$ an \emph{arbitrary} curve of genus $g$, \cite{EN18} prove that $M^{(k)}(L)$ is semi-stable if 
	$\deg L$ is exactly equal to $(k^2+2k+2)g+k$.
	This can be used to reprove the nefness of the divisors \eqref{eq:Vojta}
	when $b=1+\frac 1{k+1}$ with $k\geq 0$ an integer. 
\end{rmka}

\subsection{Nef classes for very general curves}
Our main result for very general curves constructs one optimal non-symmetric class, answering Question \ref{ques:main} in the affirmative for $a=2$.

\begin{thrm}\label{thrm:verygeneral2}
	Let $C$ be a very general smooth complex projective curve of genus $g\neq 2$. Then 
	\[2f_1+(1+g)f_2-\delta\in\Nef(C\times C).\]
\end{thrm}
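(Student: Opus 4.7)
The strategy, following Ross, is to degenerate $C$ to a rational curve $C_0$ with $g$ simple nodes in general position, and to reduce the nefness of $2f_1+(1+g)f_2-\delta$ to Proposition \ref{prop:p1xp1optimal} on the blow-up of $\mathbb{P}^1\times\mathbb{P}^1$ at $g$ symmetric pairs of points. Let $\pi\colon\mathcal{C}\to T$ be a one-parameter smoothing of $C_0$ over a smooth pointed curve $(T,0)$, chosen so that $\mathcal{C}$ is smooth and the generic fiber is a very general smooth projective curve of genus $g$. Form the relative self-product $\mathcal{X}=\mathcal{C}\times_T\mathcal{C}$, which acquires a three-dimensional ordinary double point at each of the $g^2$ pairs $(n_i,n_j)$ of nodes of $C_0$; resolve these to obtain a smooth family $\tilde{\pi}\colon\tilde{\mathcal{X}}\to T$. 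The divisor class $2f_1+(1+g)f_2-\delta$ extends to a line bundle $\mathcal{L}$ on $\tilde{\mathcal{X}}$ by taking closures of the fibers and of the diagonal.

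The heart of the argument is the computation of $\mathcal{L}|_{\tilde{X}_0}$ on the central fiber. The main component of $\tilde{X}_0$ is birationally the normalization $\mathbb{P}^1\times\mathbb{P}^1$ of $C_0\times C_0$, further blown up at the $2g$ symmetric points $(p_i,q_i)$ and $(q_i,p_i)$ which are the preimages of the nodes under normalization; call this surface $Y$. The remaining components of $\tilde{X}_0$ are exceptional divisors of the resolution of $\mathcal{X}$, on which $\mathcal{L}$ is easily seen to restrict to a nef class. On $Y$, the fiber classes $f_1,f_2$ specialize to the rulings $H_1,H_2$, while the limit of $\delta$ absorbs the strict transform of the diagonal of $\mathbb{P}^1\times\mathbb{P}^1$ (of class $H_1+H_2$) together with the $2g$ exceptional divisors $E_i,E_i'$ over the blown-up points, producing $\mathcal{L}|_Y=H_1+gH_2-\sum_{i=1}^g(E_i+E_i')$, a class of self-intersection zero. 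The nefness of this class—the content of Proposition \ref{prop:p1xp1optimal}—follows by checking nonnegative intersection with the generators of the Mori cone of $Y$: the two rulings, the exceptional curves, the strict transform of the diagonal, and strict transforms of $(1,n)$- and $(n,1)$-curves through subsets of the $2g$ points. Generic position of the nodes forces at most four of these points to lie on any $(1,1)$-curve, so the worst intersection comes from a symmetric $(1,1)$-curve through two symmetric pairs and equals $g-3$. The exclusion $g\neq 2$ is sharp here: for $g=2$ the unique symmetric $(1,1)$-curve through all four symmetric points gives intersection $-1$, consistent with the fact that every genus-$2$ curve is hyperelliptic and the theorem fails for hyperelliptic curves (see the discussion after Question \ref{ques:main}).

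Finally, semicontinuity in the family $\tilde{\pi}$ upgrades nefness on the central fiber to nefness on a very general fiber. Indeed, if $\mathcal{L}_t$ failed to be nef on $\tilde{\mathcal{X}}_t$ for $t$ in a dense subset of $T$, some component $H$ of the relative Hilbert scheme $\mathrm{Hilb}(\tilde{\mathcal{X}}/T)$ would dominate $T$ and parametrize curves of negative $\mathcal{L}$-degree; by properness of $H\to T$ and constancy of intersection numbers within $H$, specialization to $t=0$ would yield a curve in $\tilde{X}_0$ against which $\mathcal{L}|_{\tilde{X}_0}$ is negative, contradicting the previous step. Hence the locus of $t$ where $\mathcal{L}_t$ is not nef is contained in a countable union of proper closed subsets of $T$, and $\mathcal{L}$ is nef on $\tilde{\mathcal{X}}_t\cong C_t\times C_t$ for very general $t$. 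The main obstacle is the bookkeeping in writing down $\mathcal{L}|_{\tilde{X}_0}$—correctly tracking how $\delta$ degenerates onto the strict transform of the diagonal, the exceptional divisors of the symmetric blow-up, and the exceptional components over the ODPs at $(n_i,n_i)$—and the Mori-cone verification in Proposition \ref{prop:p1xp1optimal}.
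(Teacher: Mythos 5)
Your degeneration setup reproduces the paper's argument: specialize $C$ to a rational curve with $g$ general nodes, resolve the relative self-product, and compute that the limit of $2f_1+(1+g)f_2-\delta$ on the normalized main component is $\pi^*(f_1+gf_2)-\sum_{i=1}^g(E_i+E_i')$ on the blow-up of $\mathbb{P}^1\times\mathbb{P}^1$ at the $g$ symmetric pairs coming from the nodes; the identification of the restriction to the exceptional components as nef, the role of $g=2$ as the genuine exception, and the semicontinuity step are all as in the paper. (One small caveat: running the argument over a single pointed curve $(T,0)$ only yields nefness for very general fibers of that one family; to reach a very general point of $\mathcal{M}_g$ one must work over the full Hilbert scheme of stable curves, as in the paper's footnote.)

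The genuine gap is in your proof of the key input, Proposition \ref{prop:p1xp1optimal}. You assert that the Mori cone of the blow-up $Y$ of $\mathbb{P}^1\times\mathbb{P}^1$ at $2g$ general points is generated by the rulings, the exceptional curves, the diagonal, and strict transforms of $(1,n)$- and $(n,1)$-curves, and then check positivity only against that list. No such description of $\overline{NE}(Y)$ is available: already for $2g\geq 8$ points the surface is not del Pezzo, there are $(-1)$-curves of bidegree $(a,b)$ with $a,b\geq 2$, and for many points the structure of this cone is precisely the open SHGH/Nagata circle of problems that motivates the whole paper. So "the worst intersection is $g-3$" is not justified, and the step where it is needed most ($g$ large) is exactly where it fails. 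The paper avoids the Mori cone entirely: since the class $\pi^*(f_1+gf_2)-E$ has self-intersection $0$, it suffices to exhibit a single \emph{irreducible} effective curve in that class, which is then automatically nef. This is the content of the symmetric interpolation Lemma \ref{lem:symmetricinterpolation}: for $g\geq 3$ general symmetric pairs the linear system of $(1,g)$-curves through them has the expected dimension and its general member is irreducible and smooth (the nontrivial point being that each symmetric pair imposes two independent conditions on possibly symmetric divisors, proved via Cremona transformations on $\mathbb{P}^1\times\mathbb{P}^1$). To repair your argument you would need either this interpolation statement or some other device that sidesteps knowledge of $\overline{NE}(Y)$; checking against a conjectural list of extremal rays does not suffice.
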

\noindent When $g=2$, the class $2f_1+(1+g)f_2-\delta$ is not nef. It has negative intersection with the class $2f_1+2f_2-\delta$ of the graph of the hyperelliptic involution.

\begin{proof}
If $g=0$, then $2f_1+(1+g)f_2-\delta=f_1$ is nef (and not ample).
If $g=1$, then the class is on the boundary of the nef cone by Remark \ref{rmk:g=1}. We may assume then $g\geq 3$.
	The idea is to deform $C$ to a rational curve $C_0$ with $g$ simple nodes in general position. Since nefness is a very general condition in families, it is enough to prove a nefness statement for $C_0$.
The complication introduced by the nodes is that the positivity problem to be solved is on the blow-up of $\bb P^1\times\bb P^1$ at $2g$ points.
The construction comes from \cite{Ross}. We apply it to the non-symmetric situation.

Let $C_0$ be an irreducible rational curve with $g$ simple nodes in general position.
There exists a projective flat family $\cal C\to T$ over a disc $T$, relatively smooth with fibers of genus $g$ over the punctured disk, and with central fiber $C_0$. We may also assume that $\cal C$ has smooth total space. \footnote{$C_0$ is a stable nodal curve. Every stable curve of genus $g\geq 2$ embeds in $\bb P^{5g-6}$ with Hilbert polynomial depending only on $g$. 
	The set of points of the Hilbert scheme that correspond to stable embedded curves is denoted $H_g$.
	Let $Z_g\to H_g$ be the restriction of the universal family. \cite{DM69} prove that $H_g$ and $Z_g$ are smooth (even over ${\rm Spec}\bb Z$), and $H_g$ is irreducible over algebraically closed fields. Bertini arguments allow us to replace $Z_g\to H_g$ with $\cal C\to T$ as desired.} For $1\leq i\leq g$, denote by $x_i,y_i$ the preimages of each node in the normalization $\bb P^1$ of $C_0$. Let $L\subset \cal C$ be a section of $\cal C\to T$. It avoids the nodes of $C_0$. 

We would like to construct a Cartier divisor on $\cal C\times _T\cal C$ that restricts to the general fiber with class $2f_1+(1+g)f_2-\delta$. 
It is clear that for $f_1$ and $f_2$ we will use the pullbacks of $L$ by the two projections. However $\cal C\times_T\cal C$ is singular at the $g^2$ pairs of nodes, and the diagonal is not a Cartier divisor at the $g$ diagonal pairs. Instead we blow-up $\cal Y\to\cal C\times_T\cal C$ at the $g^2$ pairs of nodes. This resolves the singularities, in particular those along the diagonal. Let $\cal D\subset\cal Y$ be the strict transform of the diagonal.

For $t\neq 0$ in the disk $T$, the fiber $\cal Y_t=C_t\times C_t$ is the self-product of a genus $g$ curve. For $t=0$, the fiber $\cal Y_0$ has $g^2$ exceptional $\bb P^1\times\bb P^1$ components, and a component $F$, the strict transform of $C_0\times C_0$. Let $\nu\colon\widetilde F\to F$ be the normalization. As a variety, $\widetilde F$ is isomorphic to the blow-up of $\bb P^1\times\bb P^1$ at the $4g^2$ ordered pairs of points from the list $\{x_1,y_1,\ldots,x_g,y_g\}\subset\bb P^1$. See \cite[Lemma 3.1]{Ross} for the proofs. Denote the classes of the exceptional $\bb P^1$'s over the corresponding points by $e_{x_ix_j},e_{x_iy_j},e_{y_ix_j},e_{y_iy_j}$. Let $\pi$ be the blow-up of $\bb P^1\times\bb P^1$. 

Let $E$ be the sum of the $g$ exceptional $\bb P^1\times\bb P^1$'s sitting over diagonal pairs of nodes $(p,p)$. Over each of the $g$ components of $E$, the divisor $\cal D$ restricts with class $f_1$ in $N^1(\bb P^1\times\bb P^1)$, while $E$ restricts with class $-2f_1-2f_2$.
By \cite[Lemma 3.2]{Ross}, we have $\nu^*(\cal D|_{F})=\pi^*\Delta_{\bb P^1}-\sum_{i=1}^g(e_{x_ix_i}+e_{y_iy_i})$. Furthermore $\nu^*( E|_{F})=\sum_{i=1}^g(e_{x_ix_i}+e_{y_iy_i}+e_{x_iy_i}+e_{y_ix_i})$.
With $p$ and $q$ denoting the induced projections from $\cal Y$ on the factors of $\cal C\times_T\cal C$, consider on $\cal Y$ the Cartier divisor \[N\coloneqq p^*2L+q^*(1+g)L-(\cal D+E).\]
If we prove that its restriction $N|_{\cal Y_0}$ is nef, then the same holds
for the restriction to the very general fiber. Clearly for $t\neq 0$ the fiber restriction has class $2f_1+(1+g)f_2-\delta\in N^1(C_t\times C_t)$.

The restriction of $N$ to the exceptional $\bb P^1\times\bb P^1$ components has class $f_1+2f_2$, so it is even ample. On the other hand, the class of $\nu^*(N|_F)$ is 
\begin{align*}
  \pi^*\bigl(2f_1+(1+g)f_2\bigr)-\biggl(\pi^*\delta&-\sum_{i=1}^g(e_{x_ix_i}+e_{y_iy_i})\biggr)-\sum_{i=1}^g(e_{x_ix_i}+e_{y_iy_i}+e_{x_iy_i}+e_{y_ix_i})\\
  =\pi^*(f_1+gf_2)&-\sum_{i=1}^g(e_{x_iy_i}+e_{y_ix_i})\in N^1(\widetilde F).
\end{align*}
To settle the nefness of this class, it was enough to blow-up only the $2g$ points $(x_i,y_i)$ and $(y_i,x_i)$ with $1\leq i\leq g$ on $\bb P^1\times\bb P^1$.
The conclusion follows from the result below.
\end{proof}

\begin{prop}\label{prop:p1xp1optimal}
	Consider general points $z_1,\ldots,z_g\in\bb P^1\times\bb P^1$ with $g\neq 2$.
	Let $\pi\colon X\to\bb P^1\times\bb P^1$ be the blow-up of the $2g$ points $z_1,\ldots,z_g$ and their reflections $z_1',\ldots,z_g'$ across the diagonal. Denote by $E$ the exceptional divisor. Then \[\pi^*(f_1+gf_2)-E\in\Nef(X).\] 
	For all $g\geq 0$, the same nefness result holds if we blow-up $2g$ general points in $\bb P^1\times\bb P^1$.
\end{prop}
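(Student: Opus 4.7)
The plan is to show that the complete linear system $|\pi^{*}(f_1 + gf_2) - E|$ is base-point free on $X$, whence the class is semiample and in particular nef. Under pushforward by $\pi$, this system corresponds to the sublinear system
\[
\mathcal{L} = |\mathcal{O}_{\mathbb{P}^1 \times \mathbb{P}^1}(1,g) \otimes \mathcal{I}_Z|
\]
on $\mathbb{P}^1 \times \mathbb{P}^1$, where $Z$ collects the $2g$ blown-up points. Since $h^0(\mathcal{O}(1,g)) = 2g + 2$, we have $\dim \mathcal{L} \geq 1$, so $\mathcal{L}$ is at least a pencil. Any two linearly independent members $C_1, C_2 \in \mathcal{L}$ intersect in length $(f_1+gf_2)^2 = 2g$ by Bezout. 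If the $2g$ points in $Z$ are distinct (an open condition) and the base scheme of $\mathcal{L}$ has no divisorial component, then the base scheme is sandwiched between $Z$ (length $2g$) and $C_1 \cap C_2$ (length $2g$), and so coincides with $Z$ as a reduced scheme. Blowing up $Z$ then yields a base-point free linear system on $X$, giving a morphism to $\mathbb{P}^n$ realising $\pi^*(f_1+gf_2)-E$ as the pullback of a hyperplane.

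For the second assertion of the proposition ($2g$ unrestricted general points) the argument terminates immediately: distinctness and the absence of divisorial base components are both open conditions satisfied by a generic configuration.

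For the symmetric case one must work harder, since symmetric configurations form only a $2g$-dimensional subvariety of the $4g$-dimensional space of all $2g$-tuples. A potential divisorial base component $D \leq f_1 + gf_2$ has bidegree $(a,b)$ with $0 \leq a \leq 1$, $0 \leq b \leq g$, and $(a,b) \notin \{(0,0),(1,g)\}$. For such $D$ to sit in the base of $\mathcal{L}$, the residual system $|\mathcal{O}(1-a, g-b) \otimes \mathcal{I}_{Z \setminus D}|$ must still have positive dimension, forcing the bound $|Z \cap D| \geq 2g - (2-a)(g-b+1) + 2$. Case by case, this either requires coincidences among the $x$- or $y$-coordinates of the points of $Z$ that fail in the generic symmetric moduli, or else demands the existence of a bidegree-$(1, g-1)$ curve containing all of $Z$. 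In this last case, the $\sigma$-stability of $Z$ under the swap $\sigma \colon (x, y) \mapsto (y, x)$ gives $Z \subseteq D \cap \sigma(D)$, where $\sigma(D)$ has bidegree $(g-1, 1)$. For $g \neq 2$ the two bidegrees differ, so $D \neq \sigma(D)$ and $|D \cap \sigma(D)| = 1 + (g-1)^2$ by Bezout; a dimension count on pairs $(D, (z_1, \dots, z_g))$ with $z_i \in D \cap \sigma(D)$ then shows the locus of bad symmetric configurations has dimension at most $2g - 1$, hence forms a proper subvariety of the symmetric moduli.

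The main obstacle will be executing this case analysis of divisorial base components, particularly the boundary case $(a,b) = (1, g-1)$. The exclusion $g \neq 2$ is essential here: for $g = 2$ both bidegrees $(1, g-1)$ and $(g-1, 1)$ equal $(1, 1)$, the $\sigma$-invariant $(1,1)$-curves form a $2$-dimensional family, and each generic symmetric configuration lies on the graph of a unique M\"obius involution sending $a_i \leftrightarrow b_i$; this graph becomes a genuine $1$-dimensional base component of $\mathcal{L}$ and obstructs nefness, as the proposition's exclusion of $g = 2$ anticipates.
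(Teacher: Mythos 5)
Your strategy (show the system $\lvert\pi^*(f_1+gf_2)-E\rvert$ is base-point free, hence semiample) is genuinely different from the paper's, which only exhibits a single irreducible member of self-intersection $0$ (Step 3 of the paper's proof, via Lemma \ref{lem:symmetricinterpolation}.(\ref{lem:symmetricinterpolationiii})) and handles the non-symmetric case by Cremona reduction to $\pi^*f_1$. Your sandwiching argument (base scheme trapped between $Z$ and a length-$2g$ complete intersection) is clean, and your diagnosis of the $g=2$ failure agrees with the paper's. But as written the proposal has two real gaps.

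First, the case analysis of divisorial base components is not merely "the main obstacle" --- it \emph{is} the theorem, and it is exactly the content of the paper's symmetric interpolation Lemma \ref{lem:symmetricinterpolation}, which occupies most of the section and is proved by an induction built on a Cremona transform. Your necessary condition $\lvert Z\cap D\rvert\geq 2g-(2-a)(g-b+1)+2$ is derived by assuming the points of $Z\setminus D$ impose independent conditions on the residual system $\lvert\mathcal O(1-a,g-b)\rvert$; for a symmetric configuration this is itself a nontrivial interpolation statement (note that $Z\setminus D$ is generally a union of symmetric pairs \emph{plus unpaired reflections}, so one needs the strength of Lemma \ref{lem:symmetricinterpolation}.(\ref{lem:symmetricinterpolationii}), not just generic-point interpolation). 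Without proving these statements the dichotomy "either $D$ contains many points of $Z$, or the residual points fail interpolation" is not closed off, and the argument is circular.

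Second, in the one case you do sketch in detail, $(a,b)=(1,g-1)$, the Bezout count $\lvert D\cap\sigma(D)\rvert=1+(g-1)^2$ presupposes that $D$ and $\sigma(D)$ share no component. This fails for reducible $D$: e.g.\ $D$ may contain a $\sigma$-invariant curve of class $f_1+f_2$ plus $g-2$ fibers, or a fiber of $pr_1$ together with its reflected fiber of $pr_2$, and then $D\cap\sigma(D)$ contains a curve, so the fiber of your incidence variety over such $D$ is positive-dimensional. These strata can in fact be bounded (the locus of symmetric configurations lying on a symmetric $(1,1)$-curve has dimension $g+2<2g$ for $g\geq 3$, and the fiber-plus-fibers stratum contributes at most $2g-1$), but they must be enumerated and counted separately; your count as stated only covers the stratum where $D\cap\sigma(D)$ is finite. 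Also note that for $g=3$ one has $1+(g-1)^2=5<2g=6$, so there the conclusion is immediate from Bezout, whereas for $g\geq4$ the dimension count genuinely carries the weight --- worth separating in a write-up.
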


\begin{proof}	
	\texttt{Step 1. The case $g\in\{0,1\}$}.
	The case $g=0$ is trivial. When $g=1$, then $\pi^*(f_1+f_2)-E$ is represented by $\overline F_1+\overline F_2$, where $\overline F_1$ is the strict transform of the fiber of the first projection through $z_1$, and $\overline F_2$ is the strict transform of the fiber of the second projection through $z_1'$. We have $\overline F_1^2=\overline F_2^2=-1$. We may assume that $z_1$ is not on the diagonal, hence $\overline F_1\cdot \overline F_2=1$. In particular $\overline F_1+\overline F_2$ has nonnegative (in fact 0) intersection with each of its irreducible components, hence it is nef.

	\texttt{Step 2. The failure of the case $g=2$.} $\bb P^2$ can be identified with the second symmetric power of $\bb P^1$. The sum map $\sigma\colon\bb P^1\times\bb P^1\to\bb P^2$ given by $\sigma(x,y)=x+y$ is a cover of degree 2 and $\sigma^*\cal O_{\bb P^2}(1)=\cal O(1,1)$. 
	The line through $\sigma(z_1)$ and $\sigma(z_2)$ lifts to a plane section of $\bb P^1\times\bb P^1$ through $z_1,z_1',z_2,z_2'$ in the Segre embedding in $\bb P^3$. For general $z_1,z_2$, this section is smooth irreducible. Denote it $D$. Its strict transform $\overline D$ has class $\pi^*(f_1+f_2)-E$ and has intersection $-1$ with $\pi^*(f_1+2f_2)-E$, so the latter is not nef. 
	The curve $\overline D$ is the base locus of the linear system determined by $\pi^*\cal O(1,2)(-E)$.

	\texttt{Step 3. Conclusion of symmetric case.} Assume $g\geq 3$. By Lemma \ref{lem:symmetricinterpolation}.(\ref{lem:symmetricinterpolationiii}), there exists a smooth curve $C_g$ through the $g$ general pairs. In particular it has multiplicity 1 at each point. Its strict transform $\overline C\subset X$ is a curve of class $\pi^*(f_1+gf_2)-E$, which has self intersection zero. Since it is also irreducible, it is nef.
		
	\texttt{Step 4. Conclusion of general case.}
	Assume that the $2g$ points $Z=\{z_1,z_2,\ldots,z_{2g}\}$ are general (including the case $g=2$). Consider the non-symmetric Cremona transform described in the last paragraph of part 3 in the proof of Lemma \ref{lem:symmetricinterpolation} below. Applying it at the points $z_1,z_2$, then at the images of $z_3,z_4$, and so on, reduces $\pi^*(f_1+gf_2)-E$ to $\pi^*f_1$. By generality, for all $1\leq i\leq g$, the images of $z_{2i-1},z_{2i}$ through any composition of the Cremona transforms above are never in the same vertical or horizontal fiber on $\bb P^1\times\bb P^1$. 
\end{proof}

\begin{lem}[Symmetric interpolation]\label{lem:symmetricinterpolation}
	Let $Z=\{z_1,z_1',\ldots,z_m,z_m'\}$ be a set of $m$ general symmetric pairs in $\bb P^1\times\bb P^1$. 
	\begin{enumerate}[\normalfont(i)]
\item\label{lem:symmetricinterpolationi} If $(n,m)\neq(1,2)$, then the
	linear system of sections of $\cal O(1,n)$ through 
	$Z$ has the expected dimension.
\item\label{lem:symmetricinterpolationii} If $(n,m)\neq(1,2)$ and $r$ is a nonnegative integer, then the linear system of sections of $\cal O(1,n)$ through $Z$ and $r$ further general points has the expected dimension.
\item\label{lem:symmetricinterpolationiii} If $(n,m)\neq(2,2)$, and the linear system in {\normalfont(\ref{lem:symmetricinterpolationi})} is nonempty, then the general divisor in this system is irreducible and smooth.
\end{enumerate}
\end{lem}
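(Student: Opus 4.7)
For part (i), I write a section of $\cal O(1,n)$ in affine coordinates as $s(x,y)=a(y)+x\,b(y)$ with $a,b\in\bb C[y]_{\leq n}$, giving a $(2n+2)$-dimensional space. Vanishing at a symmetric pair $(u,v),(v,u)$ imposes the two linear conditions
\[ a(v)+u\,b(v)=0,\qquad a(u)+v\,b(u)=0 \]
in the $2n+2$ coefficients of $(a,b)$. My plan is to assemble the corresponding $2m\times(2n+2)$ matrix and show it has maximal rank $\min(2m,2n+2)$ for generic $(u_i,v_i)$ by specializing to a convenient configuration (for instance pairs $(t_i,0),(0,t_i)$ with distinct $t_i$, where the matrix has a block-Vandermonde structure) and invoking semicontinuity. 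The failure at $(n,m)=(1,2)$ is transparent in this setup: for $n=1$, subtracting the two equations at one pair yields $(v-u)(a_1-b_0)=0$, a single antisymmetric relation \emph{independent} of the pair, so the total rank is bounded by $m+1$ rather than $2m$.

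Part (ii) follows from (i) via the standard fact that general points impose independent conditions on any nontrivial linear system; formally, one applies a semicontinuity / incidence correspondence argument on the parameter space of $m$ symmetric pairs together with $r$ further general points, reducing to the case $r=0$ handled in (i).

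For part (iii), I would apply Bertini in two steps. First, smoothness: from (ii) with one added general point, the system from (i) is base-point-free outside $Z$, so Bertini yields smoothness of a general member off $Z$; smoothness at each $z_i$ is verified by using (ii) with a first-order (tangent) condition at $z_i$ to produce a member transverse to any prescribed tangent direction there. Second, irreducibility: a reducible divisor of class $(1,n)$ splits as $(1,k)+(0,n-k)$ with $0\leq k<n$, whose second component is a union of $n-k$ horizontal fibers $\{y=\alpha_j\}$. Because the $2m$ marked points come in symmetric pairs, requiring each pair to be absorbed by such a decomposition couples the $x$- and $y$-coordinates of the pair; a direct count across $k$ and $m$ rules this out for generic configurations except precisely when $(n,m)=(2,2)$, where the reducible pencils discussed in Step 2 of the proof of Proposition \ref{prop:p1xp1optimal} account for the exception.

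The principal obstacle I anticipate is the combinatorial case analysis in (iii). The boundary cases $k=0$, $k=n-1$, and small $m$ are where the symmetry of the pair configuration can conspire with the asymmetry of a bidegree decomposition to produce reducible members, and these subcases each require individual verification in order to pin down $\{(2,2)\}$ as the sharp exceptional set.
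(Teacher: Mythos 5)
Your outline for part (\ref{lem:symmetricinterpolationi}) has a concrete gap: the specialization you propose does not have maximal rank. Writing a section as $a(y)+xb(y)$ and placing the pairs at $(t_i,0)$ and $(0,t_i)$, the conditions become $a(t_i)=0$ (at most $n+1$ independent conditions) together with $a_0+t_ib_0=0$; the latter $m$ functionals all lie in the $2$-dimensional span of the coefficients $a_0,b_0$, so for $m\geq 3$ the total rank is at most $m+2<2m$ (indeed at most $n+3$), and semicontinuity then yields nothing. This is not an incidental misfortune: the whole difficulty of the symmetric interpolation problem is that the $x$-coordinates of the $2m$ points are forced to be a permutation of their $y$-coordinates, so the matrix never has a genuine block-Vandermonde structure, and most "convenient" degenerations (points on two fibers, points on the diagonal, etc.) collapse the rank. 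To salvage your approach you would have to exhibit a specific symmetric configuration and actually verify maximal rank, which amounts to showing that the map $(c_k)\mapsto\bigl(\sum_k c_ky_{\sigma(k)}y_k^j\bigr)_{0\le j\le n}$ is injective on the kernel of the moment conditions $\sum_k c_ky_k^j=0$, where $\sigma$ is the involution swapping the two points of each pair; that is the real content and it is missing. The paper avoids this by inducting on $m$: if $z_m,z_m'$ fail to impose two new conditions, then every member of $\frak b_{m-1}$ through a general point of one of its components also contains the reflected point, forcing the member to contain a symmetric $(1,1)$-curve through all $m$ pairs, which is impossible for $m\geq 3$ by the explicitly settled case $n=1$.

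Part (\ref{lem:symmetricinterpolationii}) is fine as stated. For part (\ref{lem:symmetricinterpolationiii}), your two-step Bertini plan for smoothness is unnecessary and harder to complete than the paper's observation: an irreducible curve of class $f_1+nf_2$ has intersection $1$ with $f_2$, hence is mapped isomorphically to $\bb P^1$ by the second projection and is automatically smooth, so only irreducibility needs proof. For irreducibility your decomposition $(1,k)+(0,n-k)$ is the right starting point, but the case analysis you defer is exactly where the argument lives; the paper short-circuits it by reducing to $m=n$ (since $\frak b_n\subset\frak b_{m}$ for general configurations... more precisely $\frak b_m\supset\frak b_n$ when the first $m$ pairs of $n$ general pairs are used) and then counting: a reducible member of $\frak b_n$ must be $C_{n-1}+F$ with $C_{n-1}$ through $2n-1$ of the points and $F$ through the remaining one, giving at most $2n$ such members, contradicting $\dim\frak b_n=1$. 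I recommend adopting that counting argument rather than attempting the full $(k,m)$ case analysis.
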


\begin{proof}
	
 Denote by $\frak b_m(n)$ the linear system in question. When no confusion is likely, we omit $n$ and denote $\frak b_m(n)=\frak b_m$.
 
 \texttt{1. Continuous variation of $\frak b_m(n)$.} For fixed $n$ and $m$, we show that the linear systems $\frak b_m$ vary continuously for general $Z$.
  The parameter space $\cal Z_m$ of ordered $m$-tuples of ordered symmetric pairs of points in $\bb P^1\times\bb P^1$ is isomorphic to $(\bb P^1\times\bb P^1)^m$. 
 Let $\cal U_m\subset\cal Z_m\times(\bb P^1\times\bb P^1)$ be the universal family. 
 The general linear systems $\frak b_m$ are captured by the general fibers of the sheaf $pr_{1*}(pr_2^*\cal O(1,n)\otimes\cal I_{\cal U_m})$ (or of its projectivization). Here $\cal I_{\cal U_m}$ is the ideal sheaf of $\cal U_m$.
 In particular, the dimension of the general $\frak b_m$ is constant, depending only on $m$. By considering the ranks of subsheaves $pr_{1*}(pr_2^*\cal O(1,a)\otimes\cal I_{\cal U_m})$ and $pr_{1*}(pr_2^*\cal O(0,a)\otimes\cal I_{\cal U_m})$ for $a\leq n$, one shows that the divisorial components of the base loci of $\frak b_m$ also vary continuously for general $Z$.
 
 \texttt{2. The cases $n=0$ and $n=1$.} 
 If $n=0$, then the projective dimension of $\frak b_m$ is 1 for $m=0$, and 0 for all $m>0$. One vertical fiber cannot contain a general symmetric pair.
  If $n=1$, then $\frak b_0$ consists of plane sections of the Segre embedding $\bb P^1\times\bb P^1\subset\bb P^3$. It has projective dimension 3.
 As in Step 2 of Proposition \ref{prop:p1xp1optimal}, for $Z=\{z_1,z_1'\}$, the system $\frak b_1$ is the pullback by $\sigma$ of the pencil of lines in $\bb P^2$ through $\sigma(z_1)=\sigma(z_1')$. In particular $\dim\frak b_1=1$.
 When $m=2$, then $\frak b_2$ is (unexpectedly) one point, corresponding to the pullback of the line through $\sigma(z_1)$ and $\sigma(z_2)$. 
 For $m>2$, the system $\frak b_m$ is empty as expected.
 
 For general choices of $Z$, the divisors constructed above are irreducible.
 
 \texttt{3. A Cremona transform on $\bb P^1\times\bb P^1$.}
 Let $z\in\bb P^1\times\bb P^1$ be a point, not on the diagonal, and let $z'\neq z$ be its reflection. Let $\rho\colon\bb P^2\dashrightarrow\bb P^1$ be the projection from $\sigma(z)\in\bb P^2$, where $\sigma\colon\bb P^1\times\bb P^1\to\bb P^2$ is the quotient map, identified with the sum map to ${\rm Sym}^2\bb P^1=\bb P^2$. Let $pr_2\colon\bb P^1\times\bb P^1\to\bb P^1$ be the second projection. Consider the rational map
 \begin{align*}
 	Cr&\colon\bb P^1\times\bb P^1\dashrightarrow\bb P^1\times\bb P^1\\
 Cr&=(\rho\circ\sigma,pr_2)
 \end{align*}
 We study some of its properties:
 \begin{enumerate}[(1)]
 	\item $Cr$ is undefined at $z$ and $z'$. Indeed $\rho$ is only undefined at $\sigma(z)=\sigma(z')$.
 	\item If $C$ is a section of $\cal O(1,1)$ through $z,z'$, then $\rho\circ\sigma$ is constant on $C\setminus\{z,z'\}$. For this, note that $C=\sigma^{-1}L$, where $L$ is a line through $\sigma(z)$.
 	\item In particular $Cr$ contracts the 2 fibers of $pr_2$ that pass through $z$ and $z'$ respectively. Clearly $pr_2$ contracts them. 
 	Let $F_{2,z}$ be the corresponding fiber of $pr_2$, and let $F_{1,z'}$ be the fiber of $pr_1$ through $z'$. Then $F_{1,z'}+F_{2,z}$ is a section of $\cal O(1,1)$ through $z,z'$, hence $\rho\circ\sigma$ is constant on it. In particular it is constant on $F_{2,z}$. (Note that $Cr$ does not also contract $F_{1,z'}$ since $pr_2$ does not contract it.)
 	\item If $x\in\bb P^1\times\bb P^1$ is any point different from $z$ and $z'$, then $Cr(x)$ and $Cr(x')$ are in the same fiber of $pr_1$. This is because $z,z',x,x'$ are contained in some section of $\cal O(1,1)$.
 	\item $Cr$ is birational. For general $x\in\bb P^1\times\bb P^1$, $\rho(\sigma(x))$ determines the section of $\cal O(1,1)$ that passes through $z$,$z'$, and through $x$, while $pr_2(x)$ determines the fiber $F_{2,x}$. Clearly the section and the fiber meet in one point unless the fiber is a component of the section, which is not the general situation.
 \end{enumerate}
 
 Finally we resolve $Cr$. Let $\pi\colon X\to\bb P^1\times\bb P^1$ be the blow-up of $z$ and $z'$ with exceptional divisors $E$ and $E'$. Contracting the strict transforms $F$ of $F_{2,z}$ and $F'$ of $F_{2,z'}$, gives a morphism $\gamma\colon X\to\bb P^1\times\bb P^1$ which is also the blow-up of two points. We have $Cr=\gamma\circ\pi^{-1}$.
 
 Because of the two blow-up structures of $X$, the N\' eron--Severi space $N^1(X)$ has two sets of bases $(\pi^*f_1,\pi^*f_2,E,E')$ and $(\gamma^*f_1,\gamma^*f_2,F,F')$. The following relations are easy consequences of the properties of $Cr$.
 \begin{align*}
 	\pi^*f_2&=E+F=E'+F'\\
 	\gamma^*f_2&=E+F=E'+F'\\
 	\gamma^*f_1&=\pi^*(f_1+f_2)-E-E' 
 \end{align*}	
In particular, the change of coordinates matrix is
\[
  \begin{pmatrix}1&0&0&0\\ 1&1&1&1\\ -1&0&-1&0\\ -1&0&0&-1\end{pmatrix}.
\]
The matrix is self-inverse, though $Cr$ is not immediately self-inverse because the source $\bb P^1\times\bb P^1$ and the target $\bb P^1\times\bb P^1$ are not canonically identified. 

The construction of $Cr$ also works in a less-symmetric situation. If $z_1,z_2$ are points not on the same horizontal or vertical fiber, then blowing-up the points and contracting the strict transforms of vertical (or of horizontal) fibers through the points gives birational $Cr\colon\bb P^1\times\bb P^1\dashrightarrow\bb P^1\times\bb P^1$. In fact one can find an automorphism that fixes $z_1$ and sends $z_2$ to $z_1'$.
When the two points are say in the same vertical fiber, and we contract the strict transforms of horizontal fibers, then the target of $Cr$ is naturally the Hirzebruch surface $\bb F_2=\bb P_{\bb P^1}(\cal O\oplus\cal O(-2))$, not $\bb P^1\times\bb P^1$. 
  
 \texttt{4. The cases $m\in\{0,1,2\}$.}
 Assume $n>1$. 
 The complete linear system $\frak b_0$ has the expected dimension $2n+1$ and irreducible general term.
 For $m\in\{1,2\}$, we perform a Cremona transform $Cr$ on $\bb P^1\times\bb P^1$ centered at $z_1,z_1'$.
  The linear system $\frak b_1$ corresponds to sections of $\pi^*\cal O(1,n)(-E-E')=\gamma^*\cal O(1,n-1)$ on $X$, so to sections of $\cal O(1,n-1)$. This gives the expected dimension of $\frak b_1$, and the irreduciblity of a general divisor.
 The linear system $\frak b_2$ similarly corresponds to sections of $\cal O(1,n-1)$ through $Cr(z_2)$ and $Cr(z_2')$. These two points live on the same vertical fiber. When $n=2$, the sections of $\cal O(1,n-1)=\cal O(1,1)$ through $Cr(z_2),Cr(z_2')$ all contain the vertical fiber through the two points, and an arbitrary horizontal fiber, giving $\dim\frak b_2=1$ as expected. We also see how irreducibility failed in this case. When $n>2$, a general section of $\cal O(1,n-1)$ intersects the vertical fiber containing the two points in $n-1\geq 2$ distinct points. Using the ${\rm PGL}(2)$ action on the second component, we can arrange that one of these points is $Cr(z_2)$, but none of the others is $Cr(z_2')$, and vice versa. Thus $\dim\frak b_2=2n-3$ as expected. By a similar construction, we can see that there exist irreducible sections of $\cal O(1,n-1)$ through $Cr(z_2)$ and $Cr(z_2')$, and avoiding the indeterminacy points of $Cr^{-1}$, hence this is the general situation.
 
 \texttt{5. Conclusion of part {\normalfont(\ref{lem:symmetricinterpolationi})}.}
 Assume $n>1$ and $m\geq 3$.
  For fixed $3\leq m\leq n+1$, assume $\frak b_{m-1}$ has the expected dimension $2(n-m+1)+1\geq 1$, but $\dim\frak b_m=2(n-m+1)$ for every general choice of $Z$. This is indeed the only choice other than the expected dimension, easily verified by picking $z_m$ outside the base locus of $\frak b_{m-1}$.
  For any such $z_m$ sufficiently general (so that $\dim\frak b_m=2(n-m+1)$ for example), consider $T\in\frak b_{m-1}$ passing through $z_m$. Let $C$ be an irreducible curve in the support of $T$ that passes through $z_m$. 
  
 By our assumption that $\dim\frak b_m=2(n-m+1)$, for all general $w$ in $C$, by replacing $z_m$ with $w$, it holds that $T$ also passes through $w'$. Then $T$ contains $C$, but also its reflection $C'$.
 If $C$ is symmetric, then it has class $cf_1+cf_2$ for some $c\geq 1$. Since $T$ has class $f_1+nf_2$, then necessarily $c=1$. 
 If $C$ is not symmetric, then similarly $C+C'$ has class $f_1+f_2$, so $C$ is a vertical or horizontal fiber. In both cases, denote $\widetilde C=C\cup C'$ (set theoretic union). It is a reduced effective symmetric cycle of class $f_1+f_2$, a section of $\cal O(1,1)$ through $z_m$ and $z_m'$. We can write $T=\widetilde C+F_{n-1}$, where $F_{n-1}$ is a sum of $n-1$ horizontal fibers (each of class $f_2$). This is an equality of cycles, and so $\widetilde C$ is uniquely determined by $T$.
 Such a decomposition exists for all sufficiently general choices of the $m$ pairs. Permuting the pairs will also produce a general ordered $m$-tuple of pairs, without changing $\frak b_m$. In particular, since $\widetilde C$ passes through $z_m$ and $z_m'$, it passes through all the $m$ pairs. This is impossible for $m\geq 3$ general pairs by the case $n=1$. 
 
 Since $\frak b_{n+1}$ is empty, so are $\frak b_m$ for all $m>n$.
 
 \texttt{6. Conclusion of part {\normalfont(\ref{lem:symmetricinterpolationii})}.}
  By (\ref{lem:symmetricinterpolationi}), the system $\frak b_m$ has the expected dimension. For any nonempty linear system, passing through one general point (e.g., not in the base locus) is a codimension 1 condition. By iterating, we obtain the claim.
 
 \texttt{7. Conclusion of part {\normalfont(\ref{lem:symmetricinterpolationiii})}.}
  In all cases where irreducibility holds, smoothness is automatic.
 Indeed any irreducible curve $C$ of class $f_1+nf_2$ satisfies $C\cdot f_2=1$,
 hence it is mapped isomorphically by the second projection onto $\bb P^1$.
 
 We assume $n>1$ and $m\geq 3$. By part (\ref{lem:symmetricinterpolationi}), $\frak b_m$ is nonempty precisely when $m\leq n$. In this case its dimension is $2(n-m)+1$.
 To prove the irreducibility of the general member of $\frak b_m$, it is enough to prove that $\frak b_m$ contains one irreducible curve. Since the first $m-1$ pairs in a set of $m$ sufficiently general pairs are also general, we have $\frak b_{m-1}\supset\frak b_{m}$. It is then enough to prove that $\frak b_n$ has an irreducible curve. 
 
 If every $C_n\in\frak b_n$ is reducible, it is necessarily of form $C_n=C_{n-1}+F$, where $C_{n-1}$ is a (potentially reducible) section of $\cal O(1,n-1)$, and $F$ is a fiber of the second projection.
 By part (\ref{lem:symmetricinterpolationii}), the section $C_{n-1}$ contains at most $2n-1$ of the points of $Z$, while $F$ contains at most one point in $Z$. Since $C_n$ passes through all of $Z$, the bounds must be sharp. By part (\ref{lem:symmetricinterpolationii}), for every $z\in Z$, there is exactly one section of $\cal O(1,n-1)$ that contains $Z\setminus\{z\}$. Clearly there exists exactly one $F$ through $z$. 
 There are then at most $2n$ choices for $C_n\in\frak b_n$.
 This contradicts the equality $\dim\frak b_n=1$ from (\ref{lem:symmetricinterpolationi}). 
 \end{proof}

\begin{rmka}[Blow-ups of $\bb P^2$]Let $g\geq 1$ and let $\pi\colon X\to\bb P^2$ be the blow-up of $2g$ general points $z_1,\ldots,z_{2g}$ with exceptional divisors $E_1,\ldots,E_{2g}$ respectively. Let $H$ be the class of a line in $\bb P^2$. Then
	\[\pi^*gH-(g-1)E_1-E_2-\cdots-E_{2g}\in\Nef(X).\]
Indeed this class can be reduced by a sequence of Cremona transforms
to $\pi^*H-E_1$. This result is equivalent to the general case of Proposition \ref{prop:p1xp1optimal} via the isomorphism between $\bb P^2$ blown-up at $2g+1$ points $z_0,\ldots,z_{2g}$ and $\bb P^1\times\bb P^1$ blown-up at $2g$ points. The exceptional divisor $E_0$ over $z_0$ is considered with coefficient 0 in the class above, so it can be blown-down.

\end{rmka}

We now construct nef classes on $C\times C$ by degenerating from simple covers of $\bb P^1$. This follows an idea of \cite{kouvidakis}.
Recall that a finite map $f\colon C\to\bb P^1$ is called a \emph{simple branched cover}
if any fiber of $f$ has at most one ramification point $c$, and if $f$ is given locally
around any such $c$ by the map $x\mapsto x^2$.
For example hyperelliptic pencils are simple. 

\begin{ex}[Simple branched covers]Let $C$ be a curve of genus $g\geq 1$.
	Assume that $C$ admits a simple branched cover $f\colon C\to\bb P^1$ 
	of degree $2\leq d\leq\lfloor\sqrt g\rfloor+1$. 
	\[\text{If }a,b\geq d\text{, then }af_1+bf_2-\delta\text{ is nef if and only if }a+b\geq\frac{2g}{d-1}+2\]
	(Following \cite{kouvidakis} and \cite[Theorem 1.5.8]{laz04}, consider $T$ the closure of the complement
	of the diagonal in $C\times_{\bb P^1}C$. It is irreducible of class $df_1+df_2-\delta$ in $C\times C$.
	Its self-intersection is $2\cdot((d-1)^2-g)\leq 0$. For example if $f$ is a hyperelliptic pencil, then $T$ is the graph of the induced hyperelliptic involution.
	For $A,B,C\geq 0$,
	\[(A+dC)f_1+(B+dC)f_2-C\delta=Af_1+Bf_2+C[T]\]
	is nef if and only if the intersection with $T$ is nonnegative, i.e.,
	$(d-1)(A+dC+B+dC-2C)\geq 2g C$.
	If $C>0$, after setting $a=\frac AC+d$ and $b=\frac BC+d$, we obtain the claim.)
	
	When $d>\lfloor\sqrt g\rfloor+1$, and $a,b\geq d$, the class $af_1+bf_2-\delta$ is ample.
	\qed
\end{ex}

We obtain the following extension of the result of Kouvidakis \cite[Theorem 2]{kouvidakis}.

\begin{cor}\label{cor:genKouvidakis}Let $C$ be a \emph{very general} curve of genus $g\geq 1$.
	If $2\leq d\leq \lfloor\sqrt g\rfloor+1$ is an integer and $a,b\geq d$ 
	satisfy $a+b\geq2+\frac{2g}{d-1}$, then $af_1+bf_2-\delta$ is nef.
	In particular
	\begin{equation}\label{eq:genKouvidakis}
		d\:\!f_1 + \biggl( 2 + \frac{2g}{d-1}-d \biggr) f_2 - \delta \in
		\operatorname{Nef}(C \times C)
	\end{equation}
\end{cor}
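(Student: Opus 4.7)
The plan is to combine the preceding example with a degeneration argument of the type used in the proof of Theorem \ref{thrm:verygeneral2}. By that example, any smooth curve $C_0$ of genus $g$ admitting a simple branched cover of degree $d$ satisfies the claim: $af_1+bf_2-\delta$ is nef on $C_0 \times C_0$ whenever $a,b \ge d$ and $a+b \ge 2 + \frac{2g}{d-1}$. So the task reduces to transferring nefness from such a special curve to a very general one.

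For any $g \ge 1$ and $d \ge 2$, there exist smooth projective curves of genus $g$ admitting a simple branched cover of degree $d$: by Riemann--Hurwitz such a cover has $2g+2d-2$ branch points, and the corresponding Hurwitz space is classically nonempty. Fix one such $C_0$ and choose a smooth projective family $\pi\colon\cal C\to T$ of genus $g$ curves with a distinguished fiber $C_{t_0}=C_0$, arranged so that the moduli map $T\to M_g$ has open image around $[C_0]$ (for instance, take $T$ to be an étale neighborhood of $[C_0]$ in $M_g$ after suitable shrinking). Then the very general fiber of $\pi$ corresponds to a very general point of $M_g$.

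On $\cal C\times_T\cal C$, for each rational pair $(a,b)$ in the given region, consider a relative line bundle class $\cal L_{a,b}$ restricting to $af_1+bf_2-\delta$ on each fiber; its restriction over $t_0$ is nef by the example. The semicontinuity principle for nefness invoked in Theorem \ref{thrm:verygeneral2} then applies: for each component $H_i$ of the relative Hilbert scheme of effective cycles with fiberwise class $\gamma_i$ satisfying $\cal L_{a,b}\cdot\gamma_i<0$, the image of the proper map $H_i\to T$ is closed in $T$ and cannot contain $t_0$ (else $\cal L_{a,b}\vert_{C_0\times C_0}$ would fail to be nef), so it is a proper closed subset. The complement of the countable union of these images, taken over the countably many rational $(a,b)$ in the allowed region and the finitely many integers $d\in[2,\lfloor\sqrt g\rfloor+1]$, is very general in $T$; on the corresponding fibers all required classes are nef, and closedness of the nef cone extends this to all real $(a,b)$ in the region. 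The displayed class \eqref{eq:genKouvidakis} is the special case $a=d$, $b=2+\frac{2g}{d-1}-d$, for which $b\ge d$ is equivalent to $(d-1)^2\le g$, i.e.\ $d\le\lfloor\sqrt g\rfloor+1$ for integer $d$. The only subtle point is the semicontinuity step, which is routine once the example is in hand.
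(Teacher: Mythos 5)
Your proposal is correct and follows essentially the same route as the paper: exhibit a genus-$g$ curve with a simple branched cover of degree $d$ (the paper invokes the Riemann existence theorem), apply the preceding example to get nefness there, and then use that nefness is a very general condition in families to propagate to very general $C$, with the displayed class being the case $a=d$ (where $b\geq d$ amounts to $(d-1)^2\leq g$). Your added detail on the relative Hilbert scheme semicontinuity argument is just an expansion of the paper's one-line appeal to this standard fact.
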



\begin{proof}By the Riemann existence theorem, 
	for any degree $d\geq 2$ there exists a curve 
	$C_d$ of genus $g$ admitting a simple branched cover $C_d\to\bb P^1$ of degree $d$. 
	From the previous example we deduce that $af_1+bf_2-\delta$ is nef. 
	Since nefness is a very general condition in families, the result extends to very general curves.
	For the last statement set $a=d$ and note that $2 + \frac{2g}{d-1}-d\geq d$. 
\end{proof}

\begin{rmka}
	When $C$ is \emph{very general}, the nefness of the classes in Theorem \ref{thrm:prodcurvescgeneral}.(\ref{thrm:prodcurvescgenerali}) can be deduced from Corollary \ref{cor:genKouvidakis}. 
	Some of the classes in Theorem \ref{thrm:prodcurvescgeneral}.(\ref{thrm:prodcurvescgeneralii}) are better, e.g., when $d=2g-2$. However, for large $g$ (the Mathematica software suggests $g\geq 15$), they are all in the convex span of the classes in \eqref{eq:Vojta} and those in Corollary \ref{cor:genKouvidakis}.
	
	It is conceivable that for some countable
	union of families of curves inside $\scr M_g$ Corollary \ref{cor:genKouvidakis} fails,
	while Theorem \ref{thrm:prodcurvescgeneral} does not. 
\end{rmka}

\subsection{General technical results used in our proofs}

\begin{prop}\label{prop:curveapprox}
  Let $\rho \colon X \to C$ be a flat surjective morphism between projective
  varieties, where $C$ is a nonsingular projective curve over an algebraically closed field.
  Let $\mathcal{L}$ be a line bundle on $X$, and let $f$ be the class of a
  fiber of $\rho$.
  
  \begin{enumerate}[\normalfont(i)]
  	
  	\item\label{prop:curveapproxeasy}
  	If $\cal L$ is nef on every fiber of $\rho$ and globally generated on a general fiber of $\rho$,
  	then
  	\[c_1(\cal L)-\bigl(\overline{\mu}_{\min}(\rho_*\cal L)\bigr)\cdot f\text{ is nef on }X.\]
  	
  	\item\label{prop:curveapproxharder} If $\cal L$ is $\rho$-ample, then
  	\begin{equation}\label{eq:curveapprox}
  		\sup\bigl\{t \bigm\vert c_1(\mathcal{L}) - tf\ \text{is nef}\bigr\}
  		= \lim_{n \to \infty}
  		\frac{\overline{\mu}_{\mathrm{min}}(\rho_*\mathcal{L}^{\otimes n})}{n}.
  	\end{equation}

\end{enumerate}
\end{prop}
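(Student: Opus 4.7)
The plan for (i) is to verify nefness of $c_1(\cal L) - \bar\mu_{\min}(\rho_*\cal L) f$ by checking nonnegative intersection with every irreducible curve $T \subset X$, using Lemma \ref{lem:bp14} to convert positivity of $\rho_*\cal L$ on $C$ into intersection data on $X$. Write $m \coloneqq \bar\mu_{\min}(\rho_*\cal L)$. If $T$ lies in a single fiber $X_c$, then $f \cdot T = 0$ and $c_1(\cal L) \cdot T \geq 0$ by fiberwise nefness, so the inequality is clear. Otherwise $\rho|_T$ is finite surjective; let $\nu\colon\tilde T\to T$ be the normalization and $\tilde\rho\coloneqq\rho\circ\nu\colon\tilde T\to C$ the induced map. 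General-fiber global generation of $\cal L$ makes $\rho^*\rho_*\cal L \to \cal L$ surjective over a dense open of $C$, so the pullback $\tilde\rho^*\rho_*\cal L \to \tilde\nu^*\cal L$ is surjective away from finitely many points of $\tilde T$. Modding out by torsion on the source exhibits $\tilde\nu^*\cal L(-D)$ as a line-bundle quotient of a vector bundle on $\tilde T$ for some effective divisor $D \geq 0$. One then obtains
\[\deg(\tilde\nu^*\cal L) - \deg D \;\geq\; \mu_{\min}\bigl((\tilde\rho^*\rho_*\cal L)_{\mathrm{tf}}\bigr) \;\geq\; \bar\mu_{\min}(\tilde\rho^*\rho_*\cal L) \;=\; (\deg\tilde\rho)\cdot m,\]
using compatibility of $\bar\mu_{\min}$ with finite pullbacks from curves (the reason $\bar\mu_{\min}$ rather than $\mu_{\min}$ is the right invariant, as flagged in the paper). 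This rearranges to $(c_1(\cal L) - mf)\cdot T \geq 0$.

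For (ii), one inequality follows directly from (i) applied to $\cal L^{\otimes n}$, which is $\rho$-very ample, hence fiberwise nef and globally generated, for $n \gg 0$: this gives $\sup\{t\} \geq \bar\mu_{\min}(\rho_*\cal L^{\otimes n})/n$ for each such $n$. To convert this to a statement about the $\lim$ on the right, I would first show the limit exists: the multiplication maps $\rho_*\cal L^{\otimes m} \otimes \rho_*\cal L^{\otimes n} \to \rho_*\cal L^{\otimes(m+n)}$ are surjective for $m,n \gg 0$, and combined with super-multiplicativity of $\bar\mu_{\min}$ under tensor products this yields super-additivity of $\bar\mu_{\min}(\rho_*\cal L^{\otimes n})$ in $n$, so Fekete's lemma gives $\lim_n \bar\mu_{\min}(\rho_*\cal L^{\otimes n})/n = \sup_n \bar\mu_{\min}(\rho_*\cal L^{\otimes n})/n$. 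Hence $\sup\{t\} \geq \lim$.

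For the reverse inequality $\sup\{t\} \leq \lim$, the plan is to exploit the closed immersion $\iota_n\colon X \hookrightarrow Y_n \coloneqq \bb P(\rho_*\cal L^{\otimes n})$ over $C$ with $\iota_n^*\cal O_{Y_n}(1) = \cal L^{\otimes n}$, available for $n \gg 0$ by $\rho$-ampleness. By Lemma \ref{lem:bp14} applied to $Y_n$, for any $t > \bar\mu_{\min}(\rho_*\cal L^{\otimes n})/n$ the class $\xi_n - ntf$ fails to be nef on $Y_n$ and is witnessed by a curve $T_n'\subset Y_n$ with $(\xi_n - ntf)\cdot T_n' < 0$; moreover such $T_n'$ can be selected inside $\bb P(Q_n)$, where $Q_n$ is the minimum-slope semistable quotient from the Harder--Narasimhan filtration of $\rho_*\cal L^{\otimes n}$. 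The main obstacle, and the heart of the proof, is to argue that for some cofinal set of $n$ such a curve can be produced inside $\iota_n(X)$ itself, not merely inside the ambient $Y_n$. A naive dimension count on $\iota_n(X) \cap \bb P(Q_n)$ is typically negative, so the argument will require a finer construction -- likely combining $\rho$-ampleness of $\cal L$ (to control how $\iota_n(X)$ sits inside $Y_n$) with asymptotic considerations in the spirit of the authors' work on Seshadri constants for vector bundles -- to extract from $T_n'$ an actual curve on $X$ whose slope $\cal L \cdot T / f \cdot T$ tends to $\lim_n \bar\mu_{\min}(\rho_*\cal L^{\otimes n})/n$.
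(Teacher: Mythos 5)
Your part (\ref{prop:curveapproxeasy}) is correct but argued differently from the paper. You test against each irreducible curve $T\subset X$ directly: normalize, pull back the generically surjective evaluation $\rho^*\rho_*\cal L\to\cal L$, and compare the degree of the resulting line-bundle quotient with $\mum(\tilde\rho^*\rho_*\cal L)=(\deg\tilde\rho)\cdot\mum(\rho_*\cal L)$. This is in effect a re-proof of the hard direction of Lemma \ref{lem:bp14} restricted to $X$ (Barton/Miyaoka style), and it is sound, including your use of $\mum$ rather than $\mu_{\min}$ to make the finite-pullback identity valid in positive characteristic. The paper instead quotes Lemma \ref{lem:bp14} as a black box and concludes by writing $\cal L\langle-\mum(\rho_*\cal L)f\rangle$ as an extension of nef twisted coherent sheaves (the image of the nef twisted bundle $\rho^*\rho_*\cal L\langle\cdot\rangle$ and a cokernel supported on fibers), invoking the stability of nefness under such extensions from \cite{fm19}. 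Your version is more self-contained; the paper's is shorter given its toolkit. For part (\ref{prop:curveapproxharder}), your deduction of $\sup\geq\lim$ from part (\ref{prop:curveapproxeasy}) and your Fekete/superadditivity argument for the existence of the limit both match the paper in substance.

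The reverse inequality $\sup\{t\}\leq\lim$ is a genuine gap in your proposal, and you acknowledge as much. The strategy of locating a destabilizing curve for $\xi_n-ntf$ inside $\iota_n(X)\subset\bb P(\rho_*\cal L^{\otimes n})$ does not work: the nef-negative locus supplied by Lemma \ref{lem:bp14} lives in $\bb P(Q_n)$ for the minimal-slope HN quotient $Q_n$, and there is no reason for $\iota_n(X)$ to meet $\bb P(Q_n)$ at all, let alone in a curve; no amount of asymptotic refinement of this intersection-theoretic picture is known to repair this. The paper avoids producing curves on $X$ entirely. It first twists to reduce to $\cal L$ ample, then for any rational $u/v$ strictly below $t_0=\sup\{t\}$ it uses that $\cal L^{\otimes v}(-uf)$ is ample, so that by Lemma \ref{lem:CM} (a relative Castelnuovo--Mumford regularity statement) the pushforwards $\rho_*\bigl(\cal L^{\otimes vn}(-unc)\bigr)$ are globally generated, hence nef, for $n\gg 0$. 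By Lemma \ref{lem:bp14} this forces $\mum\bigl(\rho_*\cal L^{\otimes vn}(-unc)\bigr)\geq 0$, i.e., $v\cdot\mum(\rho_*\cal L^{\otimes vn})/(vn)\geq u/n\cdot n/n$, and letting $n\to\infty$ gives $\lim\geq u/v>t_0-\varepsilon$. The key idea you are missing is precisely this conversion: ampleness of the twisted line bundle upstairs implies \emph{global generation} (not merely some intersection-theoretic positivity) of the pushforwards downstairs, and global generation of a vector bundle on a curve immediately bounds $\mum$ from below by $0$.
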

\begin{proof}	
	(\ref{prop:curveapproxeasy}). The assumption implies by cohomology and base change that
	the natural map $\rho^*\rho_*\cal L\to \cal L$ is surjective on the general fiber of $\rho$. We thus have an exact complex
	\[\rho^*\rho_*\cal L\longrightarrow\cal L\longrightarrow Q\longrightarrow 0,\]
	where $Q$ is supported in at most finitely many fibers of $\rho$.
	Since $\cal L$ is nef on the fibers and $Q$ is a direct sum of quotients of $\cal L$ restricted to fibers, we deduce that $Q$ is a nef coherent sheaf. 
	It is invariant under twisting by classes of form $\rho^*D$ with $D$ an $\bb R$-divisor on $C$, since these are trivial on fibers. 
	The twisted sheaf $\rho^*\rho_*\cal L\langle-\rho^*\overline{\mu}_{\min}(\rho_*\cal L)f\rangle$
	is nef by Lemma \ref{lem:bp14}. The same is true of its (twisted) image in $\cal L\langle-\rho^*\overline{\mu}_{\min}(\rho_*\cal L)f\rangle$.
	We deduce that the latter is an extension of nef twisted coherent sheaves.
	\cite[Remark 3.4]{fm19} and \cite[Lemma 3.31]{fm19} prove that such extensions are nef. 
	
	One can also argue by blowing-up the ideal sheaf $\cal I$ on $X$
	such that $\cal I\otimes\cal L$ is the image of the natural map $\rho^*\rho_*\cal L\to\cal L$. 
	
	\medskip
	
	(\ref{prop:curveapproxharder}). We first show that the right-hand side of \eqref{eq:curveapprox} is indeed a
  limit.
  Let $n_0$ be an integer such that $\rho_*\mathcal{L}^{\otimes n}$ is a vector
  bundle for every $n \ge n_0$, and such that the natural maps
  \begin{equation}\label{eq:relativemultmaps}
    \rho_*\mathcal{L}^{\otimes n} \otimes \rho_*\mathcal{L}^{\otimes m}
    \longrightarrow \rho_*\mathcal{L}^{\otimes (n+m)}
  \end{equation}
  are surjective for all $n \ge n_0$, $m \ge n_0$.
  Note that such an $n_0$ exists by cohomology and base change and by
  \cite[Example 1.8.4.(ii)]{laz04}, respectively.
  We then have
  \[
    \overline{\mu}_{\mathrm{min}}(\rho_*\mathcal{L}^{\otimes n}) +
    \overline{\mu}_{\mathrm{min}}(\rho_*\mathcal{L}^{\otimes m})
    = \overline{\mu}_{\mathrm{min}}(\rho_*\mathcal{L}^{\otimes n} \otimes
    \rho_*\mathcal{L}^{\otimes m})
    \le \overline{\mu}_{\mathrm{min}}(\rho_*\mathcal{L}^{\otimes (n+m)})
  \]
  for all $n \ge n_0$, $m \ge n_0$.
  The equality holds by \cite[Corollary 3.7 and p.\ 464]{Miy87}. 
  For the inequality, in characteristic zero use \eqref{eq:relativemultmaps} and \eqref{eq:slope+}.
  In positive characteristic, the same argument works to show the inequality
  above after taking a large enough Frobenius pullback of the quotient map
by \cite[Theorem 2.7]{Langer04}.
  Finally, the sequence
  $\{-\overline{\mu}_{\mathrm{min}}(\rho_*\mathcal{L}^{\otimes
  n})\}_{n=1}^\infty$ is a sequence satisfying the hypothesis of de Bruijn and
  Erd\H{o}s's version of Fekete's lemma \cite[Theorem 23]{deBruijnErdos} for the constant function
  \[
    \varphi(t) = \max\biggl\{0,\max_{1 \le n,m \le n_0}
    \Bigl\{\overline{\mu}_{\mathrm{min}}(\rho_*\mathcal{L}^{\otimes n}) +
    \overline{\mu}_{\mathrm{min}}(\rho_*\mathcal{L}^{\otimes m}) -
    \overline{\mu}_{\mathrm{min}}(\rho_*\mathcal{L}^{\otimes (n+m)})\Bigr\}
    \biggr\}
  \]
  in their notation,
  hence the right-hand side of \eqref{eq:curveapprox} is indeed a limit.
  \par We now show that 
  \[
    \sup\bigl\{t \bigm\vert c_1(\mathcal{L}) - tf\ \text{is nef}\bigr\} \ge
    \frac{\overline{\mu}_{\mathrm{min}}(\rho_*\mathcal{L}^{\otimes n})}{n}
  \]
  for all $n$ sufficiently large.
  By \cite[Theorem 1.7.6.(iii)]{laz04} and the $\rho$-ampleness of
  $\mathcal{L}$, we may assume that $n$ is sufficiently large such that
the natural map
  $\rho^*\rho_*\mathcal{L}^{\otimes n} \to \mathcal{L}^{\otimes
  n}$ is surjective.
  We may also assume that $\rho_*\mathcal{L}^{\otimes n}$ is locally free. 
  Choose a closed point $c \in C$.
  Letting $a\coloneqq -\overline{\mu}_{\mathrm{min}}(\rho_*\mathcal{L}^{\otimes n})$,
  we see that
  \begin{align*}
    \sup\biggl\{t \biggm\vert c_1(\mathcal{L}) + \frac{a}{n}f - tf\ \text{is
    nef}\biggr\} &= \frac{a}{n} + \sup\bigl\{t \bigm\vert c_1(\mathcal{L}) - tf\
    \text{is nef}\bigr\}\\
    \frac{\overline{\mu}_{\mathrm{min}}(\rho_*\mathcal{L}^{\otimes
    n}\langle ac\rangle)}{n} &= \frac{a}{n} +
    \frac{\overline{\mu}_{\mathrm{min}}(\rho_*\mathcal{L}^{\otimes
    n})}{n} = 0
  \end{align*}
  hence it suffices to show that
  \begin{equation}\label{eq:curveapproxineq}
    \sup\biggl\{t \biggm\vert c_1(\mathcal{L}) + \frac{a}{n}f - tf\ \text{is
    nef}\biggr\} \ge 0.
  \end{equation}
  Since
  $\overline{\mu}_{\mathrm{min}}(\rho_*\mathcal{L}^{\otimes n}\langle ac\rangle) = 0$,
  we see that $\rho_*\mathcal{L}^{\otimes n}\langle ac\rangle$ is a nef twisted bundle 
by Lemma \ref{lem:bp14}.
  Using the surjection $\rho^*\rho_*\mathcal{L}^{\otimes n}
  \twoheadrightarrow \mathcal{L}^{\otimes n}$, we have the
  commutative diagram
  \[
    \xymatrix{
      X \ar@{^{(}->}[r]\ar[dr]_{\rho} &
      \mathbb{P}\bigl(\rho_*(\mathcal{L}^{\otimes n})\langle ac\rangle\bigr)
      \ar[d]^{\pi}\\
      & C
    }
  \]
  where $\mathcal{O}_{\mathbb{P}(\rho_*(\mathcal{L}^{\otimes n})\langle ac\rangle)}(1)$ is nef, hence so is $\mathcal{O}(1)\rvert_X =
  \mathcal{L}^{\otimes n}\langle af\rangle$, and
  \eqref{eq:curveapproxineq} follows.
  \par It remains to show that the inequality $\le$ holds in
  \eqref{eq:curveapprox}.
  Choose a closed point $c \in C$, and let $a$ be sufficiently large such
  that $\mathcal{L}\langle af\rangle$ is ample.
  We then see that replacing $\mathcal{L}$ by $\mathcal{L}\langle ac\rangle$
  results in both sides of \eqref{eq:curveapprox} increasing by $a$, and it
  therefore suffices to consider the case when $\mathcal{L}$ is ample.
  Now let $t_0 > 0$ be the value of the supremum on the left-hand side of
  \eqref{eq:curveapprox}, and fix a real number $\varepsilon > 0$.
  Choose integers $u,v\ge 1$ such
  that $u/v + \varepsilon > t_0 > u/v$.
  We then see that
  \[
    \frac{\overline{\mu}_{\mathrm{min}}(\rho_*\mathcal{L}^{\otimes
    vn}(-unc))}{n} = -u + v \cdot
    \frac{\overline{\mu}_{\mathrm{min}}(\rho_*\mathcal{L}^{\otimes n})}{n}.
  \]
  Since $\mathcal{L}^{\otimes v}(-uf)$ is ample, Lemma \ref{lem:CM} below
  implies
  \[
    \lim_{n \to
    \infty}\frac{\overline{\mu}_{\mathrm{min}}(\rho_*\mathcal{L}^{\otimes
    vn}(-unc))}{n} \ge 0
  \]
  by the fact that $\rho_*\mathcal{L}^{\otimes vn}(-unc)$ is nef for $n$
  sufficiently large, and then by Lemma \ref{lem:bp14}.
  We therefore have
  \[
    \lim_{n \to \infty}
    \frac{\overline{\mu}_{\mathrm{min}}(\rho_*\mathcal{L}^{\otimes
    n})}{n} \ge \frac{u}{v} > t_0 - \varepsilon.
  \]
  Since $\varepsilon$ was arbitrary, the inequality $\le$ holds in
  \eqref{eq:curveapprox}.
\end{proof}

\begin{lem}\label{lem:CM}Let $\rho\colon Y\to X$ be a morphism of projective schemes, and let $\cal L$ be an ample invertible sheaf on $Y$.
Let $\cal F$ be a coherent sheaf on $X$.
Then $\cal F\otimes\rho_*\cal L^{\otimes n}$ is ample and globally generated for all $n$ sufficiently large.
\end{lem}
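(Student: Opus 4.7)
The plan is to reduce to the case where $\cal F$ is a line bundle, in which the projection formula is readily available, and then to extract both global generation and ampleness from Castelnuovo--Mumford $0$-regularity, using Serre vanishing on $Y$ together with the Leray spectral sequence.

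Since $X$ is projective, fix a very ample line bundle $\cal O_X(1)$ on $X$ and a surjection $\cal E = \bigoplus_{i=1}^r \cal O_X(D_i) \twoheadrightarrow \cal F$ from a finite direct sum of line bundles. Tensoring yields a surjection $\cal E \otimes \rho_*\cal L^{\otimes n} \twoheadrightarrow \cal F \otimes \rho_*\cal L^{\otimes n}$. Quotients of globally generated coherent sheaves are globally generated; quotients of ample coherent sheaves are ample, via the closed embedding $\bb P(\cal F \otimes \rho_*\cal L^{\otimes n}) \hookrightarrow \bb P(\cal E \otimes \rho_*\cal L^{\otimes n})$ along which the tautological $\cal O(1)$'s correspond. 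It therefore suffices to prove the statement when $\cal F$ is a single line bundle.

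Assume $\cal F$ is a line bundle. For each integer $i \ge 1$, the projection formula gives
\[
\cal F \otimes \rho_*\cal L^{\otimes n} \otimes \cal O_X(-i-1)
\;=\; \rho_*\bigl(\cal L^{\otimes n} \otimes \rho^*\bigl(\cal F \otimes \cal O_X(-i-1)\bigr)\bigr),
\]
and similarly the higher direct images of the right-hand argument equal $\cal F \otimes \cal O_X(-i-1) \otimes R^q\rho_*\cal L^{\otimes n}$. Since $\cal L$ is ample on $Y$, for $n$ sufficiently large the line bundle $\cal L^{\otimes n} \otimes \rho^*(\cal F \otimes \cal O_X(-i-1))$ is ample on $Y$; Serre vanishing gives
\[
H^p\bigl(Y,\; \cal L^{\otimes n} \otimes \rho^*(\cal F \otimes \cal O_X(-i-1))\bigr) \;=\; 0
\qquad (p \ge 1),
\]
while relative Serre vanishing for the $\rho$-ample $\cal L$ gives $R^q\rho_*\cal L^{\otimes n} = 0$ for $q \ge 1$ and $n$ large. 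The Leray spectral sequence therefore collapses to
\[
H^p\bigl(X,\; \cal F \otimes \rho_*\cal L^{\otimes n} \otimes \cal O_X(-i-1)\bigr) \;=\; 0 \qquad (p \ge 1).
\]
As this vanishing is needed only for the finitely many values $i = 1, 2, \ldots, \dim X$, a single $n$ works for all of them, and $\cal F \otimes \rho_*\cal L^{\otimes n} \otimes \cal O_X(-1)$ is then $0$-regular with respect to $\cal O_X(1)$.

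Mumford's regularity theorem then furnishes a surjection $V \otimes \cal O_X \twoheadrightarrow \cal F \otimes \rho_*\cal L^{\otimes n} \otimes \cal O_X(-1)$ for some finite-dimensional $V$; twisting by $\cal O_X(1)$ produces a surjection $\cal O_X(1)^{\oplus \dim V} \twoheadrightarrow \cal F \otimes \rho_*\cal L^{\otimes n}$. Since $\cal O_X(1)^{\oplus \dim V}$ is both ample and globally generated, the same holds for its quotient $\cal F \otimes \rho_*\cal L^{\otimes n}$. The main technical obstacle is the failure of the projection formula for a general coherent $\cal F$, which is sidestepped by the reduction to the line bundle case in the first step.
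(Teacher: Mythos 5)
Your proof is correct and follows essentially the same route as the paper's: reduce to the case of a line bundle via a surjection from a direct sum of line bundles (using that ampleness and global generation pass to quotients), then establish Castelnuovo--Mumford $0$-regularity of the relevant twist by combining the projection formula, the vanishing of $R^q\rho_*\cal L^{\otimes n}$ for large $n$, the Leray spectral sequence, and Serre vanishing on $Y$. The only cosmetic difference is that the paper normalizes the reduction to $\cal F=\cal O_X(-A)$ with $A$ very ample and phrases the conclusion as $(-2)$-regularity of $\rho_*\cal L^{\otimes n}$ itself, while you keep a general line bundle $\cal F$ and prove $0$-regularity of $\cal F\otimes\rho_*\cal L^{\otimes n}\otimes\cal O_X(-1)$; these are the same argument.
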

\begin{proof}
Let $A$ be a very ample divisor on $X$ such that there exists a surjection
$\bigoplus\cal O_X(-A)\twoheadrightarrow\cal F$.
Since ampleness and global generation descend to quotients, it is enough
to prove the lemma for $\cal F=\cal O_X(-A)$.
With the usual arguments of Castelnuovo--Mumford regularity \cite[Theorem 1.8.5]{laz04}, it is enough to prove
that if $A$ is a very ample divisor on $X$, then 
$\rho_*\cal L^{\otimes n}$ is $-2$-regular with respect to $A$,
i.e., $H^i\bigl(X,\rho_*\cal L^{\otimes n}(-(2+i)A)\bigr)=0$ for all $i>0$ for all $n$ sufficiently large. 
This is because in this case $\rho_*\cal L^{\otimes n}(-2A)$ is globally generated, hence $\rho_*\cal L^{\otimes n}(-A)$ is ample and globally generated.

Since $\cal L$ is ample, it is in particular also $\rho$-ample. 
Hence for $n$ large, we have $R^i\rho_*\cal L^{\otimes n}=0$ for all $i>0$. 
The Leray spectral sequence and the projection formula show that 
$H^i\bigl(X,\rho_*\cal L^{\otimes n}(-(2+i)A)\bigr)=H^i\bigl(Y,\cal L^{\otimes n}\otimes\rho^*(-(2+i)A)\bigr)$.
The ampleness of $\cal L$ and Serre vanishing show that these cohomology groups are $0$.
\end{proof}

Finally, we show that Proposition \ref{prop:curveapprox}.(\ref{prop:curveapproxharder}) extends to a more general setting:

\begin{prop}\label{prop:positivitybypushforward}Let $\rho\colon Y\to X$ be a morphism of projective schemes over an algebraically closed field. 
Let $\cal L$ be a $\rho$-ample line bundle on $Y$.
For $\cal F$ a coherent sheaf on $X$, and $H$ an ample line bundle on $X$, denote 
$\nu_H(\cal F)\coloneqq  \sup\{t \mid \cal F\langle -tH\rangle\mbox{ is nef}\}.$
Then
\[\sup\bigl\{t\st c_1(\cal L)-t\rho^*H\mbox{ is nef}\bigr\}=\lim_{n\to\infty}\frac{\nu_H(\rho_*\cal L^{\otimes n})}n.\]
\end{prop}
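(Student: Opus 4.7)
The plan is to transcribe the proof of Proposition \ref{prop:curveapprox}.(\ref{prop:curveapproxharder}), with $\nu_H$ taking over the role of $\mum$; the equality $\nu_H=\mum$ on a curve with $H$ a point is exactly Lemma \ref{lem:bp14}. The tools needed in place of the curve-specific slope bookkeeping are three general properties of nef twisted coherent sheaves, developed in \cite[Remark 3.4]{fm19} and \cite[Lemma 3.31]{fm19} and already invoked in the proof of Proposition \ref{prop:curveapprox}.(\ref{prop:curveapproxeasy}): nefness is preserved under pullback by arbitrary morphisms, under surjective quotients, and under tensor products. These translate into
\[\nu_H(\cal F\otimes\cal G)\geq\nu_H(\cal F)+\nu_H(\cal G)\qquad\text{and}\qquad\nu_H(\cal Q)\geq\nu_H(\cal F)\text{ for }\cal F\twoheadrightarrow\cal Q.\]

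To establish that the right-hand side is a genuine limit, pick $n_0$ such that the multiplication maps $\rho_*\cal L^{\otimes n}\otimes\rho_*\cal L^{\otimes m}\twoheadrightarrow\rho_*\cal L^{\otimes(n+m)}$ are surjective for $n,m\geq n_0$ by the $\rho$-ampleness of $\cal L$ and \cite[Example 1.8.4.(ii)]{laz04}. Combining the two inequalities above gives
\[\nu_H\bigl(\rho_*\cal L^{\otimes(n+m)}\bigr)\geq\nu_H\bigl(\rho_*\cal L^{\otimes n}\bigr)+\nu_H\bigl(\rho_*\cal L^{\otimes m}\bigr),\]
so $-\nu_H(\rho_*\cal L^{\otimes n})$ is eventually subadditive and \cite[Theorem 23]{deBruijnErdos} produces the limit, exactly as in the curve case.

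For the $\geq$ direction, take $n$ large enough that the natural surjection $\rho^*\rho_*\cal L^{\otimes n}\twoheadrightarrow\cal L^{\otimes n}$ is available by $\rho$-ampleness. For any $t<\nu_H(\rho_*\cal L^{\otimes n})/n$, the twisted sheaf $\rho_*\cal L^{\otimes n}\langle -ntH\rangle$ is nef by definition; its pullback is nef, and passing to the quotient $\cal L^{\otimes n}\langle -nt\rho^*H\rangle$ keeps it so, whence $c_1(\cal L)-t\rho^*H$ is nef. Dividing by $n$, letting $n\to\infty$, and letting $t$ approach $\nu_H(\rho_*\cal L^{\otimes n})/n$ yields the bound.

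For the reverse inequality, first replace $\cal L$ by $\cal L\langle a\rho^*H\rangle$ for $a\gg 0$ so that $\cal L$ becomes ample on $Y$; both sides of the desired equality shift by $a$. Writing $t_0$ for the left-hand side and choosing $\varepsilon>0$ together with positive integers $u,v$ with $u/v+\varepsilon>t_0>u/v$, the class $c_1(\cal L)-(u/v)\rho^*H$ is nef by definition of $t_0$, and then ample because $c_1(\cal L)$ is ample (ample plus nef is ample). Scaling by $v$, the line bundle $\cal L^{\otimes v}\langle -u\rho^*H\rangle$ is ample on $Y$, so Lemma \ref{lem:CM} (applied with $\cal F=\cal O_X$) and the projection formula show that
\[\rho_*\cal L^{\otimes vn}\langle -unH\rangle=\rho_*\bigl(\cal L^{\otimes v}\langle -u\rho^*H\rangle\bigr)^{\otimes n}\]
is ample, in particular nef, for all $n\gg 0$, giving $\nu_H(\rho_*\cal L^{\otimes vn})\geq un$. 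Dividing by $vn$, taking $n\to\infty$, and letting $\varepsilon\to 0$ finishes the proof. The main obstacle is the tensor-product step: one must know that nefness of twisted coherent sheaves is preserved under tensor products, which on a curve follows from slope additivity but in general requires the $\bb P$-bundle definitions and the extension machinery of \cite{fm19}.
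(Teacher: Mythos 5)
Your proposal is correct and follows essentially the same route as the paper: the paper's own proof is a terse version of exactly this argument (superadditivity plus de Bruijn--Erd\H{o}s for the limit, the embedding $Y\hookrightarrow\bb P(\rho_*\cal L^{\otimes n})$ for ``$\geq$'', and translation to the ample case followed by Lemma \ref{lem:CM} for ``$\leq$''), deferring the details to the proof of Proposition \ref{prop:curveapprox}.(\ref{prop:curveapproxharder}) just as you transcribe them. You correctly flag the one point the paper leaves implicit, namely that superadditivity of $\nu_H$ rests on tensor products and quotients of nef twisted coherent sheaves being nef, which is supplied by the machinery of \cite{fm19}.
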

\begin{proof}
The sequence $\nu_H(\rho_*\cal L^{\otimes n})>-\infty$ is superadditive (in the
weaker sense in the proof of Proposition \ref{prop:curveapprox}.(\ref{prop:curveapproxharder})) by
$\rho$-ampleness, hence the limit exists by de Bruijn and Erd\H{o}s's version of Fekete's lemma \cite[Theorem 23]{deBruijnErdos}.
Since $\cal L$ is $\rho$-ample, for sufficiently large $n$, we have inclusions $Y\hookrightarrow\bb P(\rho_*\cal L^{\otimes n})$
such that $\cal O_{\bb P_X(\rho_*\cal L^{\otimes n})}(1)|_Y=\cal L^{\otimes n}$.
It follows that the inequality ``$\geq$'' holds.

For the reverse inequality, note as in Proposition \ref{prop:curveapprox}.(\ref{prop:curveapproxharder}) that both sides translate by $t_0$ when replacing 
$\cal L$ by $\cal L\langle t_0\rho^*H\rangle$ for $t_0\in\bb Q$ (with the
understanding that we only consider sufficiently divisible $n$ in the right-hand
side).
Without loss of generality, we may assume that $\cal L$ is ample on $Y$. 
As in Proposition \ref{prop:curveapprox}, we reduce to proving that $\rho_*\cal L^{\otimes n}$ is globally generated for large $n$,
which follows from Lemma \ref{lem:CM}.
\end{proof}

\section{Products of arbitrarily many factors}\label{sec:higherproducts}

In this section we work over $\bb C$.
Let $n\geq 2$ be an integer. It is also interesting to study $\Nef(C^n)$.
To our knowledge, no conjecture on the shape of this cone has been made in the literature for $n\geq 3$.
In fact it is quite a large cone.

\begin{lem}If $g\geq 1$ and $C$ is very general in moduli, then 
$N^1(C^n)$ has dimension $\binom{n+1}{2}$. A basis is given by the class $f_i$ of the fibers of the projection from $C^n$ onto the $i$-th factor for every $i$,
and by the classes $\delta_{ij}$ of the big diagonals $\Delta_{ij}=\{(x_1,\ldots,x_n)\in C^n\st x_i=x_j\}$.
\end{lem}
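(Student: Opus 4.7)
The plan is to combine the K\"unneth decomposition of $H^2(C^n,\bb Q)$ with the fact that, for very general $C$ of genus $g\geq 1$, the Jacobian satisfies $\operatorname{End}({\rm Jac}(C))=\bb Z$. First observe that the proposed basis has the right cardinality: $n+\binom{n}{2}=\binom{n+1}{2}$.

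For the upper bound, I would bound $\dim N^1(C^n)\otimes\bb Q$ by the dimension of rational Hodge $(1,1)$-classes in $H^2(C^n,\bb Q)$, using the Lefschetz $(1,1)$-theorem. K\"unneth gives
\[
H^2(C^n,\bb Q)=\bigoplus_{i=1}^n pr_i^*H^2(C,\bb Q)\,\oplus\bigoplus_{1\leq i<j\leq n}\bigl(pr_i^*H^1(C,\bb Q)\otimes pr_j^*H^1(C,\bb Q)\bigr),
\]
and this decomposition is compatible with Hodge structures. Each summand in the first sum contributes a one-dimensional $(1,1)$-piece, spanned by the pullback of a point class, i.e.\ by $f_i$. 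In each summand of the second sum, the identification $H^1(C,\bb Q)\simeq H^1({\rm Jac}(C),\bb Q)$ together with standard Hodge theory on abelian varieties identifies the rational Hodge $(1,1)$-classes with $\operatorname{End}({\rm Jac}(C))\otimes\bb Q$. For $C$ very general this space has dimension one, so summing over K\"unneth summands yields the desired upper bound $\dim N^1(C^n)\otimes\bb Q\leq n+\binom{n}{2}=\binom{n+1}{2}$.

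For the lower bound, it suffices to check that $f_1,\ldots,f_n$ together with $\{\delta_{ij}\}_{1\leq i<j\leq n}$ are linearly independent in $H^2(C^n,\bb Q)$, which can be read off their K\"unneth components. Writing $\Delta_{ij}=pr_{ij}^{-1}(\Delta)\subset C^n$, the class $\delta_{ij}$ has nonzero component in $pr_i^*H^1(C)\otimes pr_j^*H^1(C)$, corresponding under the identification above to the identity in $\operatorname{End}({\rm Jac}(C))$, while this projection vanishes for all $\delta_{kl}$ with $\{k,l\}\neq\{i,j\}$ and for all $f_k$. Hence in any vanishing linear combination, every coefficient of a $\delta_{ij}$ must be zero; the residual relation among the $f_i$ is also trivial because each $f_k$ lies in a distinct K\"unneth summand of the first type.

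The main obstacle is justifying $\operatorname{End}({\rm Jac}(C))=\bb Z$ for $C$ very general. For $g=1$, this is immediate since the $j$-invariants of CM elliptic curves form a countable subset of $\scr M_1$. For $g\geq 2$, the locus in $\scr M_g$ parametrizing Jacobians with nontrivial endomorphisms is a countable union of proper closed subvarieties, which is classical in the theory of Shimura varieties (going back to Koizumi and Mori). Granted this input, the rest of the proof is formal.
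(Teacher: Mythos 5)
Your proof is correct. The paper actually states this lemma without proof, but the argument it gives for the companion lemma (that $N^1(C\times C_{n-1})$ is $4$-dimensional) reveals the intended route: decompose $\operatorname{Pic}$ of a product as $\operatorname{Pic}$ of the factors plus a correspondence part identified with $\operatorname{Hom}(\operatorname{Alb}(X),\operatorname{Pic}^0(Y))$, and then use $\operatorname{End}(J(C))=\bb Z$ for very general $C$; applied inductively to $C^n$ this gives $n$ fiber classes plus an $\operatorname{End}(J(C))^{\binom n2}$ worth of correspondence classes. Your version is a transcendental repackaging of the same computation: the K\"unneth summands $pr_i^*H^1\otimes pr_j^*H^1$ play the role of the correspondence parts, Lefschetz $(1,1)$ replaces the appeal to the Picard functor, and the identification of Hodge classes in $H^1(C)\otimes H^1(C)$ with $\operatorname{End}(J(C))\otimes\bb Q$ (via the principal polarization) is exactly the same key input. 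Your independence check via the nonvanishing $H^1\otimes H^1$-component of the diagonal is the one point the paper's Albanese phrasing handles implicitly, and it is handled correctly here (this is where $g\geq 1$ enters). The only cosmetic caveat is that your citation of the generic-endomorphism statement is loose, but the fact itself is standard and is also invoked without proof in the paper, so there is no genuine gap.
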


Pulling back by the projections $pr_{ij}\colon C^n\to C^2$ gives nef classes on $C^n$.

\begin{ex}\label{ex:trivialexampleCn}With assumptions as in Theorem \ref{thrm:prodcurvescgeneral}.(\ref{thrm:prodcurvescgenerali}), on general curves we have
\[\frac{(n-1)d}{d-g}f_1+d\:\!f_2+\cdots+d\:\!f_n-\delta_{12}-\cdots-\delta_{1n}=\sum_{i=2}^n\biggl(\Bigl(1+\frac{g}{d-g}\Bigr)f_1+df_i-\delta_{1i}\biggr)\in\Nef(C^n).\]
Indeed, each summand on the right is the pullback via $pr_{1i}$ of a nef class on $C\times C$.
On an arbitrary curve, using \eqref{eq:Vojta}, we obtain
$\sum_{i=2}^n\bigl(\bigl(1+\frac{g}{a-1}+(g-1)(a-1)\bigr)f_1+af_i-\delta_{1i}\bigr)\in\Nef(C^n)$ for all $a>1$.\qed
\end{ex}

Our main result of the section is the following

\begin{thrm}\label{thrm:mainCn}Let $C$ be a smooth complex projective curve of genus $g>0$ and let $n\geq 2$.
Then \[\frac{(n-1)d}{d-g}f_1+d\cdot\sum_{i=2}^nf_i-\sum_{1\leq i<j\leq n}\delta_{ij}\in\Nef(C^n)\]
if one of the following holds:
\begin{enumerate}[\normalfont(i)]
\item\label{thrm:mainCni} $d\geq 2g+n$, or $d\geq \max\{2n+g,2g\}$, or
\item\label{thrm:mainCnii} $n\geq 2g$ and $d\geq g+n-1$.
\end{enumerate}
\end{thrm}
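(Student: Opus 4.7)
The plan is to descend the problem from $C^n$ to $C\times C^{(n-1)}$ via partial symmetrization, and then transport it to the Jacobian $J := \mathrm{Jac}(C)$ by the Abel--Jacobi map, where Kempf's continuous global generation theorem becomes available. Let $\tau\colon C^n\to C\times C^{(n-1)}$ be the finite surjective map $(x_1,\dots,x_n)\mapsto(x_1,\,x_2+\cdots+x_n)$ of degree $(n-1)!$. Since $\tau$ is finite and surjective, a $\bb Q$-divisor class on $C\times C^{(n-1)}$ is nef if and only if its $\tau^*$-pullback on $C^n$ is nef; it therefore suffices to produce a nef $\bb Q$-divisor class $B$ on $C\times C^{(n-1)}$ with $\tau^*B = A$, where $A$ denotes the class in the theorem.

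Computing on natural classes: the first-projection fiber pulls back to $f_1$; the incidence divisor $\mathcal U\subset C\times C^{(n-1)}$ satisfies $\tau^*[\mathcal U]=\sum_{i\geq 2}\delta_{1i}$ (using that $\tau$ is generically \'etale and $\tau^{-1}(\mathcal U)=\bigcup_{i\ge 2}\Delta_{1i}$ set-theoretically); the class $x := [\{p_0\}+C^{(n-2)}]\in N^1(C^{(n-1)})$ satisfies $\tau^*p_2^*x=\sum_{i\ge 2}f_i$; and the branch divisor $\mathcal B_{n-1}\subset C^{(n-1)}$ of $\sigma_{n-1}\colon C^{n-1}\to C^{(n-1)}$ satisfies $\tau^*p_2^*\mathcal B_{n-1}=2\sum_{2\leq i<j\leq n}\delta_{ij}$. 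One verifies directly that
\[
  B \;:=\; \tfrac{(n-1)d}{d-g}\,p_1^*[\mathrm{pt}] \;+\; d\,p_2^*x \;-\; [\mathcal U] \;-\; \tfrac{1}{2}\,p_2^*\mathcal B_{n-1}
\]
satisfies $\tau^*B=A$. To show $B$ is nef, fix a line bundle $L$ of degree $d$ on $C$ and consider the rank-$(n-1)$ tautological bundle $E_L:=p_{2*}(p_1^*L\otimes\mathcal O_{\mathcal U})$ on $C^{(n-1)}$. Using the evaluation sequence $0\to p_{2*}(p_1^*L(-\mathcal U))\to H^0(C,L)\otimes\mathcal O_{C^{(n-1)}}\to E_L\to 0$ (exact once $d\geq 2g+n-2$) together with the tautological surjection $p_2^*E_L\twoheadrightarrow p_1^*L\otimes \mathcal O_{\mathcal U}$, one identifies a positive multiple of $B$ with a polarization on $\bb P(E_L)$ twisted by a class pulled back from $J$ along the Abel--Jacobi map $\alpha\colon C^{(n-1)}\to J$; hence nefness of $B$ reduces to nefness of a suitable $\alpha^*$-twist of $E_L$ on $C^{(n-1)}$.

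The last step is where Kempf's theorem enters: a coherent sheaf $\mathcal F$ on the abelian variety $J$ is $\mathrm{IT}_0$ if $H^i(J,\mathcal F\otimes P)=0$ for all $i>0$ and all $P\in\mathrm{Pic}^0(J)$, and Kempf's theorem then asserts that $\mathcal F$ is continuously globally generated, hence nef. I would push the appropriate twist of $E_L$ forward to $J$ via $\alpha$, and verify the $\mathrm{IT}_0$ condition. Via the Leray spectral sequence for $\alpha$ together with the defining SES of $E_L$, the $\mathrm{IT}_0$ check reduces to cohomological vanishings of the form $H^1(C,L\otimes Q)=0$ and $H^1(C,L(-D')\otimes Q)=0$ for all $Q\in\mathrm{Pic}^0(C)$ and $D'\in C^{(n-1)}$. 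These vanishings hold precisely in the numerical ranges of cases (i) and (ii); case (ii) is significantly easier because $n-1\geq 2g-1$ makes $\alpha$ a $\bb P^{n-1-g}$-bundle, so higher direct images vanish automatically and positivity reduces to fiberwise checks. The main technical obstacle is the uniform verification of $\mathrm{IT}_0$ across $\mathrm{Pic}^0(J)$ in each specified regime, coupled with the Chern-class bookkeeping required to match $B$ numerically with the chosen polarization on $\bb P(E_L)$.
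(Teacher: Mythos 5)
Your descent step is correct and matches the paper: the class on $C\times C_{n-1}$ whose pullback under the symmetrization map is the class $A$ of the theorem is exactly $q^*N_L - Z + \frac{(n-1)d}{d-g}f$ (with $N_L = T_L - \frac{\Delta_{n-1}}{2}$ of class $dx-\frac{\delta}{2}$), and finiteness of the quotient map reduces the theorem to nefness of this class. The gap is in how you propose to prove that nefness. The essential move in the paper --- which your proposal omits entirely --- is to push forward along the \emph{first} projection $p\colon C\times C_{n-1}\to C$ and use the identity $p_*\cal O(q^*N_L - Z)=\bigwedge^{n-1}M_L$ (Lemma \ref{lem:transformSchur}), where $M_L$ is the kernel bundle on $C$. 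Since $M_L$ is semistable for $d\geq 2g$, the bundle $\bigwedge^{n-1}M_L$ is semistable of slope $-\frac{(n-1)d}{d-g}$, and by Lemma \ref{lem:bp14} the twist by $\frac{(n-1)d}{d-g}c_0$ is precisely what makes it nef; Proposition \ref{prop:curveapprox}.(\ref{prop:curveapproxeasy}) then transfers this to $C\times C_{n-1}$ once the relative evaluation map is surjective on a general fiber. This is where the coefficient $\frac{(n-1)d}{d-g}$ actually comes from. Your route keeps everything on $C^{(n-1)}$ (via $E_L$) or on $J$, and neither space can absorb a rational multiple of $p_1^*[\mathrm{pt}]$: that class is pulled back from the curve factor, not from $C^{(n-1)}$ or from $\operatorname{Pic}^0$, so an $\mathrm{IT}_0$/Kempf statement on $J$ --- which only produces numerically trivial twists --- cannot account for it.

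There are further problems with the reduction you sketch. The identification of ``a positive multiple of $B$ with a polarization on $\bb P(E_L)$'' does not work as stated: $\dim\bb P(E_L)=2n-3\neq n=\dim(C\times C^{(n-1)})$ for $n\neq 3$, and the morphism associated to the tautological surjection $q^*E_L\twoheadrightarrow p^*L\otimes\cal O_Z$ is defined only on $Z$, where $\cal O(1)$ pulls back to $p^*L\vert_Z$, not to $q^*N_L-Z$. The paper explicitly flags that the classical approach through $E_L$ on $C_{n-1}$ founders on exactly the point you would need --- controlling positivity of \emph{twists} of $E_L$ on the higher-dimensional base $C_{n-1}$ --- which is why it works with bundles on the curve instead, where Lemma \ref{lem:bp14} reduces positivity of twists to a slope computation. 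Moreover, in case (\ref{thrm:mainCni}) there is no hypothesis forcing $n-1\geq 2g-1$, so the Abel--Jacobi map need not be a projective bundle (or even surjective), and the paper's proof of that case avoids $J$ altogether, using instead the $(n-1)$-very ampleness of $L-c_0$ (with Lemma \ref{lem:KMsteal} supplying the $d\geq\max\{2n+g,2g\}$ range). Kempf's theorem enters only in case (\ref{thrm:mainCnii}), and even there it is used to prove generic surjectivity of a continuous evaluation map whose \emph{sources} are nef because they are pullbacks of the semistable bundles $\bigwedge^{n-1}M_{L+\alpha_i}$ twisted by $\frac{(n-1)d}{d-g}c_0$ --- so the curve-side semistability input is indispensable there as well.
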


\noindent The proof will make use of the rich geometry of the symmetric products of $C$.
We recall some notation and classical results about these. 
Denote by
\[C_n\coloneqq\operatorname{Hilb}^nC=C^n/\mathfrak S_n\] 
the $n$-th symmetric product of $C$ with quotient map $\pi\colon C^n\to C_n$. 
It is also the space of effective divisors $D$ on $C$ with $\deg D=n$.
Let $\Delta=\Delta_n$ be the big diagonal on $C_n$, the image through $\pi$ of $\Delta_{ij}$ for any $i<j$. 
It is the ramification locus of $\pi$,
hence there exists a (non-effective) Cartier divisor on $C_n$ denoted $\frac{\Delta_n}2$ such that $\pi^*\frac{\Delta_n}2$ is the branching divisor $\sum_{i<j}\Delta_{ij}$ and $2\cdot\frac{\Delta_n}2=\Delta_n$.

Let $x$ and $\delta$ be the classes of $c_0+C_{n-1}$ and of $\Delta_n$ respectively in $N^1(C_n)$. 
If $C$ is very general, then $x$ and $\delta$ are a basis of $N^1(C_n)$. 

The cone $\Nef(C_n)$ has been previously studied by \cite{Pacienza}. 
Our methods do not offer improvements here, since we usually exploit
the existence of a nonconstant map to a curve. 
This is in line with our results above in the case $n=2$.
Instead we focus on $\Nef(C\times C_{n-1})$. 
Any of the nef classes that we construct lift to $\frak S_{n-1}$-symmetric (but not necessarily $\frak S_{n}$-symmetric) nef classes on $C^{n}$.
Consider the diagram
\[
\xymatrix{
Z_{n-1}\ar@{^{(}->}[r]& C\times C_{n-1}\ar[r]^-q\ar[d]_p&C_{n-1}\\ 
 & C & 
}\]
where $p$ and $q$ are the two projections, and $Z=Z_{n-1}$ is the universal family $\{(c,D)\st c\in{\rm Supp}\, D\}$.
Denote by $z$ its class in $N^1(C\times C_{n-1})$. 

\begin{lem}If $n\geq 3$, if $g\geq 1$ and $C$ is very general in moduli, then $N^1(C\times C_{n-1})$ is 4-dimensional, generated by
the fiber $f$ of the first projection $p$, by $q^*x$ and $q^*\delta$, and by $z$. 
\end{lem}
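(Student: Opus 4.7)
The plan is to establish (a) $\dim N^1(C \times C_{n-1})_{\mathbb{Q}} = 4$, and (b) the four given classes are linearly independent.

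For (a), I apply the Künneth decomposition
\[
	H^2(C \times C_{n-1}, \mathbb{Q}) \;=\; H^2(C) \,\oplus\, \bigl(H^1(C) \otimes H^1(C_{n-1})\bigr) \,\oplus\, H^2(C_{n-1})
\]
and compute the algebraic $(1,1)$-part of each summand. For $n \geq 3$ the Abel--Jacobi morphism $\alpha_{n-1}\colon C_{n-1} \to \operatorname{Jac}^{n-1}(C)$ identifies $H^1(C_{n-1}) \cong H^1(\operatorname{Jac}(C)) \cong H^1(C)$ as Hodge structures and yields $\rho(C_{n-1}) = \rho(\operatorname{Jac}(C)) + 1 = 2$, the extra class being $\delta$. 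Combined with $\rho(C) = 1$ and the fact that $\operatorname{End}(\operatorname{Jac}(C)) \otimes \mathbb{Q} = \mathbb{Q}$ for $C$ very general, the middle summand contributes $\dim \operatorname{Hom}_{\mathrm{HS}}(H^1(C), H^1(C_{n-1})) = \dim \operatorname{End}(\operatorname{Jac}(C)) \otimes \mathbb{Q} = 1$. Summing: $\rho(C \times C_{n-1}) = 1 + 2 + 1 = 4$.

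For (b), restriction to a fiber $\{c_0\} \times C_{n-1} \cong C_{n-1}$ of $p$ kills $f$ and carries $q^*x, q^*\delta$ to the independent classes $x, \delta$, so $f, q^*x, q^*\delta$ span a $3$-dimensional subspace. Since all three pullback classes lie in the outer Künneth summands $H^2(C) \oplus H^2(C_{n-1})$, it suffices to show $z$ has nonzero image in the middle summand. Equivalently, the correspondence map
\[
	\phi_z \colon H^1(C) \longrightarrow H^1(C_{n-1}), \qquad \gamma \mapsto q_*\bigl(p^*\gamma \cup z\bigr),
\]
must be nonzero. Via the isomorphism $Z_{n-1} \cong C \times C_{n-2}$, $(c, D) \mapsto (c, D - c)$, the inclusion $Z_{n-1} \hookrightarrow C \times C_{n-1}$ becomes $(c, D') \mapsto (c, c + D')$, so the projection formula gives $\phi_z(\gamma) = \sigma_* \operatorname{pr}_1^* \gamma$ where $\sigma\colon C \times C_{n-2} \to C_{n-1}$ is the Abel sum map. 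Factoring $\sigma$ through the quotient $\pi\colon C^{n-1} \to C_{n-1}$ and using the identity $\pi^*\beta = \sum_{i=1}^{n-1} \operatorname{pr}_i^*\gamma$ for $\beta \in H^1(C_{n-1})$ corresponding to $\gamma \in H^1(C)$ under Abel--Jacobi, a short symmetrization argument yields $\phi_z(\gamma) = \beta$. Hence $\phi_z$ is the Abel--Jacobi identification, which is nonzero.

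The main technical subtlety is the correspondence computation in the last step, which requires tracking the Hodge-theoretic identifications $H^1(C_{n-1}) \cong H^1(\operatorname{Jac}(C)) \cong H^1(C)$ carefully, together with the cohomological degree shifts in the Gysin pushforwards. As a sanity check, the analogous computation for $n = 2$ recovers the familiar fact that the diagonal class on $C \times C$ is not a combination of the two fiber classes $f_1, f_2$.
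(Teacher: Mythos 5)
Your argument is essentially the paper's: both decompose $N^1(C\times C_{n-1})$ into the pullback contributions from the two factors plus a correspondence part identified with $\operatorname{End}(J(C))\otimes\mathbb{Q}=\mathbb{Q}$ for $C$ very general (the paper via $\operatorname{Pic}$ of a product and Albanese universality, you via K\"unneth and Lefschetz $(1,1)$, which amounts to the same thing). Your explicit check that $z$ has nonzero image in the middle summand, by computing the correspondence $\phi_z$ through the identification $Z_{n-1}\cong C\times C_{n-2}$, is a correct supplement to a step the paper leaves implicit.
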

\begin{proof}
Modulo pullbacks from either factor, a divisor on $C\times C_{n-1}$ can be identified with a morphism $C_{n-1}\to J(C)$.
From the universality property of the Albanese variety, this is equivalent to an
element of
$\operatorname{Hom}(\operatorname{Alb}(C_{n-1}),J(C))=\operatorname{End}(J(C))$.
The latter is $\bb Z$ for $C$ very general.
We used $\operatorname{Alb}(C_{n-1})=J(C)$. 
\end{proof}

For $L$ a divisor on $C$, consider the tautological divisors on $C_{n-1}$:
\begin{itemize} 
\item $T_L\coloneqq\pi(pr_i^*L)$, where $pr_i\colon C^{n-1}\to C$ is any of the projections. 
We have $\pi^*T_L=L^{\boxtimes (n-1)}\coloneqq\sum_i pr_i^*L$. 
If $L=c_0$ is a point, then $T_L=\{c_0+D\st D\in C_{n-2}\}=c_0+C_{n-2}\subset C_{n-1}$. 
\item $N_L\coloneqq T_L-\frac{\Delta_{n-1}}2$. 
It is the determinant of the tautological bundle $E_L\coloneqq q_*\cal O_{Z_{n-1}}(p^*L)$ (not to be confused with the bundle $E_L=q_*(p^*\cal O_C(L)\otimes\cal O_{C\times C}(\Delta))$ from \cite{elstable} appearing in the previous section). For example, $K_{C_n}=N_{K_C}$.
\end{itemize}

Part (\ref{lem:transformSchuri}) in the next result is an important computation for the proof of Theorem \ref{thrm:mainCn}.
Part (\ref{lem:transformSchurii}) will not be used, but we find that the diagram used in its proof (taken from \cite{ELgonality}) is instructive. 
See Definition \ref{defn:higherconormal} for the definition of $M^{(m-1)}(L)$.

\begin{lem}\label{lem:transformSchur}Let $C$ be a smooth complex projective curve, and let $n\geq 2$ and $m\geq 1$.
If $L$ is a divisor on $C$, then
\[pr_{1*}\cal O\Bigl(pr_{23\ldots n}^*L^{\boxtimes(n-1)}-m\cdot
\sum_{i=2}^n\Delta_{1i}\Bigr)=\bigotimes^{n-1}M^{(m-1)}(L).\]
Consequently,
\begin{enumerate}[\normalfont(i)]
\item\label{lem:transformSchuri} $p_*\cal O(q^*T_L-mZ)=\Sym^{n-1}M^{(m-1)}(L)$ and 
$p_*\cal O(q^*N_L-mZ)=\bigwedge^{n-1}M^{(m-1)}(L)$.
\item\label{lem:transformSchurii} $p_*\cal O_Z(q^*T_L)=\Sym^{n-2}H^0(C,L)\otimes\cal O_C(L)$
and $p_*\cal O_Z(q^*N_L)=\bigwedge^{n-2}M_L\otimes\cal O_C(L)$.
\end{enumerate}
\end{lem}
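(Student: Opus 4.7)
The plan is to establish the main displayed identity first by induction on $n$, then deduce part (i) from it using the $\mathfrak{S}_{n-1}$-symmetry of the construction, and finally deduce part (ii) using the parametrization $Z \cong C \times C_{n-2}$. The base case $n=2$ of the main identity is essentially the definition of $M^{(m-1)}(L)$, after interchanging the two factors of $C \times C$. For the inductive step, regard $C^n$ as the fiber product $A \times_{C_1} B$, where $A = C_1 \times \cdots \times C_{n-1}$ is the first $n-1$ factors (with projection $pr_1^A$ to $C_1$) and $B = C_1 \times C_n$ (with projection $pr_1^B$). The sheaf decomposes as $\mathcal{L} = p_A^* \mathcal{L}_A \otimes p_B^* \mathcal{L}_B$, where
\[
\mathcal{L}_A = \bigotimes_{i=2}^{n-1}\mathcal{O}\bigl(pr_i^*L - m\Delta_{1i}\bigr), \qquad \mathcal{L}_B = \mathcal{O}\bigl(pr_n^*L - m\Delta_{1n}\bigr).
\]
Since both $pr_1^A$ and $pr_1^B$ are smooth and hence flat, flat base change gives $p_{A*} p_B^* \mathcal{L}_B = (pr_1^A)^* M^{(m-1)}(L)$; the projection formula for $p_A$ then produces $p_{A*}\mathcal{L} = \mathcal{L}_A \otimes (pr_1^A)^* M^{(m-1)}(L)$. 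Pushing down by $pr_1^A$, a second projection formula together with the inductive hypothesis closes the induction.

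For part (i), I use the $\mathfrak{S}_{n-1}$-action on $C^n$ permuting the last $n-1$ factors, which commutes with $pr_1$ and whose quotient is $\psi = \mathrm{id}_C \times \pi \colon C^n \to C \times C_{n-1}$. Since $\psi^*\mathcal{O}(q^*T_L - mZ)$ is exactly the line bundle of the main identity, the $\mathfrak{S}_{n-1}$-action on $pr_{1*}\bigl(\psi^*\mathcal{O}(q^*T_L - mZ)\bigr) = \bigotimes^{n-1}M^{(m-1)}(L)$ is the permutation of tensor factors, and its invariants compute $p_*\mathcal{O}(q^*T_L - mZ) = \Sym^{n-1}M^{(m-1)}(L)$. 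For the $N_L$ formula, observe that $\pi^*\mathcal{O}(-\tfrac{\Delta_{n-1}}{2}) = \mathcal{O}(-\sum_{i<j}\Delta_{ij})$ with its pullback linearization differs from the trivial one by the sign character: a local trivialization through the inverse of the Vandermonde-type section $\prod_{i<j}(x_i-x_j)$ transforms with sign under permutations. Taking invariants with respect to the resulting twisted action extracts the sign-isotypic component of the tensor power, which in characteristic zero is $\bigwedge^{n-1}M^{(m-1)}(L)$.

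For part (ii), I exploit the isomorphism $Z \cong C \times C_{n-2}$ sending $(c, D') \mapsto (c, c+D')$, under which $p|_Z$ becomes $pr_1$ and $q|_Z$ becomes the addition map $\alpha\colon C \times C_{n-2} \to C_{n-1}$. A direct computation of divisor pullbacks yields
\[
\alpha^*T_L = pr_1^*L + pr_{C_{n-2}}^*T_L^{(n-2)}, \qquad \alpha^*\Delta_{n-1} = 2Z_{n-2} + pr_{C_{n-2}}^*\Delta_{n-2},
\]
where the multiplicity $2$ arises from a local discriminant computation at a generic point of $Z_{n-2}$ (the pullback of the local equation $s_1^2 - 4s_2$ equals $(u-v)^2$). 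Consequently $\alpha^*N_L = pr_1^*L + pr_{C_{n-2}}^*N_L^{(n-2)} - Z_{n-2}$. Projection formula for $pr_1$ reduces the $T_L$ case to $\mathcal{O}(L) \otimes H^0(C_{n-2}, T_L^{(n-2)}) = \mathcal{O}(L) \otimes \Sym^{n-2}H^0(C, L)$, and the $N_L$ case to $\mathcal{O}(L) \otimes p_*\mathcal{O}(q^*N_L^{(n-2)} - Z_{n-2})$, which by part (i) applied with $n-1$ in place of $n$ and $m = 1$ equals $\mathcal{O}(L) \otimes \bigwedge^{n-2} M_L$.

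The main obstacle is the sign-character identification in part (i): one must carefully verify that the $\mathfrak{S}_{n-1}$-linearization on $\pi^*\mathcal{O}(-\tfrac{\Delta_{n-1}}{2})$ is genuinely sign-twisted rather than trivial, by tracing through a Vandermonde-type local trivialization. Everything else is routine projection formula, flat base change, and the discriminant multiplicity computation for $\alpha$.
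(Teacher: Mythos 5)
Your proof is correct and follows essentially the same route as the paper: the main identity via the iterated fiber product over the first factor plus flat base change and the projection formula, part (i) via $\mathfrak S_{n-1}$-equivariant descent with the trivial versus sign linearization on $\pi^*\mathcal O(-\tfrac{\Delta_{n-1}}{2})$, and part (ii) via the identification $Z\cong C\times C_{n-2}$ together with the pullback formulas $\sigma^*T_L=p_1^*L+p_2^*T_L$ and $\sigma^*(\tfrac{\Delta_{n-1}}{2})=Z_{n-2}+p_2^*\tfrac{\Delta_{n-2}}{2}$ and an appeal to part (i) in one lower degree. The only cosmetic difference is that where you verify the sign-twisted linearization by hand through a Vandermonde-type trivialization, the paper cites \cite[Proposition 2.3]{Sheridan} for the descent-and-invariants statement.
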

\begin{proof}
We can see $C^n$ as
\[
  \underbrace{(C\times C)\times_C\ldots\times_C(C\times C)}_{n-1\text{ times}},
\]
where the fiber product is always over the first projection. Then $pr_{23\ldots n}^*L^{\boxtimes(n-1)}-m\cdot\sum_{i=2}^n\Delta_{1i}$ is the relative box product pullback of $pr_2^*L-m\Delta$ from each $C\times C$ factor. Since $pr_{1*}\cal O\bigl(pr_2^*L-m\Delta\bigr)=M^{(m-1)}(L)$,
the claim follows by the projection formula. 

For (\ref{lem:transformSchuri}) consider the diagram
\[\xymatrix{
C^n\ar[r]^-{pr_{23\ldots n}}\ar[dr]^{1_C\times\pi}\ar[dd]_{pr_1}&C^{n-1}\ar[dr]^{\pi}& \\
& C\times C_{n-1}\ar[r]_-q\ar[dl]^p&C_{n-1}\\
C& &
}\]
The permutation group $\frak S_{n-1}$
acts naturally on the last $n-1$ components of $C^n$,
and trivially on the first component $C$ and on the quotient $C_{n-1}$. 
Then the maps $1_C\times\pi$, $p$, and $pr_1$ are $\frak S_{n-1}$-equivariant. 
We have 
$pr_{23\ldots n}^*L^{\boxtimes(n-1)}-m\cdot\sum_{i=2}^n\Delta_{1i}=(1_C\times\pi)^*(q^*T_L-mZ)$.
The associated line bundle can be linearized in two natural ways
via the identity and alternating representation.
With these linearizations, it descends to $C\times C_{n-1}$
as $q^*T_L-mZ$ and $q^*N_L-mZ$ respectively. 
Part (\ref{lem:transformSchuri}) follows from \cite[Proposition 2.3]{Sheridan}
since $\Sym^{n-1}M^{(m-1)}(L)$ and $\bigwedge^{n-1}M^{(m-1)}(L)$
are the respective invariants for 
$pr_{1*}\cal O\bigl(pr_{23\ldots n}^*L^{\boxtimes(n-1)}-m\sum_{i=2}^n\Delta_{1i}\bigr)=\bigotimes^{n-1}M^{(m-1)}(L)$
under these actions.
\vskip.25cm
For part (\ref{lem:transformSchurii}), consider the commutative diagram 
\[\xymatrix@C=1pc{
 & C_{n-1} & \\
C\times C_{n-2}\ar[ur]^{\sigma}\ar@{^{(}->}[rr]^{\jmath}\ar[dr]_{p_1}\ar[d]_{p_2} & & C\times C_{n-1}\ar[lu]_q\ar[dl]^p\\
C_{n-2}& C &
}\]
where $\sigma(c,D_{n-2})=c+D_{n-2}$, 
where $p_1$ and $p_2$ are the first and second projection, 
and $\jmath(c,D_{n-2})=(c,c+D_{n-2})$.
The map $\jmath$ can be identified with the inclusion of the universal family $Z$. We compute that 
$\sigma^*(c_0+C_{n-2})=p_1^*(c_0)+p_2^*(c_0+C_{n-3})$
which extends to $\sigma^*T_L=p_1^*L+p_2^*T_L$,
and hence
\[p_*\cal O_Z(q^*T_L)=p_{1*}\cal O(\sigma^*T_L)=H^0(C_{n-2},T_L)\otimes\cal O_C(L)\]
by the projection formula and flat base change for cohomology. 

\par For $n=3$, we observe $\sigma^*(\frac{\Delta}2)=\Delta_{12}=Z_1$, hence
\[p_*\cal O_Z(q^*N_L)=p_{1*}\cal O\bigl(p_2^*L-{\Delta_{12}}\bigr)\otimes\cal O_C(L)=M_{L}\otimes\cal O_C(L).\]
\par For $n>3$ we compute $\sigma^*(\frac{\Delta}2)=Z_{n-2}+p_2^*\frac{\Delta_{n-2}}2$.
Then by part (\ref{lem:transformSchuri}), 
\[p_*\cal O_Z(q^*N_L)=p_{1*}\cal O\left(p_2^*N_L-Z_{n-2}\right)\otimes\cal O_C(L)
= \bigwedge^{n-2} M_L \otimes \cal O_C(L).\qedhere\]
\end{proof}





Starting with a divisor $L=L_d$ of degree $d$ on $C$,
the classical approach for constructing nef divisors on $C_n$
has been to exploit the properties of tautological sheaves $E_L=q_*\cal O_Z(p^*L)$ and the divisors $T_L$ and $N_L$ by working with them on $C_n$ directly (see \cite{ELgonality,Sheridan}).
The semistability of $E_{L_d}$ is known for large $d$ (see \cite{MistrettaStability}).
What we are missing is an understanding of the positivity of twists
of $E_{L}$ as is necessary for Proposition \ref{prop:positivitybypushforward}.
In our approach, where we work with vector bundles on $C$ instead,
the positivity of twists is completely determined by semistability.

\begin{proof}[Proof of Theorem \ref{thrm:mainCn}]
In all cases we prove that
\begin{equation}\label{eq:nefclaimCn}q^*\Bigl(dx-\frac{\delta}2\Bigr)-z+\frac{(n-1)d}{d-g}f\in\Nef(C\times C_{n-1}).
\end{equation}
Via $\pi^*$, it then lifts to 
$\frac{(n-1)d}{d-g}f_1+d\:\!f_2+\cdots+d\:\!f_{n+1}-\sum_{1\leq i<j\leq n}\delta_{ij}\in\Nef(C^{n}).$
\medskip 

Recall that a divisor $D$ on a smooth projective curve $C$ is called $k$-very ample if the evaluation map $H^0(C,D)\to H^0(Y_{k+1},D|_{Y_{k+1}})$ is surjective for all effective divisors $Y_{k+1}$ of degree $k+1$ on $C$.
Equivalently, the tautological bundle $E_D=q_*\cal O_{Z_{k+1}}(p^*D)$ on $C_{k+1}$ is globally generated. 
In particular, if $D$ is $k$-very ample, then $N_D$ is globally generated on $C_{k+1}$. \cite{CG90} prove that if $D$ is furthermore $k+1$-very ample, then $N_D$ is in fact very ample on the same $C_{k+1}$.

It is immediate that if $\deg D\geq 2g+p$, then $D$ is $p$-very ample. 

\medskip

(\ref{thrm:mainCni}). Let $L$ be a divisor of degree $d\geq 2g+n$. Then $L-c_0$ is $n-1$-very ample for all $c_0\in C$, hence 
$N_{L-c_0}=N_L-(c_0+C_{n-2})$ is very ample on $C_{n-1}$. 
By Lemma \ref{lem:transformSchur}, 
\begin{equation*}p_*\cal O(q^*N_L-Z)=\bigwedge^{n-1}M_L.\end{equation*}
Since $M_{L}$ is semistable for $d\geq 2g$ (see \cite[Proposition 3.2]{elstable}), 
so are its tensor powers $\bigotimes^{n-1}M_L$ by \cite[Corollary 6.4.14]{laz042}, 
hence so are any direct summands of these tensor powers, e.g., 
$\bigwedge^{n-1}M_L$.
We conclude that $\bigwedge^{n-1}M_{L}$
is semistable of slope $(n-1)\cdot\mu(M_{L})=-\frac{(n-1)d}{d-g}$.
From Proposition \ref{prop:curveapprox}.(\ref{prop:curveapproxeasy}) we find that $q^*N_L-Z+\frac{(n-1)d}{d-g}\cdot f$ is nef. 
Note that $A=N_L$ has class $dx-\frac{\delta}2$, 
which gives \eqref{eq:nefclaimCn}.
\medskip

Let $L$ be a \emph{general} divisor of degree $d\geq \max\{2n+g,2g\}$.
Then $M_L$ is semistable of slope $-\frac d{d-g}$, in particular $\bigwedge^{n-1}M_L$ is semistable of slope $-\frac{(n-1)d}{d-g}$. It remains to prove that $L-c_0$ is $n-1$-very ample for general $c_0\in C$. This follows from Lemma \ref{lem:KMsteal} below.

\medskip

(\ref{thrm:mainCnii}). 
Let $L=L_d$ be a divisor on $C$ of degree $d$.
Fix $c_0\in C$.
We want to show that $q^*N_L-Z+\frac{(n-1)d}{d-g}p^*c_0$ is nef on $C\times C_{n-1}$. 
We begin with a reminder on the Abel--Jacobi map:
\begin{equation}\label{eq:abeljacobium}
	\begin{aligned}
		u_m\colon C_{m} &\longrightarrow \operatorname{Pic}^0(C)=J(C)\\
		D &\longmapsto \cal O_C\bigl(D-mc_0\bigr)
	\end{aligned}
\end{equation}
It is a projective bundle for $m\geq 2g-1$ \cite[p.\ 309, Proposition
2.1(i)]{acgh}.
Here, $c_0\in C$ is our fixed point.


Let $\theta$ be the principal polarization on $J(C)$ obtained as the image through the Abel--Jacobi map $u_{g-1}$.
Denoting by $\tau_{\alpha}$ the translation by $\alpha\in J(C)$, 
it induces an isomorphism 
\begin{align*}
	\kappa\colon J(C)&\longrightarrow\operatorname{Pic}^0(J(C))\\
	\alpha&\longmapsto \tau_{\alpha}^*\theta-\theta
\end{align*}
whose inverse is $u_1^*\colon\operatorname{Pic}^0(J(C))\to\operatorname{Pic}^0(C)=J(C)$.
We make the following claims:
\medskip 

\texttt{Main claim.} For some large $N>0$ and sufficiently general $\alpha_i\in{\rm Pic}^0(C)$ with $i\in\{1,2,\ldots,N\}$,
	the natural map
	\begin{equation}\label{eq:continuousglobalgeneration}
		\bigoplus_{i=1}^Np^*p_*\cal O(q^*N_{L+\alpha_i}-Z)\otimes \cal O(q^*T_{\alpha_i^{\vee}})\longrightarrow\cal O(q^*N_L-Z)
	\end{equation}
	is surjective along the general fiber of $p$. The map is obtained by summing the compositions
	$p^*p_*\cal O(q^*N_{L+\alpha_i}-Z)\otimes \cal O(q^*T_{\alpha_i^{\vee}})
	\to
	\cal O(q^*N_{L+\alpha_i}-Z)\otimes\cal O(q^*T_{\alpha_i^{\vee}})=\cal O(q^*N_L-Z)$.
\medskip
	
\texttt{Claim 2.} If $\alpha\in J(C)$, then $T_{\alpha}$ is numerically trivial.
	In fact $T_{\alpha}=u^*(\kappa(\alpha))$, where $u=u_{n-1}$. 
(Since $u$ is a projective bundle map, $u^*$ is an isomorphism at the level of numerically trivial divisors. 
Consider the morphism
\begin{align*}
	\imath\colon C&\longrightarrow C_{n-1}\\
	c&\longmapsto c+(n-2)c_0
\end{align*}
which satisfies $u_{n-1}\circ\imath=u_1$.
The pullback $\imath^*\colon{\rm Pic}^0(C_{n-1})\to J(C)$ is an isomorphism with inverse $\alpha\mapsto T_{\alpha}$.
See \cite[\S3.2]{Sheridan}.
The class $u^*\kappa(\alpha)$
is a numerically trivial divisor $\eta$ on $C_{n-1}$ such that 
$\imath^*\eta=\alpha$. The claim follows.)

\texttt{Claim 3.} If $E$ is a divisor of degree $e$ on $C$ with $e\geq 2g-1$, then $R^iu_*\cal O_{C_{n-1}}(N_E)=0$ for $i>0$. (The pullback $u^*\theta$ has class $(g+n-2)x-\frac{\delta}2$, e.g., by \cite[Lemma 2.1]{Pacienza}. By the projection formula, it is enough to prove that if $F$ is a divisor of degree $f\geq -(n-1-g)$ on $C$, then $R^iu_*\cal O_{C_{n-1}}(T_F)=0$ for $i>0$. This is true because the fibers of $u$ are projective spaces $\bb P^{n-1-g}$, and $T_{F}$ restricts as $\cal O_{\bb P^{n-1-g}}(f)$ on them. These line bundles have no higher cohomology in the stated range.)

\texttt{Claim 4.} If $E$ is a divisor of degree $e$ on $C$ with $e\geq 2g-1$, then $h^i(C_{n-1},N_E)=0$ for $i>0$.(We have $K_{C_{n-1}}=N_{K_C}$. Then $N_{E}=N_{K_C}+T_{E-K_C}$. 
The claim follows by Kodaira vanishing.)
\medskip

Let us assume for the moment that the main claim is proved. 
By Lemma \ref{lem:transformSchur}.(\ref{lem:transformSchuri}), we have $p_*\cal O(q^*N_{L+\alpha_i}-Z)=\bigwedge^{n-1}M_{L+\alpha_i}$. This is semistable of slope $-\frac{(n-1)d}{d-g}$ since $d\geq g+n-1\geq 2g$ by \cite[Proposition 3.2]{elstable}, hence $p_*\cal O(q^*N_{L+\alpha_i}-Z)\bigl\langle\frac{(n-1)d}{d-g}c_0\bigr\rangle$ is nef by Lemma \ref{lem:bp14}.
From Claim 2, the line bundles $\cal O(q^*T_{\alpha_i^{\vee}})$ are numerically trivial. The twist of the LHS of \eqref{eq:continuousglobalgeneration} by $p^*\frac{(n-1)d}{d-g}c_0$ is then nef.
The conclusion follows as in the proof of Proposition \ref{prop:curveapprox}.(\ref{prop:curveapproxeasy}).
\medskip

\texttt{The proof of the main claim.}
The surjectivity of
\eqref{eq:continuousglobalgeneration} along the general fiber is implied by surjectivity on the fiber over $c_0$.
Note that $Z|_{\{c_0\}\times C_{n-1}}=c_0+C_{n-2}=T_{c_0}$.
By cohomology and base change (using Claim 4), the fiber map is
\begin{equation}\label{eq:continuousfiberrestriction}\bigoplus_{i=1}^NH^0\bigl(C_{n-1},N_{L-c_0+\alpha_i}\bigr)\otimes T_{\alpha_i^{\vee}}\longrightarrow\cal O_{C_{n-1}}(N_{L-c_{0}}).
	\end{equation}
The natural relative evaluation map $u^*u_*\cal O(N_{L-c_0})\to \cal O(N_{L-c_0})$ is surjective: arguing as in Claim 3, the divisor $L-c_0$ restricts as $\cal O(d-1-(g+n-2))$ on the fibers of $u$, and $d-1-(g+n-2)\geq 0$. Using Claim 2, the surjectivity of \eqref{eq:continuousfiberrestriction} follows after pullback from the surjectivity
of
\[\bigoplus_{i=1}^N H^0\bigl(J(C),u_*\cal O(N_{L-c_0+\alpha_i})\bigr)\otimes\kappa(\alpha_i^{\vee})\longrightarrow u_*\cal O(N_{L-c_0}).\]
From Claim 2 and from the projection formula, this is equivalent to the surjectivity of the map
\[\bigoplus_{i=1}^N H^0\bigl(J(C),u_*\cal O(N_{L-c_0})\otimes \kappa(\alpha_i)\bigr)\otimes\kappa(\alpha_i^{\vee})\longrightarrow u_*\cal O(N_{L-c_0}).\]
In this form, the result is a direct application of \cite[Corollary 2.4]{Pareschi} if we prove that 
\[H^i\bigl(J(C),u_*\cal O(N_{L-c_0})\otimes\kappa(\alpha)\bigr)=0\] for all $i>0$ and all $\alpha\in J(C)$. This is a consequence of Claims 3 and 4, and the Leray spectral sequence. 
\end{proof}

\begin{rmka}
	Using the three parts of Theorem \ref{thrm:prodcurvescgeneral}, the proof of Theorem \ref{thrm:mainCn}.(\ref{thrm:mainCni}) gives slightly better results when $C$ is a general curve.
\end{rmka}

\begin{lem}\label{lem:KMsteal}Let $C$ be a smooth projective curve of genus $g$, and let $0\leq n\leq g$. Let $D$ be a divisor of degree $d$ on $C$ such that $\cal O_C(D)$ is general in ${\rm Pic}^d(C)$. 
	If $d\geq 2n+g+1$, then $D$ is $n$-very ample.
\end{lem}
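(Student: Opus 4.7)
The plan is to reformulate $n$-very ampleness cohomologically and then reduce to a dimension count on the Jacobian. First I would observe that $D$ is $n$-very ample iff for every effective divisor $Y$ of degree $n+1$, the evaluation map $H^0(C,D) \to H^0(Y, D\rvert_Y)$ is surjective; by the long exact sequence of
\[
0 \longrightarrow \cal O_C(D-Y) \longrightarrow \cal O_C(D) \longrightarrow \cal O_Y(D) \longrightarrow 0,
\]
and the vanishing $H^1(\cal O_Y(D)) = 0$ (since $Y$ is $0$-dimensional), this is equivalent to the restriction map $H^1(C, D-Y) \to H^1(C, D)$ being injective. Since $d \geq 2n + g + 1 \geq g + 1$, a general $\cal O_C(D)$ of degree $d$ satisfies $h^1(C, D) = 0$, so it suffices to show that $h^1(C, D - Y) = 0$ for every $Y \in C_{n+1}$.

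Next, by Serre duality, $h^1(C, D-Y) > 0$ iff $K_C - D + Y$ is linearly equivalent to an effective divisor. Set $m \coloneqq 2g - 1 - d + n$, the degree of $K_C - (D-Y)$. If $m < 0$, equivalently $d \geq 2g + n$, every line bundle of degree $d - n - 1$ is non-special and the statement is automatic. Otherwise $0 \leq m \leq g - n - 2$, and the locus $W \subset {\rm Pic}^{d-n-1}(C)$ of special line bundles of that degree is, via $L \mapsto K_C - L$, identified with the image of an Abel--Jacobi map from $C_m$, and therefore has dimension at most $m$.

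The heart of the proof is a dimension count on the image of the sum morphism
\[
\sigma \colon W \times C_{n+1} \longrightarrow {\rm Pic}^d(C), \qquad (L, Y) \longmapsto L \otimes \cal O_C(Y),
\]
whose image is precisely the set of $\cal O_C(D) \in {\rm Pic}^d(C)$ such that $D - Y$ fails to be non-special for some effective $Y$ of degree $n+1$. One computes
\[
\dim \operatorname{im}(\sigma) \leq m + (n + 1) = 2g + 2n - d \leq g - 1,
\]
where the last inequality is precisely the hypothesis $d \geq 2n + g + 1$. Since $\dim {\rm Pic}^d(C) = g$, the image is a proper subvariety, so a general $\cal O_C(D)$ lies outside it, completing the proof. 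The whole argument is light on hard input; the only thing to verify carefully is that the numerical inequality above is strict, and the hypothesis on $d$ has been chosen to make this so.
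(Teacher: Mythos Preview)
Your proof is correct and follows essentially the same approach as the paper: both reduce to the dimension count $\dim(C_{n+1}\times C_m)=(n+1)+(2g+n-1-d)=2g+2n-d\leq g-1$ to show that the locus of $\cal O_C(D)\in\operatorname{Pic}^d(C)$ admitting some $Y\in C_{n+1}$ with $h^1(D-Y)>0$ is a proper subvariety. The only cosmetic difference is that you phrase it via the sum map $W\times C_{n+1}\to\operatorname{Pic}^d(C)$, whereas the paper argues by contradiction using pairs $(Z_{n+1},E)$ with $K_C-D+Z_{n+1}\sim E$; these are the same parameter count.
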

We follow an idea of \cite{KM20} where the case $n=0$ was treated.
\begin{proof}
	If $d\geq 2g+n$, then any $D$ is $n$-very ample by Riemann--Roch or Kodaira vanishing. Assume $d\leq 2g+n-1$ and $D$ is not $n$-very ample.
	Then there exists $Z_{n+1}$ an effective divisor of degree $n+1$ such that $h^1(C,D-Z_{n+1})\neq 0$, in particular $h^0(C,K_C-D+Z_{n+1})\neq 0$.
	Note that $\deg(K_C-D+Z_{n+1})=2g+n-1-d\geq 0$.
	We have $K_C-D+Z_{n+1}\sim E$, for some effective $E$ of degree $2g+n-1-d$. The pairs $(Z_{n+1},E)$ move in an $2g+2n-d$-dimensional family, and $2g+2n-d\leq g-1$. For a general choice of $\cal O_C(D)$
	we have $K_C-D\not\sim E-Z_{n+1}$ for any such pair $(Z_{n+1},E)$, which is a contradiction.
\end{proof}

\section{Other considerations}

\subsection{Product of very general distinct curves}
	If $C_1$ and $C_2$ are smooth projective curves, then ${\rm Pic}(C_1\times C_2)\simeq{\rm Pic}(C_1)\oplus{\rm Pic(C_2)}\oplus{\rm Hom}(J(C_1),J(C_2))$. If the curves are sufficiently general, there exists no nontrivial morphism between their Jacobians. In particular, $N^1(C_1\times C_2)$ is 2-dimensional, generated by the classes $f_1$ and $f_2$ of the fibers of each projection. In this case $\Nef(C_1\times C_2)=\Eff(C_1\times C_2)=\langle f_1,f_2\rangle$. 

\subsection{Restricting from larger ambient spaces}

\begin{rmk}[Restricting from the Jacobian]\label{rmk:delv}
	Let $C$ be an \emph{arbitrary} curve of genus $g\geq 1$.
	Let $J(C)$ be the Jacobian of $C$. 
	The ``canonical part'' of the nef cone of $J(C)\times J(C)$ is essentially known by \cite{delv11}. By restricting these classes to $C\times C$,
	one finds
	\[d\:\!f_1+\biggl(1+\frac{g^2}{d-1}\biggr)f_2-\delta\in\Nef(C\times C)\ (\forall)\ d>1.\]
	See Figure \ref{fig:nefconeArbitraryg10} for a comparison with Remark \ref{rmk:RabindranathVojta}.
\end{rmk}

\begin{rmka}[Restricting from Hilbert schemes of K3 surfaces]
	
	For $(S,H)$ a polarized K3 surface of degree $2t$ and Picard number 1,
	\cite{BM14} compute the Nef cone of ${\rm Hilb}^2S$ in terms of solutions to the Pell equations $x^2-4ty^2=5$ and $x^2-ty^2=1$.
	If $C\subset S$ is a smooth curve, one can pullback nef classes from ${\rm Hilb}^2S$ to $C\times C$. 
	The resulting examples are roughly of form $2\sqrt g(f_1+f_2)-\delta$,
	off by a factor of 2 from Conjecture \ref{conj:prodcurvesintro}.	
\end{rmka}

\subsection{Semistability of
\texorpdfstring{$M^{(m-1)}(L)$}{M\textasciicircum\{(m-1)\}(L)} via positivity on
\texorpdfstring{$C\times C_t$}{C \texttimes\ C\_t}}
	One can use Lemma \ref{lem:transformSchur}.(\ref{lem:transformSchuri}) to reduce the semistability of $M^{(m-1)}(L)$ to the non-effectivity of certain divisors on $C\times C_{t}$.
		If $C$ is a smooth projective curve over $\bb C$, and $\cal E$ is a locally free sheaf on $C$, then $\cal E$ is semistable if, and only if,
	it is cohomologically semistable in the sense that for all $1\leq t$ we have $h^0(C,A^{\vee}\otimes \bigwedge^t\cal E)=0$ for all line bundles $A$ of degree $a$ with $a>t\cdot\mu(\cal E)$. (The idea is that if $\cal F\subset\cal E$ destabilizes $\cal E$ and has rank $t$, then 
	we have an inclusion $A\coloneqq\det\cal F\subset\bigwedge^t\cal E$.)
	
	In particular, using Lemma \ref{lem:transformSchur}.(\ref{lem:transformSchuri}) and the projection formula, $M^{(m-1)}(L)$ is semistable if, and only if,
	$h^0(C\times C_{t}, -p^*A+q^*N_L-mZ_t)=0$ for all $t\geq 1$ and all divisors $A$ on $C$ with $\deg A>t\cdot\mu(M^{(m-1)}(L))$.
	
	Instead of proving the semistability of $M^{(m-1)}(L)$, one might be interested in bounding its positivity, which in characteristic zero is determined by $\mu_{\min}(M^{(m-1)}(L))$. 
	Here it is more natural to work with quotients instead of subsheaves. In terms of quotients, the semistability of $\cal E$ is determined by the vanishing of $h^0(C,B\otimes\bigwedge^t\cal E^{\vee})=h^1(C,\omega_C\otimes B^{\vee}\otimes\bigwedge^t\cal E)$ for all line bundle $B$ of degree $b$ with $b<t\cdot\mu(\cal E)$. 
	
	If $L$ is sufficiently positive so that $R^1p_*\cal O_{C\times C_t}(q^*N_L-mZ)=0$ for all $1\leq t<{\rm rk}\, M^{(m-1)}(L)$ (e.g., when $h^1(C_t,N_{L-mc})=0$ for all $c\in C$; this holds when $\deg L> 2g-2+m$), then we would be looking to understand $h^1(C\times C_t,p^*(K_C-B)+q^*N_L-mZ_t)$ for all divisors $B$ on $C$ of degree $b$ with $b<t\cdot\mu(M^{(m-1)}(L))$. 




\bibliographystyle{amsalpha}
\bibliography{ProductsOfCurves}

\end{document}